  \newcounter{mnote}
  \let\oldmarginpar\marginpar
    \renewcommand\marginpar[1]{\-\oldmarginpar[\raggedleft\footnotesize #1]%
    {\raggedright\footnotesize #1}}
\newtheorem{theorem}{Theorem}[section]
\newtheorem{lemma}[theorem]{Lemma}
\newtheorem{example}[theorem]{Example}
\newtheorem{remark}[theorem]{Remark}
\newtheorem{assumption}{Assumption}[section]
\newcommand{\dd}{\,{\rm d}}
\newcommand{\curl}{\operatorname{curl}}
\renewcommand{\div}{\operatorname{div}}
\DeclareMathOperator*{\rot}{rot}
\numberwithin{equation}{section}
\begin{document}
\title[A robust mixed finite element method]{A robust lower order mixed finite element method
	for a strain gradient elasticity model}
\author{Mingqing Chen}%
\address{School of Mathematical Sciences, and MOE-LSC, Shanghai Jiao Tong University, Shanghai 200240, China}%
 \email{mingqing\_chen@sjtu.edu.cn}%
\author{Jianguo Huang}%
\address{School of Mathematical Sciences, and MOE-LSC, Shanghai Jiao Tong University, Shanghai 200240, China}%
 \email{jghuang@sjtu.edu.cn}%
 \author{Xuehai Huang}%
 \address{School of Mathematics, Shanghai University of Finance and Economics, Shanghai 200433, China}%
 \email{huang.xuehai@sufe.edu.cn}%

\thanks{The work of J. Huang was partially supported by NSFC (Grant No.\ 12071289) and the Fundamental Research Funds for the Central Universities. The work of X. Huang was partially supported by NSFC (Grant Nos.\ 12171300, 12071289), and the Natural Science Foundation of Shanghai (Grant No.\ 21ZR1480500).}

\makeatletter
\@namedef{subjclassname@2020}{\textup{2020} Mathematics Subject Classification}
\makeatother
\subjclass[2020]{
%65F10;   %% Iterative numerical methods for linear systems
% 58J10;   %%  Differential complexes [See also 35Nxx]; elliptic complexes
65N12;   %%  Stability and convergence of numerical methods for boundary value problems involving PDEs;
65N15;   %%  Error bounds for boundary value problems involving PDEs
65N22;   %%  Numerical solution of discretized equations for boundary value problems involving PDEs;
65N30;   %%  Finite element, Rayleigh-Ritz and Galerkin methods for boundary value problems involving PDEs;
%65N55;   %%  Multigrid methods; domain decomposition for boundary value problems involving PDEs;
% 15A69;   %%  Multilinear algebra, tensor calculus
% 15A72;   %%  Vector and tensor algebra, theory of invariants [See also 13A50, 14L24]
}

\begin{abstract}
A robust nonconforming mixed finite element method is developed for a strain gradient elasticity (SGE) model. In two and three dimensional cases, a lower order $C^0$-continuous $H^2$-nonconforming finite element is constructed for the displacement field through enriching the quadratic Lagrange element with bubble functions. This together with the linear Lagrange element is exploited to discretize a mixed formulation of the SGE model. The robust discrete inf-sup condition is established. The sharp and uniform error estimates with respect to both the small size parameter and the Lam\'{e} coefficient are achieved, which is also verified by numerical results.  In addition, the uniform regularity of the SGE model is derived under two reasonable assumptions.
\end{abstract}

\keywords{Strain gradient elasticity model, nonconforming mixed finite element method, regularity, robustness}

\maketitle

% \tableofcontents

\section{Introduction}

Size effect of microstructures at the nanoscale has been observed in many experiments. %see e.g. \cite{LiuHeTang2012copperwires,FleckMullerAshby1994,StolkenEvans1998,McelhaneyVlassakNix1998,PooleAshbyFleck1996}.
In order to account for such phenomenon, higher-order continuum theory has begun to emerge even since the early twentieth century. Unlike the classic continuum theory, the corresponding constitutive relations contain additional parameters to characterize the size of materials, giving rise to various elastic mathematical models (cf. \cite{Cosserat1909,Toupin1962Elastic,MindlinTiersten1962Effects,Koiter1964couplestress,
	Mindlin1964Micro,Aifantis1984microstructural,AltanAifantis1992,RuAifantis1993simple}). Historically, Cosserat brothers propose in their celebrated work \cite{Cosserat1909} a continuum model concerning the effect of couple-stresses. Later on,  Mindlin \cite{Mindlin1964Micro} develops a more general linear elasticity theory, but the associated model contains quite a number of size parameters. To make the balance between effective prediction and computation cost, Aifantis and his collaborators introduce in \cite{AltanAifantis1992,RuAifantis1993simple} only one size parameter to produce a strain gradient elasticity (SGE) model for material deformation at the micro/nano scale. This SGE model is well accepted in engineering areas to deal with elastic-plastic problems.
We refer the reader to the survey papers \cite{AskesAifantis2011Gradient,Castrenze2015SGE} for details along this line. It is worth noting that for a microarchitecture or microstructure, if its separation of scales is not fully valid, such as pantographic \cite{RahaliGiorgio2015,AlibertDella2015}, lattice \cite{KhakaloNiiranen2018Form,YangTimofeev2020Effective} and cellular \cite{KhakaloNiiranen2020} metamaterials, one requires to invoke higher-order theory with many size parameters for mechanical computation. Moreover, homogenization methods are actively used to determine generalized material parameters (cf.  \cite{YvonnetAuffray2020homogenization,HosseiniNiiranen2022}).

%Most of these models have many size parameters and hence restrict their practical applications. On the other hand,
%Thereinto, the strain gradient models are vital components for mechanic.

%\textcolor{blue}{Scale effect of microstructures at the nanoscale has been observed in many experiments. %see e.g. \cite{LiuHeTang2012copperwires,FleckMullerAshby1994,StolkenEvans1998,McelhaneyVlassakNix1998,PooleAshbyFleck1996}.
%In order to account for such phenomenon, high-order continuum theory has begun to emerge even since the early of twenty century. Unlike the classic continuum theory, the corresponding constitutive relations contain additional parameters to characterize the size of materials, giving rise to various elastic mathematical models (cf. \cite{Cosserat1909,Toupin1962Elastic,MindlinTiersten1962Effects,Koiter1964couplestress,
%	Mindlin1964Micro,Aifantis1984microstructural}). Most of these models have many size parameters and hence restrict their practical applications. On the other hand, Aifantis and his collaborators only introduced in \cite{AltanAifantis1992,RuAifantis1993simple} one size parameter to produce a strain gradient elastic (SGE) model which can simulate the size effect very well. So the model is well recognized in engineering applications. We refer the reader to the survey paper \cite{AskesAifantis2011Gradient} for details along this line.and it has been widely used in theory of elastic/elastic-plasticity(\cite{AskesAifantis2011Gradient}) at micro/nano scale since it only contains one additional parameter.}

Let $\Omega\subset \mathbb R^d$ ($d=2,\,3$) be a bounded domain with Lipschitz boundary. The SGE model given in \cite{AltanAifantis1992,RuAifantis1993simple} can be described as follows.
\begin{equation}
\begin{cases}\label{eq:straingradproblem}
-\div\big((\bm I-\iota^2\Delta)\bm\sigma(\bm u)\big)=\bm f & \textrm{ in }  \Omega,\\
\bm u=\partial_{\bm n}\bm u=\bm0 & \textrm{ on } \partial\Omega,
\end{cases}
\end{equation}
where $\bm n$ is the unit outward normal to $\partial\Omega$, $\bm u=(u_1, \ldots, u_d)^{\intercal}$ is the displacement field, $\bm f$ is the applied force, $\bm\sigma(\bm u)=2\mu\bm\varepsilon(\bm u)+\lambda(\div\bm u)\bm I$, and $\bm\varepsilon(\bm u)=(\varepsilon_{ij}(\bm u))_{d\times d}$ is the strain tensor field with
$
\varepsilon_{ij}(\bm u)=1/2(\partial_i u_j+\partial_ju_i).
$
Here $\lambda$ and $\mu$ are the Lam\'e constants, and $\bm I$ is the identity tensor field. The resulting stress field is given by $\tilde{\bm\sigma}(\bm u)=(\bm I-\iota^2\Delta)\bm\sigma(\bm u)$, where $\iota\in (0,1)$ denotes the size parameter of the material under discussion.
% The variational formulation of problem \eqref{eq:straingradproblem} is to find $\bm u\in H_0^2(\Omega; \mathbb R^d)$ such that
% $$
% (\bm\sigma(\bm u), \bm\varepsilon(\bm v))+\iota^2(\nabla\bm\sigma(\bm u), \nabla\bm\varepsilon(\bm v))=(\bm f, \bm v) \quad\forall~\bm v\in H_0^2(\Omega; \mathbb R^d).
% $$
The SGE model \eqref{eq:straingradproblem} is a fourth order elliptic singular perturbation problem with small parameter $\iota$, which will reduce to a second-order linear elasticity model \eqref{eq:elasticity} when $\iota=0$. That means, the solution of the reduced problem would not meet one of the boundary conditions of problem~\eqref{eq:straingradproblem}, namely the value of the normal derivative on the boundary. This would lead to the phenomenon of boundary layer.
Moreover, as $\lambda$ goes to infinity, one can see from \eqref{eq:straingradregularity2} that $\|\div\bm u\|_2\to 0$. In other words, with $\lambda$ becoming very large, the elastic material becomes nearly incompressible, and this would deteriorate the approximation accuracy of the lower-order conforming finite element methods and lead to the locking phenomenon. Hence, it is a very challenging issue to develop a robust numerical method with respect to both size parameter $\iota$ and Lam\'e constant $\lambda$.

Finite element method (FEM) is a well-known numerical method for solving elastic problems. For example, many $C^1$-conforming FEMs are used for solving strain gradient elasticity problems in two or three dimensions (cf. \cite{AkarapuZbib2006,Zervos-Papanastasiou-Vardoulakis2001,ZervosPapanicolopulosVardoulakis2009, ManzariYonten2013SGEFEM,PapanicolopulosZervosVardoulakis2009SGEFEM,SongZhaoHe2014SGEFEM,TorabiAnsariDarvizeh2018}). For avoiding higher order shape functions so as to alleviate the computational cost, many non-conforming FEMs are developed accordingly (cf. \cite{SohChen2004SGEnoncom,TorabiAnsariDarvizeh2019SGEnoncon,ZhaoChenLo2011SGEnoncon,LiMingShi2017robust,LiMingShi2018New,LiaoMing2019,LiMingWang2021Korn}).
Another way is to use the mixed method, and in this case the displacement and stress fields can be approximated simultaneously (cf. \cite{AmanatidouAravas2002Mixed,BorstPamin1996,ShuKingFleck1999mixedfem,PhunpengBaiz2015Mixed}).
% There have also developed some finite difference methods, boundary element methods, discontinuous Galerkin methods and meshless methods for the previous problem; see e.g. \cite{AskesAifantis2002,MolariWellsGarikipati2006DG,ZhouZhaoLiu2002FDM}.%TsepouraPapargyribeskouPolyzos2003,

More recently, the isogeometric analysis method has also been used for solving higher-order strain gradient models (cf. \cite{BalobanovNiiranen2018,HosseiniNiiranen2022,KHAKALOLaukkanen2022,MakvandiReiherBertram2018Isogeometric,NiiranenKhakaloBalobanov2016isogeometric}). As far as we know, the method is first proposed by Hughes and his collaborators for solving various engineering problems (see \cite{HughesCottrellBazilevs2005Isogeometric,CottrellHughes2009}), and the systematic error analysis is developed by Beir\~{a}o da Veiga et al. in \cite{BeiraoBuffa2014Mathematical}. We mention that the optimal error estimates are obtained in some of the previous papers \cite{BalobanovNiiranen2018,HosseiniNiiranen2022,NiiranenKhakaloBalobanov2016isogeometric}, but the parameter dependence is not considered theoretically there.%,NguyenNgoNguyen-Xuan2017Isogeometric

%\textcolor{blue}{We refer to \cite{MakvandiReiherBertram2018Isogeometric,NguyenNgoNguyen-Xuan2017Isogeometric,NiiranenKhakaloBalobanov2016isogeometric} for some isogeometric analysis methods.}

 Now, let us review some existing results for error analysis of the SGE model~\eqref{eq:straingradproblem}.  In \cite{LiMingShi2017robust,LiMingShi2018New,LiaoMing2019,LiMingWang2021Korn}, Ming and his collaborators analyze several robust non-conforming FEMs with respect to the size parameter $\iota$ under some specific assumptions, but not robust with respect to the Lam\'{e} constant $\lambda$. To fix this problem, as the Lam\'{e} system for the linear elasticity, they introduce a ``pressure field" $p=\lambda \div \bm u$ to reformulate \eqref{eq:straingradproblem} in the form \cite{LiaoMingXu2021taylorhood}
\begin{equation}\label{straingradLameproblem}
\begin{cases}
-\div\big((\bm I-\iota^2\Delta)(2\mu\bm\varepsilon(\bm u))\big) -\nabla\big((\bm I-\iota^2\Delta)p\big)=\bm f & \textrm{ in } \Omega,\\
\div\bm u -\dfrac{1}{\lambda}p=0 & \textrm{ in } \Omega,\\
\bm u=\partial_{\bm{n}}\bm u=\bm 0 & \textrm{ on } \partial\Omega.
\end{cases}
\end{equation}
And they propose lower order nonconforming finite element methods based on \eqref{straingradLameproblem},  and derived the robust error estimate of $\bm{u}_0-\bm{u}_h$ and $p_0-p_h$ in energy norm, rather than $\bm{u}-\bm{u}_h$ and $p-p_h$, with respect to both Lam\'{e} constant $\lambda$ and size parameter $\iota$ under the assumption $\iota$ is much smaller than the mesh size $h$. Here $(\bm{u}_0, p_0)$ is the solution of problem \eqref{straingradLameproblem} with $\iota=0$, i.e. the Lam\'e system.
It is rather involved to acquire robust error estimate of $\bm{u}-\bm{u}_h$ and $p-p_h$ with respect to both Lam\'{e} constant $\lambda$ and size parameter $\iota$, which motivates this paper. Besides, based on some assumptions of weak continuity, in his PhD thesis \cite{Tian2021NonFEM} supervised by Prof. Jun Hu, Tian constructed a family of robust nonconforming finite element methods with the reduced integration technique for the primal formulation of SGE model \eqref{eq:straingradproblem} in two dimensions.

In this paper we shall devise a robust nonconforming mixed finite element method based on \eqref{straingradLameproblem} with respect to both the size parameter $\iota$ and the Lam\'{e} constant $\lambda$. The well-posedness of problem \eqref{straingradLameproblem} is related to the surjectivity of operator $\div: H_0^1(\Omega;\mathbb R^d)\cap \iota H_0^2(\Omega;\mathbb R^d)\to L_0^2(\Omega)\cap\iota H_0^1(\Omega)$, which is equivalent to the inf-sup condition
\begin{equation}\label{intro:infsup}	
\|q\|_0+\iota|q|_1\lesssim \sup_{\bm{v}\in H_0^2(\Omega;\mathbb R^d)}\frac{(\div\bm v, q)+\iota^2(\nabla\div\bm v, \nabla q)}{|\bm{v}|_1+\iota|\bm{v}|_2}\quad\forall~q\in H_0^1(\Omega)\cap L_0^2(\Omega).
\end{equation}
Here, the symbol $H_0^1(\Omega;\mathbb R^d)\cap \iota H_0^2(\Omega;\mathbb R^d)$ denotes the space $H_0^2(\Omega;\mathbb R^d)$ equipped with norm $|\bm{v}|_{1}+\iota|\bm{v}|_{2}$, while $L_0^2(\Omega)\cap\iota H_0^1(\Omega)$ denotes the space $H_0^1(\Omega)\cap L_0^2(\Omega)$ equipped with norm $\|q\|_{0}+\iota|q|_{1}$.
To develop robust finite element methods for problem~\eqref{straingradLameproblem}, we need to find a pair of finite element spaces $V_h$ and $Q_h$ satisfying the discrete analogue of the inf-sup condition~\eqref{intro:infsup}, where $V_h$ and $Q_h$ are used to discretize $H_0^2(\Omega;\mathbb R^d)$ and $H_0^1(\Omega)\cap L_0^2(\Omega)$, respectively.
The smooth finite element de Rham complexes in \cite{HuLinWu2021,ChenHuang2022b,ChenHuang2022a} will ensure the discrete inf-sup condition, but suffer from large number of degrees of freedom (DoFs) and supersmooth DoFs.
To this end, we construct a lower order $C^0$-continuous $H^2$-nonconforming finite element in two and three dimensions for the displacement field $\bm{u}$, and use the continuous linear element to discretize the pressure field $p$. The DoFs of the $H^2$-nonconforming finite element involve
\begin{equation*}	
(\div\bm v, q)_F  \quad \forall~ q\in\mathbb P_{0}(F), F\in \mathcal F(K),
\end{equation*}
which is vital to prove the robust discrete inf-sup condition with respect to the size parameter $\iota$. With finite element spaces $V_h$ and $Q_h$, we propose a robust nonconforming finite element method for problem \eqref{straingradLameproblem}.
After establishing the discrete inf-sup condition and some interpolation error estimates, we achieve the robust error estimate $O(h^{1/2})$ of $\bm{u}-\bm{u}_h$ and $p-p_h$ with respect to both the size parameter $\iota$ and the Lam\'{e} constant $\lambda$.

Another contribution of this work is building up the regularity of problem~\eqref{eq:straingradproblem}
\begin{equation*}	
|\bm u-\bm u_0|_{1} +\iota\|\bm u\|_2  + \iota^{2}\|\bm u\|_{3}+\lambda\|\div(\bm u-\bm u_0)\|_0 + \lambda\iota|\div\bm u|_1 + \lambda\iota^2\|\div\bm u\|_2 \lesssim \iota^{1/2}\|\bm f\|_0,
\end{equation*}
under two reasonable assumptions. These assumptions are interesting problems in the field of partial differential equations. We prove the first assumption in two dimensions with the aid of the regularity of the triharmonic equation.

The rest of this paper is organized as follows.
In Sect.~\ref{Sect:Notation}, we recall some notations and existing results.
Then the regularity result for SGE model is shown in Sect.~\ref{Sect:regularity}.
In Sect.~\ref{Sect:mixedform}, we construct a lower order $H^2$-nonconforming finite element, and propose a mixed finite element method for SGE model.
In Sect.~\ref{Sect:Error}, we derive the robust error estimates with respect to Lam\'e parameter $\lambda$ and size parameter $\iota$. Finally, numerical examples are provided in Sect.~\ref{Sect:test}.

\section{Preliminaries}\label{Sect:Notation}
\subsection{Notation and some basic inequalities}
Throughout this paper, let $\Omega\subset \mathbb{R}^d$ ($d=2,\,3$) be a convex polytope, which is partitioned into a family of shape regular simplices $\mathcal{T}_h=\{K\}$; $h_K={\rm diam}(K)$ and $h=\max_{K\in \mathcal{T}_h}h_K$. Let $\mathcal V(\mathcal T_h)$ and $\mathcal V^i(\mathcal T_h)$ be the sets of all and interior vertices of $\mathcal T_h$, respectively; denote by $\mathcal{E}(\mathcal T_h)$ and $\mathcal{E}^i(\mathcal T_h)$ (resp. $\mathcal{F}(\mathcal T_h)$ and $\mathcal{F}^i(\mathcal T_h)$) the sets of all and interior one-dimensional edges (resp. $(d-1)$-dimensional faces) of $\mathcal T_h$, respectively. When $d=2$, it is evident that $\mathcal{F}(\mathcal T_h)=\mathcal{E}(\mathcal T_h)$ and $\mathcal{F}^i(\mathcal T_h)=\mathcal{E}^i(\mathcal T_h)$. We next introduce the following macro elements for later requirement. For vertex $\delta\in \mathcal V(\mathcal T_h)$, edge $e\in \mathcal{E}(\mathcal T_h)$ and face $F \in \mathcal{F}(\mathcal T_h)$, write $\omega_{\delta}$, $\omega_{e}$ and $\omega_{F}$  to be respectively the unions of all simplices in $\mathcal{T}_{\delta}$, $\mathcal{T}_{e}$ and $\mathcal{T}_{F}$, where $\mathcal{T}_{\delta}$, $\mathcal{T}_{e}$ and $\mathcal{T}_{F}$ is the set of all simplices in $\mathcal{T}_h$ sharing common vertex $\delta$, edge $e$ and face $F$ respectively. In addition, for a finite set $A$, denote by $\# A$ its cardinality. For all $F \in \mathcal{F}^i(\mathcal T_h)$, which are shared by two simplices $K^+$ and $K^-$ in $\mathcal{T}_{F}$, denote by $\bm n$ the unit outward normal to $K^+$ and define the jump on $F$ as
$
\llbracket\partial_{\bm n}\bm v\rrbracket |_F \coloneqq \partial_{\bm n}\bm v|_{K_+} - \partial_{\bm n}\bm v|_{K_-}.
$
We also write $\llbracket\partial_{\bm n}\bm v\rrbracket |_F \coloneqq \partial_{\bm n}\bm v|_F$ for all $F\in\mathcal F^{\partial}(\mathcal T_h):=\mathcal F(\mathcal T_h)\backslash\mathcal F^i(\mathcal T_h)$.

Given a bounded domain $D\subset \mathbb{R}^{d}$
and integers $m,k\geq0$, denote by $H^m(D)$ the standard Sobolev space on $D$
with norm $\|\cdot\|_{m,D}$ and semi-norm $|\cdot|_{m,D}$, and $H_0^m(D)$ the closure of $C_0^{\infty}(D)$ with respect to $\|\cdot\|_{m,D}$. We abbreviate $\|\cdot\|_{m,\Omega}$ and $|\cdot|_{m,\Omega}$ as $\|\cdot\|_{m}$ and $|\cdot|_{m}$ respectively for $D=\Omega$.
The notation $(\cdot,\,\cdot)_{D}$ symbolizes the standard $L^2$ inner product on $D$, and the subscript will be omitted when $D=\Omega$.
Let $\mathbb{P}_{k}(D)$ be the set of all polynomials on $D$ with the total degree up to $k$. For a natural number $s$, set  $H^m(D; \mathbb R^s) :=H^m(D)\otimes\mathbb R^s$, $H_0^m(D; \mathbb R^s) :=H_0^m(D)\otimes\mathbb R^s$ and $\mathbb{P}_k(D; \mathbb R^s) :=\mathbb{P}_k(D)\otimes\mathbb R^s$. Let $L^2_0(D)$ be the space of functions in $L^2(D)$ with vanishing integral average values.
%   \begin{align*}
% & H^m(D; \mathbb R^s) = \{\bm v=(v_1,v_2,\cdots,v_s)^T| v_i\in H^m(D),1\leq i \leq s \}, \\
% & H^m_0(D; \mathbb R^s) = \{\bm v\in H^m(D; \mathbb R^s)| \bm v = \frac {\partial\bm v}{\partial \bm n} = \cdots = \frac {\partial^{m-1}\bm v}{\partial \bm n^{m-1}}=0 \text{ on } \partial D \}, \\
% & \mathbb{P}_k(D; \mathbb R^s) = \{\bm v= [v_1,v_2,\cdots,v_s]^T| v_i\in \mathbb{P}_k(D),1\leq i \leq s \}, \\
% &L^2_0(D) = \{p\in L^2(D) | \int_{D} p {\rm d}x=0 \}.
% \end{align*}

For a scalar function $w$ and a vector valued function $\bm v = (v_1,v_2)^{\intercal}$ in two dimensions, introduce two usual differential operations
$\curl w = (\partial_2 w, - \partial_1 w)^{\intercal}$ and $\rot \bm v = \partial_1 v_2 - \partial_2 v_1.
$

% In the rest of this subsection, we present some basic inequalities for later theoretical analysis.
As usual, we use $\lesssim$ to represent $\leq C$, where $C$ may be a generic positive constant independent of the mesh size $h$, the material parameter $\iota$ and the Lam\'e coefficient $\lambda$. And $a\eqsim b$ indicates $a\lesssim b\lesssim a$. Assume $\Omega$ is a star-shaped domain \cite{Brenner2008FEMtheory}.
% We have the multiplicative trace inequality in the form \cite[Theorem 1.5.1.10]{Grisvard1985}
% \begin{equation}\label{eq:multiplicativetraceineqty}
% \|v\|_{\partial D}^2 \lesssim \|v\|_{0,D} \|v\|_{1,D} \quad\forall~v\in H^1(D),
% \end{equation}
% which implies readily the scaled trace inequality %\cite[(2.18)]{BrennerSung2018}:
% \begin{equation}\label{eq:traceineqty}
% \|v\|_{0,\partial D}\lesssim h_D^{-1/2}\|v\|_{0,D} + h_D^{1/2}|v|_{1,D} \quad\forall~v\in H^1(D).
% \end{equation}
To deal with strain tensors, we need the first Korn's inequality
\cite[Corollary 11.2.25]{Brenner2008FEMtheory}
\begin{equation}\label{eq:korninequality1st}
\|\nabla\bm v\|_{0} \lesssim \|\bm\varepsilon(\bm v)\|_{0} \quad\forall~\bm v\in H^1_0(\Omega;\mathbb{R}^d),
\end{equation}
and
the $H^2$-Korn's inequality \cite[Theorem 1]{LiMingWang2021Korn}
\begin{equation}\label{eq:korninequalityH2}
( 1-1/\sqrt{2})|\bm v|_2^2\leq \|\nabla\bm\varepsilon(\bm v)\|_0^2 \quad\forall~\bm v\in H^1_0(\Omega;\mathbb{R}^d)\cap H^2(\Omega;\mathbb{R}^d).
\end{equation}
% \begin{equation}\label{eq:korninequalityH2}
% ( 1-1/\sqrt{2})\left(|\bm v|_1^2 + |\bm v|_2^2\right)\lesssim \|\bm\varepsilon(\bm v)\|_{0}^2 + \|\nabla\bm\varepsilon(\bm v)\|_0^2 \quad\forall~\bm v\in H^1_0(\Omega;\mathbb{R}^d)\cap H^2(\Omega;\mathbb{R}^d).
% \end{equation}

% The following inequality can be found in \cite[Theorem 2.2]{GiraultRaviart1986}:
% \begin{equation}\label{eq:poincareL2m1}
% \|p\|_0\lesssim \|p\|_{-1}+\|\nabla p\|_{-1}\quad\forall~p\in L^2(\Omega).
% \end{equation}

Recall the continuity of the right inverse of the divergence operator in \cite{CostabelMcIntosh2010}.
\begin{lemma}\label{lm:divrightinverse}
For $p\in H_0^{m}(\Omega)\cap L_0^2(\Omega)$ with non-negative integer $m$, there exists $\bm{v}\in H_0^{m+1}(\Omega;\mathbb R^d)$ such that $\div\bm{v}=p$ and
\begin{equation*}
\|\nabla\bm v\|_{j}\lesssim \|p\|_{j} \quad {\rm  for }\;  j=0,1,\ldots, m.
\end{equation*}
\end{lemma}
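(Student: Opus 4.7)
The plan is to use the classical Bogovskii integral operator construction, since the paper already assumes $\Omega$ is star-shaped. Fix a ball $B\subset\Omega$ such that $\Omega$ is star-shaped with respect to $B$, and pick a cut-off $\omega\in C_c^\infty(B)$ with $\int_\Omega \omega=1$. Define the Bogovskii operator $\mathcal B$ by
\begin{equation*}
\bm v(x)=\mathcal B p(x)=\int_\Omega \bm G(x,y)\,p(y)\,\dd y,
\end{equation*}
where the kernel $\bm G$ is the explicit one built from $\omega$ along the segments joining points of $\Omega$ through $B$ (as in Galdi or Costabel--McIntosh). A direct computation shows $\div\bm v=p$ in $\Omega$ and $\bm v$ extends by zero to a function in $C^\infty$-regular classes on $\mathbb R^d$, so that $\bm v\in H_0^1(\Omega;\mathbb R^d)$ for $p\in C_c^\infty(\Omega)\cap L_0^2(\Omega)$.

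The first step would be to establish the base case $j=0$, i.e.\ $\|\nabla\bm v\|_0\lesssim\|p\|_0$. One can either appeal to Ne\v{c}as' lemma / the surjectivity of $\div:H_0^1(\Omega;\mathbb R^d)\to L_0^2(\Omega)$, or verify directly that each entry of $\nabla\bm v$ is a Calder\'on--Zygmund singular integral of $p$ plus a smoothing remainder; the $L^2$-continuity of such operators then yields the estimate. Density of $C_c^\infty(\Omega)\cap L_0^2(\Omega)$ in $L_0^2(\Omega)$ allows extension to all $p\in L_0^2(\Omega)$.

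For $j\geq 1$, the point is that the same operator $\mathcal B$ also maps $H_0^m(\Omega)\cap L_0^2(\Omega)$ continuously into $H_0^{m+1}(\Omega;\mathbb R^d)$. The idea is to commute an arbitrary tangential or tangential-plus-normal derivative $\partial^\alpha$ with $\mathcal B$: by integrating by parts in the kernel and using that $p$ and its derivatives of order $<m$ vanish on $\partial\Omega$, one represents $\partial^\alpha\nabla\bm v$ again as a Calder\'on--Zygmund singular integral applied to $\partial^\alpha p$ (plus lower-order terms already controlled). The $L^2$-boundedness of these singular integrals yields $\|\nabla\bm v\|_j\lesssim\|p\|_j$ for each $j\leq m$ simultaneously, with the \emph{same} $\bm v=\mathcal B p$.

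The main obstacle is the singular-integral analysis underlying the higher-order bounds: one must verify that the kernel $\bm G$ has the Calder\'on--Zygmund structure needed so that each commutator $[\partial^\alpha,\mathcal B]$ produces only bounded or weakly singular remainders, and that the boundary conditions $\partial^\beta p|_{\partial\Omega}=0$ (for $|\beta|<m$) are exactly what is required to integrate by parts without boundary contributions. This is precisely the content of Theorem~2.5 (or its proof) in \cite{CostabelMcIntosh2010}; if a direct approach on a general star-shaped Lipschitz $\Omega$ proves cumbersome, one instead decomposes $\Omega$ via a finite partition into star-shaped pieces, applies $\mathcal B$ on each, glues the local right-inverses together with a partition of unity, and corrects the resulting small divergence mismatch by a finite iteration, the corrections being controlled uniformly in the same norms.
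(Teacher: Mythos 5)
Your proposal is correct and matches the paper's treatment: the paper gives no proof of this lemma, simply recalling it as the known continuity result for the Bogovskii right inverse of the divergence from \cite{CostabelMcIntosh2010}, which is exactly the construction and reference you invoke. The Calder\'on--Zygmund/pseudodifferential analysis you sketch for the higher-order bounds is precisely what that cited work carries out, so nothing further is needed here.
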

% \cref{lm:divrightinverse}

\subsection{Mixed formulation}
%To construct a discrete method for \eqref{eq:straingradproblem} possessing uniform convergence with respected to $\lambda$ and $\iota$, Referring to \cite{LiaoMingXu2021taylorhood},We mimic Lam\'e system by introducing $p=\lambda\div\bm u$
In order to solve the problem \eqref{eq:straingradproblem} with a mixed finite element method, we recall its mixed formulation in \cite{LiaoMingXu2021taylorhood}.
% By introducing an auxiliary variable $p=\lambda\div\bm u$ as for the Lam\'e system, one can reformulate \eqref{eq:straingradproblem} in the form \cite{LiaoMingXu2021taylorhood}
% \begin{equation*}
% \begin{cases}
% -\div\big((I-\iota^2\Delta)(2\mu\bm\varepsilon(\bm u))\big) -\nabla\big((I-\iota^2\Delta)p\big)=\bm f & \textrm{ in } \Omega,\\
% \div\bm u -\dfrac{1}{\lambda}p=0 & \textrm{ in } \Omega,\\
% \bm u=\partial_{\bm{n}}\bm u=\bm 0 & \textrm{ on } \partial\Omega.
% \end{cases}
% \end{equation*}
% Then, by integration by parts, we can deduce that
The variational formulation of \eqref{straingradLameproblem} is to find $\bm u\in V:=H_0^2(\Omega; \mathbb R^d)$ and $p\in Q:=H_0^1(\Omega)\cap L^2_0(\Omega)$ such that
\begin{equation}\label{eq:straingradmixedform}
\begin{cases}
a(\bm u, \bm v)
+b(\bm v, p)=(\bm f, \bm v)&  \forall~\bm v\in V,\\
b(\bm u, q)-c(p,q)=0 & \forall~q\in Q,
\end{cases}
\end{equation}
where
% \[
% a(\bm u, \bm v)=2\mu(\bm\varepsilon(\bm u), \bm\varepsilon(\bm v))_{\iota}, b(\bm v, p)=(\div\bm v, p)_{\iota}, c(p,q)=(p, q)_{\iota}/\lambda,
% \]
\begin{align*}
a(\bm u, \bm v)&:=2\mu(\bm\varepsilon(\bm u), \bm\varepsilon(\bm v))_{\iota}=2\mu\big((\bm\varepsilon(\bm u), \bm\varepsilon(\bm v))+\iota^2(\nabla\bm\varepsilon(\bm u), \nabla\bm\varepsilon(\bm v))\big),	 \\
b(\bm v, p)&:=(\div\bm v, p)_{\iota}=(\div\bm v, p)+\iota^2(\nabla\div\bm v, \nabla p), \\
c(p,q)&:=(p, q)_{\iota}/\lambda=\big((p, q)+\iota^2(\nabla p, \nabla q)\big)/\lambda
\end{align*}
with a weighted $H^1$ inner product $(p, q)_{\iota}:=(p, q)+\iota^2(\nabla p, \nabla q)$.
To make the forthcoming discussion in a compact way, we further introduce the following weighted Sobolev norms over $V$ and $Q$:
\[
\|\bm v\|_V : = \left(|\bm v|_1^2 + \iota ^2 |\bm v|_2^2\right)^{1/2}\quad \forall\; \bm v\in V; \quad \|q\|_Q : = \left(\|q\|_0^2 + \iota ^2 |q|_1^2\right)^{1/2}\quad \forall\; q\in Q.
\]

Now, it is easy to check that the linear forms $a(\cdot,\cdot)$, $b(\cdot,\cdot)$ and $c(\cdot,\cdot)$ are bounded on $V\times V$, $V\times Q$ and $Q\times Q$, respectively. The coercivity of the linear form $a(\cdot,\cdot)$ on $V\times V$ results from Korn's inequalities~\eqref{eq:korninequality1st}-\eqref{eq:korninequalityH2}. And the inf-sup condition
\begin{equation*}	
\|q\|_{Q}\lesssim  \sup\limits_{\bm v \in V}\frac {b(\bm v,q)} {\|\bm v\|_V } \quad \forall~q\in Q
\end{equation*}
holds from Lemma~\ref{lm:divrightinverse}. Applying the Babu{\v{s}}ka-Brezzi theory \cite{BoffiBrezziFortin2013}, we have the stability
% $$
% \|\bm u\|_V+\|p\|_Q \lesssim \sup_{\bm v\in V, q\in Q}\frac{a(\bm u, \bm v)+b(\bm v, p)-b(\bm u, q)}{\|\bm v\|_V+\|q\|_Q} \quad\forall~\bm u\in V, p\in Q,
% $$
% which immediately gives
\begin{equation*}	
\|\bm u\|_V+\|p\|_Q \lesssim \sup_{\bm v\in V,\, q\in Q}\frac{a(\bm u, \bm v)+b(\bm v, p)-b(\bm u, q)+c(p,q)}{\|\bm v\|_V+\|q\|_Q} \quad\forall~\bm u\in V, p\in Q.
\end{equation*}
Therefore, the mixed formulation \eqref{eq:straingradmixedform} is well-posed.
% with the stability bound
% $$
% \|\bm u\|_V+\|p\|_Q \lesssim \|\bm f\|_{V'}=\sup_{\bm v\in V}\frac{(\bm f, \bm v)}{\|\bm v\|_V}.
% $$
% \begin{align}
% % \nonumber to remove numbering (before each equation)
%   a(\bm v,\bm v) \gtrsim \|\bm v\|_{V} \quad \forall \bm v\in V, &\qquad c(q,q)\geq \|q\|_{Q} \quad \forall q\in Q,  \notag \\
%   \|q\|_{Q}\lesssim  &\sup\limits_{\bm v \in V}\frac {b(\bm v,q)} {\|\bm v\|_V } \quad \forall q\in Q. \label{infsupcondition}
% \end{align}

%  By the Babu\v{s}ka-Brezzi theory, problem \eqref{eq:straingradmixedform} is well-posed if and only if
%  \begin{eqnarray}
%  % \nonumber to remove numbering (before each equation)
%    \sqrt{ a( \bm v, \bm v ) } + \sup \limits_{p\in Q} \frac {b(\bm v, q )} {\|q\|_Q}   & \eqsim & \|\bm v\|_V,\quad \bm v\in V, \label{eq:wellposed1}\\
%    \sqrt{ c( q, q ) } + \sup \limits_{v\in V} \frac {b(\bm v, q )} {\|\bm v\|_V}  & \eqsim &  \| q\|_P \label{eq:wellposed2}\quad q\in Q.
%  \end{eqnarray}
%   Clearly we have \eqref{eq:wellposed1} by Lemma \ref{lm:korn}
%   and \eqref{eq:wellposed2} follows from the inf-sup condition \eqref{infsupcondition} immediately.

\section{Some regularity estimates}\label{Sect:regularity}

In order to derive the robust error estimates for the proposed mixed finite element method in the next section, we require to develop a series of regularity estimates for the underlying (system of) partial differential equations. In fact, these results are interesting themselves.

% Assume $\Omega\subset \mathbb{R}^d$ is a convex polygon.
\begin{lemma}\label{lm:fourthorderStokes}
	If $\bm f \in H^{-2}(\Omega; \mathbb{R}^d)$, then the following problem
	\begin{equation}\label{eq:fourthorderStokes}
	\begin{cases}
	\Delta^2\bm\phi + \nabla\Delta p=\bm f & {\rm in }\; \Omega,\\
	\Delta\div\bm\phi= 0 & {\rm in }\; \Omega,\\
	\bm\phi=\partial_{\bm{n}}\bm\phi=\bm 0 & {\rm on }\; \partial\Omega,
	\end{cases}
	\end{equation}
	has a unique solution $(\bm\phi, p)\in H_0^2(\Omega; \mathbb R^d)\times(H_0^1(\Omega)\cap L_0^2(\Omega))$ such that
	\begin{equation}\label{eq:fourthorderStokesregularity0}
	\|\bm\phi\|_2+\|p\|_1\lesssim \|\bm f\|_{-2}.
	\end{equation}
\end{lemma}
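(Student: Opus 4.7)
The plan is to recast \eqref{eq:fourthorderStokes} as a mixed variational problem on $V\times Q$ with $V:=H_0^2(\Omega;\mathbb R^d)$ and $Q:=H_0^1(\Omega)\cap L_0^2(\Omega)$, and then invoke the Babu\v{s}ka--Brezzi theory exactly as in the proof of well-posedness of \eqref{eq:straingradmixedform}. To derive the weak form, I multiply the first equation of \eqref{eq:fourthorderStokes} by $\bm v\in V$ and integrate by parts twice, exploiting $\bm v=\partial_{\bm n}\bm v=\bm 0$ on $\partial\Omega$ (which in particular forces $\div\bm v=0$ on $\partial\Omega$); this produces
\[
(\Delta\bm\phi,\Delta\bm v)+(\nabla p,\nabla\div\bm v)=(\bm f,\bm v)\qquad\forall\,\bm v\in V,
\]
with the right-hand side understood as the $H^{-2}$--$H_0^2$ duality pairing. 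Testing $\Delta\div\bm\phi=0$ against $q\in Q$ and integrating by parts once yields $(\nabla\div\bm\phi,\nabla q)=0$. Setting $a(\bm\phi,\bm v):=(\Delta\bm\phi,\Delta\bm v)$ and $b(\bm v,q):=(\nabla\div\bm v,\nabla q)$, the problem takes the standard saddle-point form
\begin{align*}
a(\bm\phi,\bm v)+b(\bm v,p) &= (\bm f,\bm v) & &\forall\,\bm v\in V,\\
b(\bm\phi,q) &= 0 & &\forall\,q\in Q,
\end{align*}
which I take as the definition of a solution.

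Next I would verify the four Babu\v{s}ka--Brezzi hypotheses. Continuity of $a$ on $V\times V$ and of $b$ on $V\times Q$ is immediate from Cauchy--Schwarz. Coercivity of $a$ on the whole of $V$ follows from the identity $\|\Delta\bm v\|_0=|\bm v|_2$ valid on $H_0^2(\Omega;\mathbb R^d)$ (obtained by two integrations by parts with vanishing boundary traces) combined with the iterated Poincar\'{e} inequality on $H_0^2$, yielding $a(\bm v,\bm v)\gtrsim\|\bm v\|_2^2$. For the inf-sup condition, given $q\in Q$, Lemma~\ref{lm:divrightinverse} applied with $m=1$ furnishes $\bm w\in V$ satisfying $\div\bm w=q$ and $\|\bm w\|_2\lesssim\|q\|_1$; then $b(\bm w,q)=|q|_1^2\eqsim\|q\|_1^2$ via Poincar\'{e} on $H_0^1$, so $b(\bm w,q)/\|\bm w\|_2\gtrsim\|q\|_1$ as required.

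The Babu\v{s}ka--Brezzi theorem \cite{BoffiBrezziFortin2013} then delivers a unique $(\bm\phi,p)\in V\times Q$ together with the bound $\|\bm\phi\|_2+\|p\|_1\lesssim\|\bm f\|_{-2}$, which is \eqref{eq:fourthorderStokesregularity0}. I anticipate no substantive obstacle; the only mildly delicate point is to observe that $\bm v\in V$ automatically forces $\div\bm v\in H_0^1(\Omega)$, which is precisely what is needed to discard the boundary term produced by the second integration by parts of $(\nabla\Delta p,\bm v)$.
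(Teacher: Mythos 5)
Your proposal is correct and follows essentially the same route as the paper: the same mixed weak formulation, coercivity of the second-order form on $H_0^2(\Omega;\mathbb R^d)$, the inf-sup condition via Lemma~\ref{lm:divrightinverse}, and the Babu\v{s}ka--Brezzi theory. The only cosmetic difference is that you write the leading form as $(\Delta\bm\phi,\Delta\bm v)$ while the paper uses $(\nabla^2\bm\phi,\nabla^2\bm\psi)$, and these coincide on $H_0^2(\Omega;\mathbb R^d)$ by the integration-by-parts identity you cite.
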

\begin{proof}
	It is routine that the weak formulation of problem \eqref{eq:fourthorderStokes} is to find $(\bm\phi, p)\in H_0^2(\Omega; \mathbb R^d)\times(H_0^1(\Omega)\cap L_0^2(\Omega))$ such that
	\begin{equation}\label{eq:biharmonicstokes}
	\begin{cases}
	(\nabla^2\bm\phi, \nabla^2\bm\psi)
	+(\nabla\div\bm\psi, \nabla p)=\langle\bm f, \bm\psi\rangle&  \forall~\bm\psi\in H_0^2(\Omega; \mathbb R^d),\\
	(\nabla\div\bm\phi, \nabla q)=0 & \forall~q\in H_0^1(\Omega)\cap L_0^2(\Omega).
	\end{cases}
	\end{equation}
	Clearly, the bilinear form $(\nabla^2\cdot, \nabla^2\cdot)$ is  uniformly coercive over $H_0^2(\Omega; \mathbb R^d)$. On the other hand, thanks to Lemma~\ref{lm:divrightinverse}, for every $q\in H_0^1(\Omega)\cap L_0^2(\Omega)$, there exists a $\bm \chi\in H_0^2(\Omega; \mathbb R^d)$ such that $q=\div\bm\chi$ which admits the estimate $\|\nabla\bm\chi\|_1\lesssim \|q\|_1$. Hence,
\begin{align*}
	\|q\|_1&\lesssim (\nabla q, \nabla q)/\|\nabla q\|_0\lesssim (\nabla\div\bm\chi,\nabla q)/\|\nabla\bm\chi\|_1 \lesssim \sup_{\bm\psi\in H_0^2(\Omega; \mathbb R^d)}\frac{(\nabla\div\bm\psi, \nabla q)}{\|\bm{\psi}\|_2}.
\end{align*}
	Thus, we conclude the desired result from the Babu{\v{s}}ka-Brezzi theory \cite{BoffiBrezziFortin2013}.
\end{proof}

Next, we need to introduce two assumptions on the two auxiliary problems, which are the basis to develop subtle estimates for problem \eqref{eq:straingradproblem}.
\begin{assumption}\label{RegularityAssumption1}
	Assume that $\bm f \in H^{-1}(\Omega; \mathbb{R}^d)$. Let $(\bm\phi, p)\in H_0^2(\Omega; \mathbb R^d)\times(H_0^1(\Omega)\cap L_0^2(\Omega))$ be the solution of problem~\eqref{eq:fourthorderStokes}. Then $\bm{\phi}\in H^3(\Omega; \mathbb R^d)$ and $p\in H^2(\Omega)$ which admit the estimate
	\begin{equation}\label{eq:fourthorderStokesregularity1}
	\|\bm\phi\|_3+\|p\|_2\lesssim \|\bm f\|_{-1}.
	\end{equation}
\end{assumption}

We will prove Assumption~\ref{RegularityAssumption1} holds for a convex polygon in Section~\ref{sec:assumption12d}. Since the solution $\bm{\phi}$ of \eqref{eq:biharmonicstokes} is zero when $\bm{f}\in\nabla L^2(\Omega)$,
by replacing $\bm{f}$ in problem \eqref{eq:fourthorderStokes} with $\bm{f}+\nabla q$, we have from \eqref{eq:fourthorderStokesregularity1} that
\begin{equation}\label{eq:fourthorderStokesregularity2}
\|\bm\phi\|_3\lesssim
\inf_{q\in L^2(\Omega)}\|\bm f+\nabla q\|_{-1}.
\end{equation}

\begin{assumption}\label{RegularityAssumption2}
	Assume that $\bm f\in H^{-1}(\Omega; \mathbb R^d)$. Let $\bm u\in H_0^2(\Omega; \mathbb R^d)$ be the solution of the following problem:
	\begin{equation*}
	\begin{cases}
	\Delta (\mathcal{L} \bm u)=\bm f & {\rm  in }\; \Omega,\\
	\bm u=\partial_{\bm{n}}  \bm u= \bm 0 & {\rm on }\; \partial\Omega,
	\end{cases}
	\end{equation*}
	where $\mathcal{L} \bm u = \mu \Delta \bm u + (\lambda + \mu)\nabla (\div \bm u)$ is the usual Lam\'{e} operator, with $\lambda \in[0,\Lambda]$ and $\mu\in [\mu_0,\mu_1]$ being the Lam\'{e} constants. Then $\bm u\in H^3(\Omega; \mathbb R^d)$ and it admits the estimate
	\[
	\| \bm u\|_3 \lesssim \|\bm f\|_{-1},
	\]
	where the hidden constant may depend on $\Lambda$, $\mu_0$ and $\mu_1$.
\end{assumption}
\begin{remark}
	If $\partial \Omega \in C^{\infty}$, Assumption \ref{RegularityAssumption2} holds in terms of the mathematical theory in \cite{AgmonDouglisNirenberg1964}. However, if $\Omega$ is a polytope domain, as shown in \cite{KozlovMazyaRossmann2001spectral}, one should first figure out the operator pencil of the underlying problem and then discover the spectrum distribution in order to determine under which conditions Assumption \ref{RegularityAssumption2} holds. Such study is rather involved, and we cannot ensure this assumption holds even for convex polygons until now.
\end{remark}

\subsection{Regularity results for triharmonic equations in two dimensions and applications}\label{sec:assumption12d}
In this subsection, in view of the mathematical theory in \cite{Dauge1988elliptic,KozlovMazyaRossmann2001spectral}, we are going to develop the regularity theory for triharmonic equations with homogeneous boundary conditions over convex polygons. As far as we know, this result is new. It has its interest itself and will also be used to show  Assumption~\ref{RegularityAssumption1} holds for a convex polygon in combination with some results in \cite{CostabelMcIntosh2010}.

Before introducing and proving our regularity estimates, let us recall some existing results in \cite{Dauge1988elliptic}. As shown in Fig. \ref{corner1}, let $\Omega$ be a convex polygon with $X$ as a generic vertex which has an interior angle $\alpha\in (0,\pi)$. Let $L$ be a strongly elliptic $2m$-order differential operator defined on $\overline{\Omega}$ with constant coefficients, where $m$ is any given positive integer. Consider the following homogeneous Dirichlet boundary problem %for $L$:
\begin{equation}\label{eq:2m_dirichlet}
\begin{cases}
Lu = f &\text{in } \Omega,\\
u = \partial_{\bm n} u = \cdots = \partial_{\bm n}^{m-1}u =0 &\text{on } \partial \Omega,
\end{cases}
\end{equation}
which induces the following operators for each positive real number $s$:
\begin{align*}
L^{(s)} : H_0^m(\Omega)\cap H^{s+m}(\Omega) &\,\, \rightarrow\,\, H^{s-m}(\Omega) ,\\
u{\,\,\,\,\,\,\,\,\,\,\,\,\,\,\,\,\,\,\,\,\,\,\,} &\,\, \rightarrow\,\, L u.
\end{align*}
A key problem is to determine the regularity property of $L^{(s)}$ for $s>0$.

\begin{figure}[htbp]
	\centering
	\includegraphics[width=4.5cm]{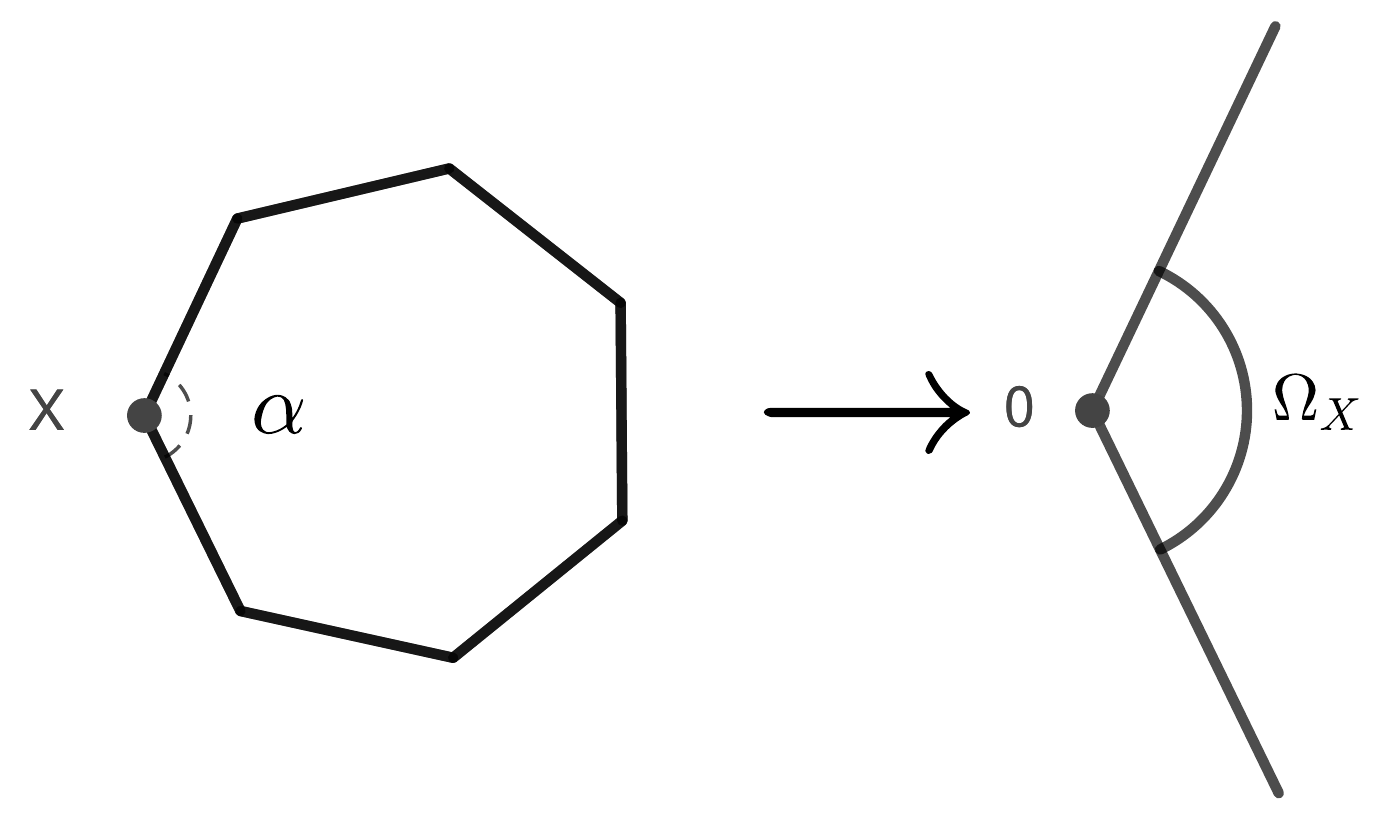}
	\caption{The schema of a polygonal vertex.}
	\label{corner1}
\end{figure}
As a matter of fact, the problem can be answered locally. As shown in Fig. \ref{corner1}, for any given vertex $X$, we might assume the unit circle $S_X$ centered at $X$ has intersection with each of two edges of $\Omega$ which share the vertex $X$, and then write $\Omega_X=\Omega\cap S_X$. Let $L_X$ be the principal part of $L$. We introduce a polar coordinate system $(r,\theta)$ with $X$ as the origin. Then the differential operator $L_X$ can be expressed as
\[
L_X(D_{\bm x}) = r^{-2m}\mathcal{L}_X(\theta;r\partial_r , D_{\theta}),
\]
where $D_{\bm x} = \frac 1 {{\rm i}}\partial_{\bm x}$, $D_{\theta} = \frac 1 {{\rm i}} \partial_{\theta}$. Next, we replace $r\partial_r$ by a complex variable $\lambda\in \mathbb{C}$, so as to get a holomorphic operator $\mathcal{L}_X(\lambda) =\mathcal{L}_X(\theta;\lambda , D_{\theta})$  which acts from $H^m_0(\Omega_X)\cap H^{s+m}(\Omega_X)$ into $H^{s-m}(\Omega_X)$. $\mathcal{L}_X(\lambda)$ is simply written as $\mathcal{L}(\lambda)$ when there is no confusion caused.

As shown in \cite{Dauge1988elliptic}, the angle singularities of solution of elliptic equations have very close relationship with spectrum properties of $\mathcal{L}(\lambda)$. Recall that a point $\lambda_0\in\mathbb{C}$ is said to be regular if the operator $\mathcal{L}(\lambda_0)$ is invertible. The set of all non-regular points is called the spectrum of the operator. Then,  by the well-known Peetre's theorem and the main theorem in \cite[p. 10]{Dauge1988elliptic} we can have a simplified result from \cite{Dauge1988elliptic}, described as follows.
\begin{theorem}[Dauge]\label{thm:Dauge88}
	Let $\Omega$ be a convex polygon and let $L$ be a strongly elliptic $2m$-order differential operator defined on $\overline{\Omega}$ with constant coefficients. Assume that the boundary value problem \eqref{eq:2m_dirichlet} has a unique solution $u\in H^m_0(\Omega)$. Moreover, $s\geq 0$ and $s\not = \{\frac 12, \frac 32,\cdots, m-\frac12\}$. If any $\lambda \in \mathbb{C}$ with ${\rm Re}\lambda\in [m-1,s+m-1]$ is not spectrum of $\mathcal{L}(\lambda)$ and $f\in H^{s-m}(\Omega)$, then $u\in H^{s+m}(\Omega)$, and it holds the estimate
	\begin{equation*}%\label{eq:2m_regularity}
	\|u\|_{s+m}\lesssim \|f\|_{s-m}.
	\end{equation*}
\end{theorem}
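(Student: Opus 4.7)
The plan is to reduce this polygonal regularity estimate to two pieces: (i) standard elliptic regularity away from the vertices and (ii) a Kondratiev–Mellin analysis in a conical neighborhood of each vertex, then glue using a partition of unity and Peetre's theorem as indicated in the statement. Let $\{X_j\}$ be the vertices of $\Omega$, pick a smooth partition of unity $1 = \chi_0 + \sum_j \chi_j$ with $\chi_j$ supported in a small disk around $X_j$ and $\chi_0$ supported away from every vertex, and split $u = \chi_0 u + \sum_j \chi_j u$. The function $\chi_0 u$ satisfies an elliptic equation on a domain whose boundary is $C^\infty$ except possibly along curves disjoint from $\operatorname{supp}\chi_0$, so classical interior/boundary elliptic regularity for strongly elliptic constant-coefficient systems with smooth Dirichlet data of order $m$ gives $\chi_0 u \in H^{s+m}$ with the desired bound, provided $s \not\in \{1/2,3/2,\ldots,m-1/2\}$ (this excludes the trace-critical exponents).

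The main work is the local estimate for $\chi_j u$ near a vertex $X=X_j$ of opening angle $\alpha\in(0,\pi)$. After freezing coefficients at $X$, the principal part $L_X$ expressed in polar coordinates $(r,\theta)$ takes the form $L_X = r^{-2m}\mathcal{L}_X(\theta;\,r\partial_r,D_\theta)$ so the Mellin transform $\widetilde{v}(\lambda,\theta) = \int_0^\infty r^{-\lambda-1} v(r,\theta)\,dr$ converts the model problem on the infinite sector into the holomorphic family $\mathcal{L}(\lambda)\widetilde{v}(\lambda,\cdot) = \widetilde{F}(\lambda,\cdot)$ on the arc $(0,\alpha)$ with Dirichlet conditions of order $m-1$. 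The key analytic fact is the weighted-space Parseval identity: $v\in H^{k}$ corresponds, locally at $X$, to $\widetilde{v}(\lambda,\cdot)$ being square integrable on the vertical line $\operatorname{Re}\lambda = k-1$ with values in a suitable $\theta$-Sobolev space (modulo the excluded half-integer exponents). Thus $u\in H^m$ corresponds to $\operatorname{Re}\lambda = m-1$ and the target $H^{s+m}$ corresponds to $\operatorname{Re}\lambda = s+m-1$.

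The hypothesis that no spectrum of $\mathcal{L}(\lambda)$ lies in the closed strip $m-1 \le \operatorname{Re}\lambda \le s+m-1$ means that the inverse $\mathcal{L}(\lambda)^{-1}$ exists and is uniformly bounded on every vertical line in this strip (uniformity coming from ellipticity at infinity in $\operatorname{Im}\lambda$). I would then shift the contour of the inverse Mellin transform from $\operatorname{Re}\lambda = m-1$ to $\operatorname{Re}\lambda = s+m-1$; since no poles are crossed, the residue theorem yields no singular contribution, and the bounds on $\mathcal{L}(\lambda)^{-1}$ together with the estimate $\|\widetilde{F}(\lambda,\cdot)\|\in L^2(\operatorname{Re}\lambda = s+m-1)$ (which is exactly $\chi_j f \in H^{s-m}$ in the weighted Kondratiev scale) give $\chi_j u\in H^{s+m}$ with a bound by $\|f\|_{s-m}$ plus lower order terms in $u$. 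Summing over $j$ and using the $H^m$-coercivity assumption (existence of a unique $H^m_0$ solution) to absorb the lower order terms via Peetre's theorem produces the global inequality $\|u\|_{s+m}\lesssim \|f\|_{s-m}$.

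The principal obstacle is the bookkeeping around the critical exponents $s \in \{1/2,3/2,\ldots,m-1/2\}$: at these values the Mellin identification between standard Sobolev and weighted Kondratiev spaces breaks down because trace operators land in borderline Besov scales, so the contour shift cannot be performed cleanly; this is precisely why these exponents are excluded. A secondary technical point is the transition from the frozen-coefficient model operator $L_X$ back to the original $L$, which requires that lower-order-in-$r$ perturbations are compact in the Mellin picture — this is standard but must be checked to justify inverting $\mathcal{L}(\lambda)$ for $L$ (not only for $L_X$) throughout the relevant strip.
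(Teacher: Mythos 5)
Your outline is correct and is essentially the argument the paper relies on: the paper gives no proof of this theorem at all, quoting it as a direct consequence of Peetre's lemma and the main theorem of Dauge's monograph, and your scheme (partition of unity isolating the vertices, Mellin transform of the frozen-coefficient model problem, contour shift across a spectrum-free strip from ${\rm Re}\,\lambda=m-1$ to ${\rm Re}\,\lambda=s+m-1$, absorption of the commutator/lower-order terms via uniqueness and Peetre's theorem) is precisely the Kondratiev-type machinery behind that citation. The genuinely delicate points --- the identification of Mellin lines with the unweighted Sobolev scale at the excluded half-integer exponents, and the handling of negative-order data $f\in H^{s-m}$ --- are correctly flagged in your sketch rather than elided, so nothing in the plan would fail.
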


On the other hand, it is rather involved to figure out the closed-form of $\mathcal{L}(\lambda)$ and analyze its spectrum distribution. However, it is very lucky that for the differential operator $L = (-\Delta)^m$, we have from \cite[Chapter 8]{KozlovMazyaRossmann2001spectral} that
\[
\mathcal{L}(\lambda) = (-1)^{m}\prod\limits _{j=0}^{m-1}(\partial_{\theta}^2 + (\lambda - 2j)^2),
\]
and the following result holds.
\begin{lemma}\label{lm:spectrum}
	\cite[Chapter 8]{KozlovMazyaRossmann2001spectral}
	%all interior angles of $\Omega$ locate in $(0,\pi)$
	If $\Omega$ is a convex polygon, and $L = (-\Delta)^m$ for a positive integer $m$, then we have
	\begin{enumerate}
		\item the spectrum of $\mathcal{L}(\lambda)$ does not contain the points $m,\,m+1,\,\cdots\,,2m-1$;
		\item the strip $m-2\leq{\rm Re} \lambda\leq m $ does not contain any eigenvalues of $\mathcal{L}(\lambda)$.
	\end{enumerate}
\end{lemma}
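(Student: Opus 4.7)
The plan is to reduce the operator pencil $\mathcal{L}(\lambda)$ to a constant-coefficient ODE eigenvalue problem on the angular interval $(0,\alpha)$. With the Kondrat'ev ansatz $u(r,\theta)=r^{\lambda}\phi(\theta)$ inserted in $(-\Delta)^m u=0$ on the infinite sector, the polar identity recalled just before the lemma shows that $\mathcal{L}(\lambda)\phi=0$ is a $2m$-th order ODE on $(0,\alpha)$, and the corner Dirichlet conditions translate into the $2m$ endpoint conditions $\phi^{(k)}(0)=\phi^{(k)}(\alpha)=0$ for $k=0,\ldots,m-1$. Thus $\lambda$ lies in the spectrum of $\mathcal{L}(\lambda)$ exactly when this ODE boundary value problem admits a nontrivial solution.

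For part (1), at each candidate $\lambda\in\{m,m+1,\ldots,2m-1\}$ I would write the general solution as
\[
\phi(\theta)=\sum_{j=0}^{m-1}\bigl[A_j\cos((\lambda-2j)\theta)+B_j\sin((\lambda-2j)\theta)\bigr],
\]
replacing the $j$-th pair by $\{1,\theta\}$ whenever $\lambda-2j=0$. Imposing the $2m$ Dirichlet conditions gives a $2m\times 2m$ linear system whose determinant $D_m(\lambda,\alpha)$ is real-analytic in $\alpha$. The goal is to factor out a purely trigonometric prefactor that is nonzero on $(0,\pi)$ and show the remaining algebraic factor retains a definite sign there; the illustrative case $m=2$, $\lambda=2$ collapses to $\sin\alpha\,(\sin\alpha-\alpha\cos\alpha)\neq 0$ on $(0,\pi)$, and an induction on $m$ that pairs the $j$-th and $(m-1-j)$-th modes seems like the cleanest route.

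For part (2), eigenvalues in the strip $m-2\le\mathrm{Re}\,\lambda\le m$ should be excluded by a Green's identity on $(0,\alpha)$. Testing $\mathcal{L}(\lambda)\phi=0$ against $\bar\phi$ and integrating by parts, the homogeneous endpoint conditions leave an expression of the form $\|\phi^{(m)}\|_0^2+P(\lambda,\bar\lambda)\,\|\phi\|_{m-1}^2$, where $P$ is a polynomial with nonnegative real part on the strip; positivity on real $\lambda\in[m-2,m]$ is then delivered by a one-dimensional Poincar\'e-type inequality that is strict when $\alpha<\pi$, and the pencil symmetry $\lambda\mapsto 2(m-1)-\lambda$ (readable from the factorisation) transfers the conclusion from the right half of the strip to the left.

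The main obstacle, in my view, is the determinantal analysis underlying part (1): the factored form of $\mathcal{L}(\lambda)$ is attractive but the coupling of the $m$ angular frequencies to the $2m$ Dirichlet constraints is combinatorially nontrivial, and a clean uniform-in-$m$ proof likely requires the conjugation $\phi\mapsto r^{-(m-1)}\phi$ that symmetrises the pencil about $\lambda=m-1$. Without such a structural reformulation, verifying the sign of $D_m(\lambda,\alpha)$ case by case becomes unmanageable beyond small $m$, and the convexity assumption $\alpha<\pi$ enters only implicitly through trigonometric sign considerations rather than as the sharp geometric hypothesis it really is.
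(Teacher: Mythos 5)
The paper does not prove this lemma at all: it is quoted verbatim from \cite[Chapter 8]{KozlovMazyaRossmann2001spectral}, where the spectral analysis of the pencil generated by the Dirichlet problem for $(-\Delta)^m$ in a plane angle is carried out in full. So there is no in-paper argument to compare against; the question is whether your sketch would stand on its own, and it does not yet.

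Your reduction to the ODE eigenvalue problem on $(0,\alpha)$ with endpoint conditions $\phi^{(k)}(0)=\phi^{(k)}(\alpha)=0$, $k=0,\dots,m-1$, is the right starting point and matches the framework of the cited monograph. But both parts contain genuine gaps. For part (1), the entire content of the statement is the nonvanishing of the $2m\times 2m$ determinant $D_m(\lambda,\alpha)$ at $\lambda=m,\dots,2m-1$ for all $\alpha\in(0,\pi)$, and you only verify the single case $m=2$, $\lambda=2$; the proposed induction ``pairing the $j$-th and $(m-1-j)$-th modes'' is a hope, not an argument, and you concede as much. Note also that several of the candidate values $\lambda=m,\dots,2m-1$ make some $\lambda-2j$ vanish or coincide up to sign, so the fundamental system degenerates and the determinant must be handled separately in those resonant cases — this is precisely where the combinatorics you call ``nontrivial'' lives. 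For part (2), the claim that testing $\mathcal{L}(\lambda)\phi=0$ against $\bar\phi$ yields $\|\phi^{(m)}\|_0^2+P(\lambda,\bar\lambda)\|\phi\|_{m-1}^2$ with $\operatorname{Re}P\ge 0$ on the strip is asserted, not derived: the pencil $\prod_{j=0}^{m-1}(\partial_\theta^2+(\lambda-2j)^2)$ is not self-adjoint for complex $\lambda$, the integration by parts produces cross terms mixing different derivative orders with $\lambda$-dependent coefficients of indefinite sign, and no such clean coercive form is known to come out directly. The actual proof in \cite{KozlovMazyaRossmann2001spectral} locates the eigenvalues as zeros of explicit transcendental functions and establishes the eigenvalue-free strip by a careful study of those zeros as functions of $\alpha$, which is substantially more delicate than a Green's identity. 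As it stands, your proposal is a plausible program rather than a proof, and the two load-bearing steps are exactly the ones left open.
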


%
% If \textcolor{red}{the domain $\Omega$ has neither cut nor hole}, %and its corner $X$, is convex. %(or crack)
%then
%\begin{enumerate}
%  \item The $2m$-order differential operator $L^{(s)}$ is a Fredholm operator if and only if %for any turning point $X$,
%      condition ${\rm \bf{c}_1} $ holds:
%
%${ \rm \bf{c}_1.}$ %For any $\lambda \in \mathbb{C}$ with {\rm Re}$\lambda=s+m-1$, denote by $\{\mathcal{L}^{(s)}_X(\lambda)\}_{\lambda\in\mathbb{C}}$ the corresponding analytic operator family.
%Any $\lambda \in \mathbb{C}$ with {\rm Re}$\lambda=s+m-1$, is not spectrum of $\mathcal{L}(\lambda)$. Also, %if $L^{(s)}$ is injective,
%for any $v\in H^{s+m}(\Omega)\cap H^m_0(\Omega)$, it holds
%\begin{equation*}\label{eq:2m_regularity1}
%\|v\|_{s+m}\lesssim \|Lv\|_{s-m}.
%\end{equation*}
% \item Additionally, %\textcolor{red}{
%Moreover, if $L^{(s)}$ is a Fredholm operator and $f\in H^{s-m}(\Omega)$, then $u\in H^{s+m}(\Omega)$ and
%\begin{equation*}
%\|u\|_{s+m}\lesssim \|f\|_{s-m}.
%\end{equation*}
%}

%In order to analyze the regularity for problem \eqref{eq:straingradproblem}, introduce some lemmas and assumption.

Making use of the above two results, we can derive the regularity theory for the triharmonic equation given below.
\begin{theorem}\label{th:sixthorderregularity}
	Assume $\Omega$ is a convex polygon. Let $w\in H^3_0(\Omega)$ satisfy
	\begin{equation}\label{eq:sixthorder}
	-\Delta^3 w =f \in H^{-2}(\Omega).
	\end{equation}
	Then $w\in H^4(\Omega)$ and it admits the estimate
	\begin{equation*}%\label{eq:sixthorderregularity}
	\| w\|_{4} \lesssim \|f\|_{-2}.
	\end{equation*}
\end{theorem}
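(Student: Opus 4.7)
The plan is to realize the statement as a direct application of Theorem~\ref{thm:Dauge88} for the strongly elliptic constant-coefficient operator $L = -\Delta^3 = (-\Delta)^3$ of order $2m = 6$ (so $m=3$), together with the spectral information supplied by Lemma~\ref{lm:spectrum}. First I would record existence and uniqueness in $H^3_0(\Omega)$: the integration-by-parts identity $\|\nabla\Delta u\|_0 = \|\nabla^3 u\|_0$ (valid on $C^\infty_c(\Omega)$ and extended by density) together with the iterated Poincar\'e inequality shows that the bilinear form $(\nabla\Delta u,\nabla\Delta v)$ is coercive on $H^3_0(\Omega)$, so Lax--Milgram yields a unique weak solution $w\in H^3_0(\Omega)$ of $-\Delta^3 w = f$ for any $f\in H^{-3}(\Omega)\supset H^{-2}(\Omega)$. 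This is exactly the $w$ in the hypothesis, which verifies the solvability assumption of Theorem~\ref{thm:Dauge88}.

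Next I would invoke Theorem~\ref{thm:Dauge88} with the regularity parameter $s = 1$, so that $s-m = -2$ and $s+m = 4$. The datum $f\in H^{-2}(\Omega)=H^{s-m}(\Omega)$ fits, and the exclusion $s\notin\{1/2,3/2,5/2\}$ is trivial since $s=1$. The only substantive hypothesis that remains is that, at each vertex $X$ of the convex polygon $\Omega$, the closed strip
$\{\lambda\in\mathbb{C}\,:\,{\rm Re}\,\lambda\in[m-1,s+m-1]\} = \{\lambda\,:\,{\rm Re}\,\lambda\in[2,3]\}$
contains no spectral value of the operator pencil $\mathcal{L}(\lambda)$ associated with the principal part $L_X$ of $-\Delta^3$ at $X$.

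This last condition is where Lemma~\ref{lm:spectrum} enters. With $m = 3$, part~(2) of that lemma states that the closed strip $\{{\rm Re}\,\lambda\in[1,3]\}$ contains no eigenvalues of $\mathcal{L}(\lambda)$; in particular the sub-strip $[2,3]$ is spectrum-free. Theorem~\ref{thm:Dauge88} then immediately delivers $w\in H^4(\Omega)$ together with the estimate $\|w\|_4\lesssim\|f\|_{-2}$, as required. I do not anticipate any serious obstacle: every ingredient is either an elementary identity or directly quoted from the cited references, and the identification $-\Delta^3=(-\Delta)^3$ ensures that Lemma~\ref{lm:spectrum}, stated for $(-\Delta)^m$, applies verbatim to the relevant pencil. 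The only point meriting a sanity check is to confirm that $s = 1$ is the right choice (part~(1) of Lemma~\ref{lm:spectrum} would be needed if one wished to go further, but is redundant here).
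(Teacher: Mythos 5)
Your proposal is correct and follows essentially the same route as the paper: Lax--Milgram on $H^3_0(\Omega)$ for unique solvability, then Theorem~\ref{thm:Dauge88} with $s=1$ combined with part (2) of Lemma~\ref{lm:spectrum} for $m=3$ to rule out spectrum in the relevant strip. The only cosmetic difference is that the paper phrases the weak form via $(\nabla^3 w,\nabla^3 v)$ rather than $(\nabla\Delta w,\nabla\Delta v)$, which coincide on $H^3_0(\Omega)$ by the density identity you cite.
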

\begin{proof}
	The weak formulation of problem \eqref{eq:sixthorder} is to find $w\in H_0^3(\Omega)$ such that
	\[
	\int_{\Omega}\nabla^3 w:\nabla^3 v \dd x=\langle f,v\rangle\quad \forall\; v\in H_0^3(\Omega).
	\]
	Here, for any two third-order tensors $\bm A$ and $\bm B$, $\bm A: \bm B=\sum_{ijk}A_{ijk}B_{ijk}$, and $\langle\cdot,\cdot\rangle$ denotes the duality pair between $H^{-3}(\Omega)$ and $H_0^3(\Omega)$. Hence, by the Poincar\'{e} inequality and the Lax-Milgram theorem, the last problem has a unique solution $w\in H_0^3(\Omega)$. In other words, the boundary value problem \eqref{eq:sixthorder} has a unique solution $w\in H_0^3(\Omega)$ for all $f\in H^{-3}(\Omega)$. According to Lemma \ref{lm:spectrum} with $m=3$, $\mathcal{L}(\lambda)$ does not have a spectrum point in the strip $1\leq{\rm Re} \lambda\leq 3$. Hence, we can conclude the desired result from Theorem \ref{thm:Dauge88} with $s=1$.
\end{proof}

With the help of the above theorem, we can obtain the following result.

\begin{theorem}\label{thm:fourthorderStokesregularity}
	Assumption~\ref{RegularityAssumption1} holds whenever $\Omega$ is a convex polygon and $\bm f \in H^{-1}(\Omega; \mathbb{R}^2)$.
\end{theorem}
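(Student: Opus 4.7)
The plan is to reduce the problem to a scalar triharmonic equation for a stream function so that Theorem~\ref{th:sixthorderregularity} can be applied. First, since $\div\bm\phi\in H_0^1(\Omega)$ and $\Delta\div\bm\phi=0$, a harmonic function with vanishing Dirichlet trace must vanish, hence $\div\bm\phi=0$ in $\Omega$. As $\Omega$ is simply connected, the 2D Poincar\'e lemma produces a scalar stream function $w\in H^3(\Omega)$ (unique up to an additive constant) with $\bm\phi=\curl w$. The boundary condition $\bm\phi|_{\partial\Omega}=\bm 0$ gives $\nabla w=\bm 0$ on $\partial\Omega$; since $\partial\Omega$ is connected, a suitable choice of the constant yields $w|_{\partial\Omega}=0$. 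The condition $\partial_{\bm n}\bm\phi=\bm 0$ then provides $\partial_{\bm n}\nabla w=\bm 0$ on $\partial\Omega$, and combined with the vanishing tangential derivatives of $\nabla w$ (which follow from $\nabla w=\bm 0$ along $\partial\Omega$), it forces the entire Hessian $\nabla^2 w$ to vanish on $\partial\Omega$. In particular $\partial_{\bm n}^2 w=0$, so $w\in H_0^3(\Omega)$.

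Next, I would eliminate $p$ by applying $\rot$ to the first equation of \eqref{eq:fourthorderStokes}. Using $\rot\nabla=0$ and $\rot\curl=-\Delta$ (and the commutativity of $\rot$ with $\Delta$), this yields the scalar triharmonic equation
\begin{equation*}
-\Delta^3 w = \rot \bm f \quad \text{in } \Omega,
\end{equation*}
with right-hand side in $H^{-2}(\Omega)$ and $\|\rot\bm f\|_{-2}\lesssim\|\bm f\|_{-1}$. Since $\Omega$ is a convex polygon and $w\in H_0^3(\Omega)$, Theorem~\ref{th:sixthorderregularity} gives $w\in H^4(\Omega)$ with $\|w\|_4\lesssim\|\bm f\|_{-1}$, and consequently $\bm\phi=\curl w\in H^3(\Omega;\mathbb R^2)$ satisfies $\|\bm\phi\|_3\lesssim\|\bm f\|_{-1}$.

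To recover the regularity of $p$, I would rearrange the first equation as $\nabla\Delta p=\bm f-\Delta^2\bm\phi$. The right-hand side now lies in $H^{-1}(\Omega;\mathbb R^2)$ with norm controlled by $\|\bm f\|_{-1}$, and on the other hand $p\in H_0^1(\Omega)$ already yields $\Delta p\in H^{-1}(\Omega)$ with $\|\Delta p\|_{-1}\lesssim\|p\|_1\lesssim\|\bm f\|_{-1}$ by Lemma~\ref{lm:fourthorderStokes}. The classical Lions/Ne\v{c}as lemma (a distribution with itself and its gradient in $H^{-1}$ belongs to $L^2$) then upgrades this to $\Delta p\in L^2(\Omega)$ with $\|\Delta p\|_0\lesssim\|\bm f\|_{-1}$. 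Finally, the standard $H^2$ regularity of the Dirichlet Laplacian on convex polygons converts this into $p\in H^2(\Omega)\cap H_0^1(\Omega)$ with $\|p\|_2\lesssim\|\Delta p\|_0\lesssim\|\bm f\|_{-1}$, completing the proof. The main subtlety of the argument is the first step: correctly translating the full vector boundary data $\bm\phi=\partial_{\bm n}\bm\phi=\bm 0$ into the scalar condition $w\in H_0^3(\Omega)$ so that Theorem~\ref{th:sixthorderregularity} is directly applicable; once this has been justified, the rest of the proof is essentially mechanical.
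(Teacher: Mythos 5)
Your proposal is correct and follows essentially the same route as the paper: kill $\div\bm\phi$ via the harmonic equation, introduce a stream function $w\in H_0^3(\Omega)$ with $\bm\phi=\curl w$, apply $\rot$ to reduce to the triharmonic problem and invoke Theorem~\ref{th:sixthorderregularity}, then recover $p$ from $\nabla\Delta p=\bm f-\Delta^2\bm\phi$ via the Ne\v{c}as/Girault--Raviart lemma and convex-domain $H^2$ regularity of the Laplacian. The only cosmetic difference is that you verify $w\in H_0^3(\Omega)$ directly from the boundary conditions, whereas the paper simply cites the potential result of Costabel--McIntosh.
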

\begin{proof}
	% {\rm step 1}: The problem \eqref{eq:regularequation} is equivalent to the following problem
	% %\begin{align}
	% %  \Delta^2\bm\phi + \nabla\Delta p  =\bm f & \textrm{ in } \Omega, \label{eq:reEquaivalent1}\\
	% %  \div\bm\phi=0 & \textrm{ in } \Omega, \label{eq:reEquaivalent2}\\
	% %\bm\phi=\partial_{\bm{n}}\bm\phi=\bm0 & \textrm{ on } \partial\Omega. \notag
	% %\end{align}
	% \begin{equation}\label{eq:reEquaivalent}
	% \begin{cases}
	%   \Delta^2\bm\phi + \nabla\Delta p  =\bm f & \textrm{ in } \Omega, \\
	%   \div\bm\phi=0 & \textrm{ in } \Omega,\\
	% \bm\phi=\partial_{\bm{n}}\bm\phi=\bm0 & \textrm{ on } \partial\Omega.
	% \end{cases}
	% \end{equation}
	% %\begin{cases}
	% %\Delta^2\bm\phi + \nabla\Delta p=\bm f & \textrm{ in } \Omega,\\
	% %\div\bm\phi=0 & \textrm{ in } \Omega,\\
	% %\bm\phi=\partial_{\bm{n}}\bm\phi=\bm0 & \textrm{ on } \partial\Omega.
	% %\end{cases}
	
	% The above problem has a unique solution $(\bm\phi, p)\in H_0^2(\Omega; \mathbb R^2)\times H_0^1(\Omega)$ by means of the Babu\v{s}ka-Brezzi theory of mixed variational problems.
	{\rm Step 1}: According to the second and third equations in \eqref{eq:fourthorderStokes}, $\div\bm\phi\in H_0^1(\Omega)$ satisfies the Laplace equation, and hence $\div\bm\phi=0$.
	Since $\bm\phi \in H_0^2(\Omega; \mathbb R^2)$, there exists a scalar function $w\in H^3_0(\Omega)$ \cite{CostabelMcIntosh2010} such that $\bm \phi = \curl w$.
	% \begin{equation}\label{eq:phicurl}
	% \bm \phi = \curl w = [ \partial_2 w, - \partial_1 w]^T.
	% \end{equation}
	Then apply the $\rot$ operator to the first equality of \eqref{eq:fourthorderStokes} to get
	\begin{equation*}		
	-\Delta^3w=\rot(\Delta^2\curl w)=\rot(\Delta^2\bm{\phi})=\rot(\Delta^2\bm{\phi}+\nabla\Delta p)=\rot\bm{f}.
	\end{equation*}
	% On the other hand, it follows from the first equality of \eqref{eq:fourthorderStokes} that
	% \[
	% \nabla \times \Delta^2 \bm \phi + \nabla \times (\nabla \Delta p)
	% %= \nabla \times \Delta^2 \bm \phi
	%  = \nabla \times \bm f = \rot \bm f = \partial_1 f_2 - \partial_2 f_1,
	% \]
	% which combined with \eqref{eq:phicurl} implies
	% \begin{equation}\label{eq:laplace3w}
	%  - \Delta^3 w  = \rot \bm f .
	% \end{equation}
	Due to Theorem \ref{th:sixthorderregularity}, we know that $w\in H^4({\Omega})$ admits the estimate
	\[
	\|w\|_4 \lesssim \|\rot \bm f\|_{-2} \lesssim \|\bm f\|_{-1}.
	\]
	By the relation $\bm \phi = \curl w$, we immediately have $\bm \phi \in H^3(\Omega;\mathbb{R}^2)$ and
	\begin{equation} \label{eq:phi3}
	\|\bm \phi\|_3 \lesssim \|\bm f\|_{-1}.
	\end{equation}
	
	{\rm Step 2}: Rewrite the first equation in \eqref{eq:fourthorderStokes} as $\nabla \Delta p = \bm f - \Delta^2 \bm \phi.$
	% \begin{equation}\label{eq:pequality}
	% \nabla \Delta p = \bm f - \Delta^2 \bm \phi.
	% \end{equation}
	Since $\bm f - \Delta^2 \bm \phi \in H^{-1}(\Omega;\mathbb{R}^2)$ and $\Delta p\in H^{-1}(\Omega)$,
	apply Theorem 2.2 in \cite{GiraultRaviart1986} to get $\Delta p\in L^2(\Omega)$ and
	\begin{equation*}
	\|\Delta p\|_0 \lesssim \|\Delta p\|_{-1}+\|\bm f - \Delta^2 \bm \phi\|_{-1}\lesssim \|p\|_{1}+\|\bm f - \Delta^2 \bm \phi\|_{-1},
	\end{equation*}
	which combined with \eqref{eq:fourthorderStokesregularity0} and \eqref{eq:phi3} yields
	\begin{equation*}
	\|\Delta p\|_0 \lesssim \|\bm f \|_{-1}.
	\end{equation*}
	Noting that $\Omega$ is convex, by the regularity of Poisson's equation,
	\begin{equation*}
	\|p\|_2 \lesssim \|\Delta p\|_0 \lesssim \|\bm f \|_{-1}.
	\end{equation*}
	Now combining \eqref{eq:phi3} and the last inequality gives \eqref{eq:fourthorderStokesregularity1}.
\end{proof}

\subsection{Regularity of the SGE model}
Now we are ready to show the regularity results of the SGE model. From now on we always suppose Assumptions~\ref{RegularityAssumption1}-\ref{RegularityAssumption2} hold.
\begin{lemma}\label{lm:fourthorderelasregularity}
	Let $\bm w\in H_0^2(\Omega; \mathbb R^d)$ be the solution of
	\begin{equation}\label{eq:fourthorderelasproblem}
	\begin{cases}
	\div\Delta\big(2\mu\bm\varepsilon(\bm w)+\lambda(\div\bm w)\bm I\big)=\bm f & {\rm  in }\; \Omega,\\
	\bm w=\partial_{\bm{n}}\bm w=\bm 0 & {\rm on }\; \partial\Omega
	\end{cases}
	\end{equation}
with $\bm f\in H^{-1}(\Omega; \mathbb{R}^d)$.
	Suppose Assumptions \ref{RegularityAssumption1}-\ref{RegularityAssumption2} hold.
	We have
	\begin{align}\label{eq:fourthorderelasregularity}
	\|\bm w\|_{2+j}+\lambda\|\div\bm w\|_{1+j}\lesssim \|\bm f\|_{j-2} \quad\textrm{ for } j=0,1,
	\\
\label{eq:fourthorderelasregularityw}
	\|\bm w\|_{3}\lesssim \inf_{q\in L^2(\Omega)}\|\bm f+\nabla q\|_{-1} + \frac{1}{\lambda}\|\bm f\|_{-1}.
	\end{align}
\end{lemma}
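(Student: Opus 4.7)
The plan is to recast \eqref{eq:fourthorderelasproblem} as a mixed saddle-point system by introducing the scaled pressure $\tilde p:=(\mu+\lambda)\div\bm w$. Using $\div\sigma(\bm w)=\mu\Delta\bm w+(\mu+\lambda)\nabla\div\bm w$, the equation becomes $\mu\Delta^2\bm w+\nabla\Delta\tilde p=\bm f$ together with $\div\bm w-\tilde p/(\mu+\lambda)=0$, whose variational form seeks $(\bm w,\tilde p)\in V\times Q$ with $V=H_0^2(\Omega;\mathbb R^d)$ and $Q=H_0^1(\Omega)\cap L_0^2(\Omega)$ satisfying
\begin{align*}
\mu(\nabla^2\bm w,\nabla^2\bm v)+(\nabla\div\bm v,\nabla\tilde p)&=\langle\bm f,\bm v\rangle\quad\forall\,\bm v\in V,\\
(\nabla\div\bm w,\nabla q)-\tfrac{1}{\mu+\lambda}(\nabla\tilde p,\nabla q)&=0\quad\forall\,q\in Q.
\end{align*}
For the case $j=0$, I would invoke the Babu\v{s}ka--Brezzi theory on this saddle point: coercivity of $\mu(\nabla^2\cdot,\nabla^2\cdot)$ on $V$ is immediate from the $H^2$-Korn inequality \eqref{eq:korninequalityH2}, and the inf--sup condition $|q|_1\lesssim\sup_{\bm v\in V}(\nabla\div\bm v,\nabla q)/|\bm v|_2$ follows from Lemma~\ref{lm:divrightinverse} by choosing $\bm v\in H_0^2$ with $\div\bm v=q$. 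This gives $|\bm w|_2+|\tilde p|_1\lesssim\|\bm f\|_{-2}$, and the Poincar\'e inequality together with $\tilde p=(\mu+\lambda)\div\bm w$ yields \eqref{eq:fourthorderelasregularity} for $j=0$.

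For $j=1$, Assumption~\ref{RegularityAssumption2} applied to the equivalent form $\Delta\mathcal L\bm w=\bm f$ gives $\|\bm w\|_3\lesssim\|\bm f\|_{-1}$. To bound $\lambda\|\div\bm w\|_2\le\|\tilde p\|_2$, I would bootstrap via the identity $\nabla\Delta\tilde p=\bm f-\mu\Delta^2\bm w\in H^{-1}$, whose norm is $\lesssim\|\bm f\|_{-1}$. A duality argument with Lemma~\ref{lm:divrightinverse} (test $\nabla\Delta\tilde p$ against $\bm v\in H_0^1$ satisfying $\div\bm v=\Delta\tilde p$) gives $\|\Delta\tilde p\|_0\lesssim\|\bm f\|_{-1}$; combined with convex-domain $H^2$-regularity for the Poisson equation, $\|\tilde p\|_2\lesssim\|\Delta\tilde p\|_0\lesssim\|\bm f\|_{-1}$.

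For the improved estimate \eqref{eq:fourthorderelasregularityw}, I would split $\bm w=\bm\phi+\bm\xi$, where $(\bm\phi,p^\ast)$ is the (appropriately rescaled) solution of the Stokes-like problem $\mu\Delta^2\bm\phi+\nabla\Delta p^\ast=\bm f$ with $\div\bm\phi=0$ from Lemma~\ref{lm:fourthorderStokes}. Assumption~\ref{RegularityAssumption1} together with \eqref{eq:fourthorderStokesregularity2} gives $\|\bm\phi\|_3\lesssim\inf_{q\in L^2(\Omega)}\|\bm f+\nabla q\|_{-1}$ and $\|p^\ast\|_2\lesssim\|\bm f\|_{-1}$. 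Subtraction then shows $\bm\xi\in V$ satisfies $\div\Delta\sigma(\bm\xi)=\nabla\Delta p^\ast$ with $\bm\xi=\partial_{\bm n}\bm\xi=\bm 0$ on $\partial\Omega$, i.e., the same problem with a pure gradient source.

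The main obstacle is the sharp estimate $\|\bm\xi\|_3\lesssim\|\bm f\|_{-1}/\lambda$. Applying Assumption~\ref{RegularityAssumption2} directly to $\bm\xi$ only yields $\|\bm\xi\|_3\lesssim\|\nabla\Delta p^\ast\|_{-1}\lesssim\|p^\ast\|_2\lesssim\|\bm f\|_{-1}$, losing a factor of $\lambda$. The $1/\lambda$ gain must be engineered by combining the sharper divergence bound $\|\div\bm\xi\|_2\lesssim\|\bm f\|_{-1}/\lambda$ (from the $j=1$ estimate applied to the $\bm\xi$-equation) with the identity $(2\mu+\lambda)\Delta^2\div\bm\xi=\Delta^2 p^\ast$ obtained by taking $\div$ of the $\bm\xi$-equation; these allow one to rewrite the biharmonic form $\mu\Delta^2\bm\xi=\nabla\Delta\bigl(p^\ast-(\mu+\lambda)\div\bm\xi\bigr)$ with an effective source whose $H^{-1}$ norm is $O(\|\bm f\|_{-1}/\lambda)$ modulo a harmonic remainder controlled by the same $H^2$-bound on $\div\bm\xi$, after which convex-domain biharmonic regularity delivers the desired estimate. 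This final step is the most delicate portion of the argument.
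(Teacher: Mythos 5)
Your treatment of the case $j=0$ is sound and essentially equivalent to the paper's: both arguments hinge on Lemma~\ref{lm:divrightinverse}, and you simply package it as a Brezzi inf--sup condition for the saddle point in $(\bm w,\tilde p)$ while the paper tests the primal weak form directly with a right inverse of the divergence. The genuine gap is in the case $j=1$. You write that ``Assumption~\ref{RegularityAssumption2} applied to $\Delta\mathcal L\bm w=\bm f$ gives $\|\bm w\|_3\lesssim\|\bm f\|_{-1}$,'' but the hidden constant in Assumption~\ref{RegularityAssumption2} is explicitly allowed to depend on the upper bound $\Lambda$ of $\lambda$, so this step is \emph{not} uniform in $\lambda$ --- and uniformity is the whole point of the lemma, since $\lesssim$ in \eqref{eq:fourthorderelasregularity} must be independent of $\lambda$ for the method to be robust in the nearly incompressible limit. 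Everything downstream in your argument ($\|\Delta\tilde p\|_0\lesssim\|\bm f\|_{-1}$, hence $\lambda\|\div\bm w\|_2\lesssim\|\bm f\|_{-1}$) inherits this non-uniform constant. The paper avoids this by invoking Assumption~\ref{RegularityAssumption2} only in the bounded range $0<\lambda\leq 2\mu C$; for large $\lambda$ it decomposes $\bm w=\bm\phi+\widetilde{\bm w}$ with $\div\widetilde{\bm w}=\div\bm w$ and $\|\widetilde{\bm w}\|_3\lesssim\|\div\bm w\|_2$ (Lemma~\ref{lm:divrightinverse}), so that $\bigl(\bm\phi,\tfrac{\mu+\lambda}{\mu}\div\bm w\bigr)$ solves the fourth-order Stokes system \eqref{eq:fourthorderStokes}; Assumption~\ref{RegularityAssumption1} then yields $\|\bm w\|_3+\tfrac{\mu+\lambda}{\mu}\|\div\bm w\|_2\leq C(\|\bm f\|_{-1}+\|\div\bm w\|_2)$, and the factor $\tfrac{\mu+\lambda}{\mu}$ on the left absorbs the $\|\div\bm w\|_2$ on the right precisely when $\lambda>2\mu C$. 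This absorption mechanism is the missing idea in your proof.

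For \eqref{eq:fourthorderelasregularityw} your decomposition $\bm w=\bm\phi+\bm\xi$ with $\bm\phi$ the divergence-free Stokes part differs from the paper's, and the step you flag as delicate is indeed where it breaks down: to obtain $\|\bm\xi\|_3\lesssim\|\bm f\|_{-1}/\lambda$ you need the effective source $\nabla\Delta\bigl(p^\ast-(\mu+\lambda)\div\bm\xi\bigr)$ to be $O(\|\bm f\|_{-1}/\lambda)$ in $H^{-1}$, but $p^\ast$ and $(\mu+\lambda)\div\bm\xi$ are each only $O(\|\bm f\|_{-1})$ individually, and the biharmonic relation $\Delta^2\bigl(p^\ast-(2\mu+\lambda)\div\bm\xi\bigr)=0$ gives no control of their difference without boundary information. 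The paper sidesteps this entirely: with the same $\widetilde{\bm w}$ as above, \eqref{eq:fourthorderStokesregularity2} gives $\|\bm w-\widetilde{\bm w}\|_3\lesssim\inf_{q\in L^2(\Omega)}\|\bm f+\nabla q\|_{-1}+\|\widetilde{\bm w}\|_3$, hence $\|\bm w\|_3\lesssim\inf_{q\in L^2(\Omega)}\|\bm f+\nabla q\|_{-1}+\|\div\bm w\|_2$, and the already-established bound $\lambda\|\div\bm w\|_2\lesssim\|\bm f\|_{-1}$ from the case $j=1$ finishes the proof. You would do better to adopt that single decomposition throughout.
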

\begin{proof}
	We follow the argument of Lemma~2.2 in \cite{BrennerSung1992} to prove \eqref{eq:fourthorderelasregularity}.
	The weak formulation of \eqref{eq:fourthorderelasproblem} is
	\begin{equation}\label{eq:20210918}
	2\mu(\nabla\bm\varepsilon(\bm w), \nabla\bm\varepsilon(\bm v)) + \lambda(\nabla\div\bm w, \nabla\div\bm v)=(\bm f, \bm v) \quad\forall~\bm v\in H_0^2(\Omega; \mathbb R^d),
	\end{equation}%\cite{LaxMilgram1954}
	which is well-posed by the $H^2$-Korn's inequality \eqref{eq:korninequalityH2}, and it holds
	$
	\|\bm w\|_2\lesssim \|\bm f\|_{-2}.
	$
	By Lemma~\ref{lm:divrightinverse}, there exists $\widetilde{\bm w}\in H_0^2(\Omega; \mathbb R^d)$ such that
	$\div\widetilde{\bm w}=\div\bm w$ and $\|\widetilde{\bm w}\|_2\lesssim \|\div\bm w\|_1.$
	Taking $\bm v=\widetilde{\bm w}$ in \eqref{eq:20210918}, it follows that
	\begin{equation*}
	\lambda|\div\bm w|_1^2=(\bm f, \widetilde{\bm w})-2\mu(\nabla\bm\varepsilon(\bm w), \nabla\bm\varepsilon(\widetilde{\bm w}))\lesssim \|\bm f\|_{-2}\|\widetilde{\bm w}\|_2+\|\bm w\|_2\|\widetilde{\bm w}\|_2,
	\end{equation*}
	which gives \eqref{eq:fourthorderelasregularity} for $j=0$.
	% \begin{equation*}
	% \|\bm w\|_2+\lambda|\div\bm w|_1\lesssim \|\bm f\|_{-2}.
	% \end{equation*}
	% Hence we acquire \eqref{eq:fourthorderelasregularity} for $j=0$.
	
	Next we prove \eqref{eq:fourthorderelasregularity} for $j=1$. The first equation in \eqref{eq:fourthorderelasproblem} can be rewritten as
	\begin{equation*}
	\Delta^2\bm w+\frac{\mu+\lambda}{\mu}\nabla\Delta\div\bm w=\frac{1}{\mu}\bm f  \quad \textrm{ in } \Omega.
	\end{equation*}
	Clearly $\bm w\in H^3(\Omega; \mathbb R^d)$ under Assumption~\ref{RegularityAssumption2}.
	Employing \cref{lm:divrightinverse} again, there exists $\widetilde{\bm w}\in H^3(\Omega; \mathbb R^d)\cap H_0^2(\Omega; \mathbb R^d)$ such that
	$\div\widetilde{\bm w}=\div\bm w$ and $\|\widetilde{\bm w}\|_3\lesssim \|\div\bm w\|_2.$
	With the help of $\widetilde{\bm w}$, we have
	\begin{equation*}
	\begin{cases}
	\Delta^2\bm\phi + \nabla\Delta p=\frac{1}{\mu}\bm f-\Delta^2\widetilde{\bm w} & \textrm{ in } \Omega,\\
	\Delta\div\bm\phi=0 & \textrm{ in } \Omega,\\
	\bm\phi=\partial_{\bm{n}}\bm\phi=\bm0 & \textrm{ on } \partial\Omega,
	\end{cases}
	\end{equation*}
	where $\bm\phi=\bm w-\widetilde{\bm w}$ and $p=\frac{\mu+\lambda}{\mu}\div\bm w$. By \eqref{eq:fourthorderStokesregularity1}-\eqref{eq:fourthorderStokesregularity2}, %Theorem~\ref{thm:fourthorderStokesregularity},
	\begin{align}\label{eq:20210923-1}
	\|\bm w-\widetilde{\bm w}\|_3+\frac{\mu+\lambda}{\mu}\|\div\bm w\|_2&\lesssim \|\bm f\|_{-1}+\|\widetilde{\bm w}\|_3,
	\\
\label{eq:20210923-2}
	\|\bm w-\widetilde{\bm w}\|_3&\lesssim
	\inf_{q\in L^2(\Omega)}\|\bm f+\nabla q\|_{-1} +\|\widetilde{\bm w}\|_3.
	\end{align}
	By \eqref{eq:20210923-1}, there exists a constant $C>0$ such that
	\begin{equation*}
	\|\bm w\|_3+\frac{\mu+\lambda}{\mu}\|\div\bm w\|_2\leq C(\|\bm f\|_{-1}+\|\div\bm w\|_2).
	\end{equation*}
	When $\lambda>2\mu C$, i.e. $C<\frac{\lambda}{2\mu}$, we get
	\begin{equation*}
	\|\bm w\|_3+\frac{2\mu+\lambda}{2\mu}\|\div\bm w\|_2\leq C\|\bm f\|_{-1}.
	\end{equation*}
	Thus, \eqref{eq:fourthorderelasregularity} holds for $j=1$. When $0<\lambda\leq2\mu C$, \eqref{eq:fourthorderelasregularity} for $j=1$ is guaranteed by Assumption~\ref{RegularityAssumption2}.
	
	It follows from \eqref{eq:20210923-2} that
	\begin{equation*}
	\|\bm w\|_3\lesssim
	\inf_{q\in L^2(\Omega)}\|\bm f+\nabla q\|_{-1} +\|\widetilde{\bm w}\|_3 \lesssim \inf_{q\in L^2(\Omega)}\|\bm f+\nabla q\|_{-1}+ \|\div\bm w\|_2,
	\end{equation*}
	which together with \eqref{eq:fourthorderelasregularity} yields \eqref{eq:fourthorderelasregularityw}.
\end{proof}

Taking $\iota=0$, problem \eqref{eq:straingradproblem} becomes the linear elasticity model. Let $\bm u_0\in H_0^1(\Omega; \mathbb R^d)$ be the solution of the linear elasticity problem
\begin{equation}\label{eq:elasticity}
\begin{cases}
-\mu\Delta\bm u_0 - (\lambda+\mu)\nabla\div\bm u_0 =\bm f & \textrm{ in } \Omega,\\
\bm u_0=\bm0 & \textrm{ on } \partial\Omega.
\end{cases}
\end{equation}
According to \cite{BrennerSung1992,MazyaRossmann2010}, it holds
\begin{equation}\label{eq:u0regularity}
\|\bm u_0\|_2+\lambda\|\div\bm u_0\|_1 \lesssim \|\bm f\|_0.
\end{equation}

\begin{lemma}\label{lm:uregularity}
	With the same assumptions as Lemma \ref{lm:fourthorderelasregularity},
	let $\bm u\in H_0^2(\Omega; \mathbb R^d)$ be the solution of problem \eqref{eq:straingradproblem}, and $\bm u_0\in H_0^1(\Omega; \mathbb R^d)$ satisfy the linear elasticity problem~\eqref{eq:elasticity}.
	It holds
	\begin{equation}\label{eq:straingradregularity1}
	|\bm u-\bm u_0|_{1} +\iota\|\bm u\|_2  + \iota^{2}\|\bm u\|_{3} \lesssim \iota^{1/2}\|\bm f\|_0.
	\end{equation}
\end{lemma}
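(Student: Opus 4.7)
The $\iota^{1/2}$-scaling in the estimate is a boundary-layer phenomenon: $\bm u - \bm u_0$ is an $O(\iota)$-wide layer near $\partial\Omega$ carrying the data mismatch $\partial_{\bm n}\bm u_0|_{\partial\Omega}$. The strategy is to peel off an explicit corrector $\bm\eta$ so that the remainder lies in $H_0^2$ and can be tested against the SGE equation, then recover the $H^3$ bound from Lemma~\ref{lm:fourthorderelasregularity}. Throughout write $\bm e := \bm u - \bm u_0 \in H_0^1(\Omega;\mathbb R^d)$.

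\textbf{Step 1 (Boundary-layer corrector).} I would construct $\bm\eta \in H^2(\Omega;\mathbb R^d)$, supported in an $O(\iota)$-tubular neighborhood of $\partial\Omega$, with $\bm\eta|_{\partial\Omega}=\bm 0$ and $\partial_{\bm n}\bm\eta|_{\partial\Omega} = -\partial_{\bm n}\bm u_0|_{\partial\Omega}$. In local coordinates $(\tau,\nu)$ with $\nu$ the distance to $\partial\Omega$, take the profile
\[
\bm\eta = -\nu\,\phi(\nu/\iota)\bigl[\bm g_{0\tau}(\tau) + g_{0n}(\tau)\bm n\bigr],\qquad \phi\in C_c^\infty([0,1)),\ \phi(0)=1,
\]
where $\bm g_{0\tau}$ and $g_{0n}$ are the tangential and normal components of $\partial_{\bm n}\bm u_0|_{\partial\Omega}$ extended from $\partial\Omega$. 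Because $\bm u_0=\bm 0$ on $\partial\Omega$ forces $g_{0n}=\div\bm u_0|_{\partial\Omega}$, the boundary-layer scaling together with the trace estimate $\|\partial_{\bm n}\bm u_0\|_{1/2,\partial\Omega}\lesssim\|\bm u_0\|_2\lesssim\|\bm f\|_0$ and the crucial bound $\lambda\|\div\bm u_0\|_1\lesssim\|\bm f\|_0$ from \eqref{eq:u0regularity} produces
\[
|\bm\eta|_j \lesssim \iota^{3/2-j}\|\bm f\|_0,\qquad \lambda\|\div\bm\eta\|_j \lesssim \iota^{1/2-j}\|\bm f\|_0,\qquad j=0,1,2,
\]
uniformly in $\lambda$.

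\textbf{Step 2 (Equation for $\bm w$ and energy estimate).} Put $\bm w := \bm e - \bm\eta = \bm u - \bm u_0 - \bm\eta \in H_0^2(\Omega;\mathbb R^d)$. Subtracting the equations for $\bm u$, $\bm u_0$, $\bm\eta$ and using $\div\Delta=\Delta\div$ on tensors yields
\[
-\div\bm\sigma(\bm w) + \iota^2\div\Delta\bm\sigma(\bm w) = \iota^2\Delta\bm f + \div\bm\sigma(\bm\eta) - \iota^2\div\Delta\bm\sigma(\bm\eta).
\]
Testing with $\bm w$ and integrating by parts (all boundary contributions vanish since $\bm w\in H_0^2$ makes $\bm w$ and $\bm\varepsilon(\bm w)$ vanish on $\partial\Omega$) gives
\begin{align*}
&2\mu\|\bm\varepsilon(\bm w)\|_0^2+\lambda\|\div\bm w\|_0^2+2\mu\iota^2\|\nabla\bm\varepsilon(\bm w)\|_0^2+\lambda\iota^2\|\nabla\div\bm w\|_0^2\\
&\quad = \iota^2(\bm f,\Delta\bm w)-(\bm\sigma(\bm\eta),\bm\varepsilon(\bm w))-\iota^2(\nabla\bm\sigma(\bm\eta),\nabla\bm\varepsilon(\bm w)).
\end{align*}
Each RHS term is estimated by Cauchy-Schwarz, split into its $\mu$ and $\lambda$ contributions, and bounded by the Step-1 scaling; Young's inequality with the correct weights, combined with the coercivity supplied by Korn and $H^2$-Korn \eqref{eq:korninequality1st}-\eqref{eq:korninequalityH2}, absorbs the $\bm w$-terms into the LHS and delivers
\[
|\bm w|_1^2 + \iota^2|\bm w|_2^2 \lesssim \iota\|\bm f\|_0^2.
\]
The triangle inequality together with the Step-1 bounds on $\bm\eta$ and $|\bm u_0|_2\lesssim\|\bm f\|_0$ (from \eqref{eq:u0regularity}) then yields $|\bm u-\bm u_0|_1 + \iota\|\bm u\|_2 \lesssim \iota^{1/2}\|\bm f\|_0$.

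\textbf{Step 3 (Third-order bound).} From the SGE equation and $-\div\bm\sigma(\bm u_0)=\bm f$, the identity $\iota^2\div\Delta\bm\sigma(\bm u) = \div\bm\sigma(\bm e)$ shows that $\bm u$ satisfies \eqref{eq:fourthorderelasproblem} with source $\iota^{-2}\div\bm\sigma(\bm e)$. Writing $\div\bm\sigma(\bm e)=\mu\Delta\bm e+(\mu+\lambda)\nabla\div\bm e$ and choosing $q=-(\mu+\lambda)\iota^{-2}\div\bm e$ in \eqref{eq:fourthorderelasregularityw} cancels the gradient part of the source. For large $\lambda$ this leaves
\[
\iota^2\|\bm u\|_3 \lesssim \mu(1+\lambda^{-1})|\bm e|_1 + \|\div\bm e\|_0 \lesssim |\bm e|_1 \lesssim \iota^{1/2}\|\bm f\|_0,
\]
while for small $\lambda$ the analogous bound via \eqref{eq:fourthorderelasregularity} with $j=1$ gives the same conclusion $\iota^2\|\bm u\|_3\lesssim|\bm e|_1$ with a uniform constant. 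This completes the proof.

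\textbf{Main obstacle.} The delicate point is the uniform-in-$\lambda$ construction of $\bm\eta$ in Step~1: a naive corrector would produce $\lambda\|\div\bm\eta\|_0$ of order $\lambda\iota^{1/2}\|\bm f\|_0$, destroying robustness. Salvaging this requires the tangential-normal decomposition of $\partial_{\bm n}\bm u_0$ and the identity $g_{0n} = \div\bm u_0|_{\partial\Omega}$, so that the troublesome factor $\lambda g_{0n}$ is replaced by the controlled quantity $\lambda\div\bm u_0$ bounded via \eqref{eq:u0regularity}. Implementing this cleanly on a convex polytope, where $\bm n$ is discontinuous at corners, demands a partition-of-unity argument and is the technical heart of the proof.
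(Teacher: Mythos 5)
Your Step 3 is essentially the paper's opening move: the identity $\div\Delta\bm\sigma(\bm u)=\iota^{-2}\div\bm\sigma(\bm u-\bm u_0)$ combined with \eqref{eq:fourthorderelasregularityw}, choosing $q$ to cancel the $\lambda\nabla\div$ part of the source, gives $\iota^{2}\|\bm u\|_{3}\lesssim|\bm u-\bm u_0|_{1}$. But your Steps 1--2 diverge from the paper, and they contain a genuine gap. The paper never builds a boundary-layer corrector: it tests \eqref{eq:20210919} directly with $\bm u-\bm u_0\in H^2(\Omega;\mathbb R^d)\cap H_0^1(\Omega;\mathbb R^d)$, accepts the resulting non-vanishing boundary term $\iota^{2}(\partial_{\bm{n}}\bm\sigma(\bm u), \bm\varepsilon(\bm u_0))_{\partial\Omega}$, and controls it via the multiplicative trace inequality $\|\partial_{\bm{n}}\bm\varepsilon(\bm u)\|_{0,\partial\Omega}\lesssim\|\bm u\|_2^{1/2}\|\bm u\|_3^{1/2}$ together with the already-established bound $\iota^{2}\|\bm u\|_{3}\lesssim|\bm u-\bm u_0|_{1}$; the $\iota^{1/2}$ falls out of the balancing. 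This sidesteps every regularity and corner issue your corrector must confront.

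The gap is in Step 1: the claimed scaling $\lambda\|\div\bm\eta\|_j\lesssim\iota^{1/2-j}\|\bm f\|_0$ does not follow from your construction. In local coordinates, the identity $g_{0n}=\div\bm u_0|_{\partial\Omega}$ controls only the term $\partial_{\nu}\bigl(\nu\phi(\nu/\iota)\bigr)g_{0n}$ of $\div\bm\eta$; there remains the tangential divergence $\nu\phi(\nu/\iota)\,\div_{\tau}\bm g_{0\tau}$, of pointwise size $O(\iota)$ on the strip, contributing roughly $\iota^{3/2}\|\div_{\tau}\bm g_{0\tau}\|_{0,\partial\Omega}$ to $\|\div\bm\eta\|_0$. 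Multiplied by $\lambda$ this is $\lambda\iota^{3/2}$ times a quantity with no $\lambda^{-1}$ decay (indeed $\div_{\tau}\bm g_{0\tau}$ involves second derivatives of $\bm u_0$ restricted to $\partial\Omega$, which are not even in $L^2(\partial\Omega)$ for $\bm u_0\in H^2$ on a convex polytope); taking $\lambda\sim\iota^{-2}$ shows the product is not $\lesssim\iota^{1/2}\|\bm f\|_0$. The same limited tangential regularity of $\partial_{\bm n}\bm u_0|_{\partial\Omega}$ (only $H^{1/2}$ facewise) already obstructs $|\bm\eta|_1\lesssim\iota^{1/2}\|\bm f\|_0$ and $|\bm\eta|_2\lesssim\iota^{-1/2}\|\bm f\|_0$ unless the trace is mollified at scale $\iota$, and the mollification error must then be tracked through all the $\lambda$-weighted terms. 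Consequently the energy estimate of Step 2, which needs $\lambda\|\div\bm\eta\|_0$ and $\lambda^{1/2}\iota|\div\bm\eta|_1$ to be absorbed, is not justified as written. The fix is to drop the corrector and follow the paper's route: prove $\iota^{2}\|\bm u\|_{3}\lesssim|\bm u-\bm u_0|_{1}$ first, then test with $\bm u-\bm u_0$ and handle the surviving boundary term with the multiplicative trace inequality.
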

\begin{proof}
	By \eqref{eq:straingradproblem} and \eqref{eq:elasticity}, we have
	\begin{equation}\label{eq:20210919}
	\div\Delta\bm\sigma(\bm u)=\iota^{-2}\div\bm\sigma(\bm u-\bm u_0).
	\end{equation}
	Then it follows from \eqref{eq:fourthorderelasregularityw} that
	\begin{align}
	\iota^{2}\|\bm u\|_{3}&\lesssim \inf_{q\in L^2(\Omega)}\|\div\bm\sigma(\bm u-\bm u_0)+\nabla q\|_{-1} + \frac{1}{\lambda}\|\div\bm\sigma(\bm u-\bm u_0)\|_{-1} \notag\\
	&\lesssim |\bm u-\bm u_0|_{1} + \frac{1}{\lambda}|\bm u-\bm u_0|_{1}+\|\div(\bm u-\bm u_0)\|_0\lesssim |\bm u-\bm u_0|_{1}.    \label{eq:20210919-1}
	\end{align}
	% \begin{equation}\label{eq:20210919-1}
	% \|\bm u\|_{3}+\lambda\|\div\bm u\|_{2}\lesssim \iota^{-2}\|\div\bm\sigma(\bm u-\bm u_0)\|_{-1}\lesssim \iota^{-2}(|\bm u-\bm u_0|_{1}+\XH{\lambda}\|\div(\bm u-\bm u_0)\|_0).
	% \end{equation}
	% Hence
	% $$
	% \|\bm u\|_{3}+\lambda\|\div\bm u\|_{2}\lesssim \iota^{-2}\|\bm f\|_{0}.
	% $$
	Multiply \eqref{eq:20210919} by $\bm u-\bm u_0\in H^2(\Omega; \mathbb R^d)\cap H_0^1(\Omega; \mathbb R^d)$ and apply the integration by parts to get
	\begin{align}
	&\quad \iota^{2}(\nabla\bm\sigma(\bm u), \nabla\bm\varepsilon(\bm u)) + (\bm\sigma(\bm u-\bm u_0), \bm\varepsilon(\bm u-\bm u_0)) \notag\\
	&=\iota^{2}(\nabla\bm\sigma(\bm u), \nabla\bm\varepsilon(\bm u_0)) - \iota^{2}(\partial_{\bm{n}}\bm\sigma(\bm u), \bm\varepsilon(\bm u_0))_{\partial\Omega}.    \label{eq:20210919-2}
	\end{align}
	By \eqref{eq:u0regularity},
	\begin{align*}
	(\nabla\bm\sigma(\bm u), \nabla\bm\varepsilon(\bm u_0)) & = 2\mu(\nabla\bm\varepsilon(\bm u), \nabla\bm\varepsilon(\bm u_0))+\lambda(\nabla\div\bm u, \nabla\div\bm u_0) \\
	& \lesssim |\bm u|_2(|\bm u_0|_2+\lambda|\div\bm u_0|_1)\lesssim |\bm u|_2\|\bm f\|_0.
	\end{align*}
	Applying the multiplicative trace inequality, \eqref{eq:u0regularity} and \eqref{eq:20210919-1},
	\begin{align*}
	-(\partial_{\bm{n}}\bm\sigma(\bm u), \bm\varepsilon(\bm u_0))_{\partial\Omega}&=-2\mu(\partial_{\bm{n}}\bm\varepsilon(\bm u), \bm\varepsilon(\bm u_0))_{\partial\Omega}-\lambda(\partial_{\bm{n}}\div\bm u, \div\bm u_0)_{\partial\Omega} \\
	&\lesssim \|\partial_{\bm{n}}\bm\varepsilon(\bm u)\|_{0,\partial\Omega} \|\bm\varepsilon(\bm u_0)\|_{0,\partial\Omega}  + \lambda\|\partial_{\bm{n}}\div\bm u\|_{0,\partial\Omega}\|\div\bm u_0\|_{0,\partial\Omega} \\
	&\lesssim \|\bm u\|_{2}^{1/2}\|\bm u\|_{3}^{1/2} (\|\bm u_0\|_{2} +\lambda\|\div\bm u_0\|_1) \lesssim \|\bm u\|_{2}^{1/2}\|\bm u\|_{3}^{1/2} \|\bm f\|_0 \\
	&\lesssim \iota^{-1}\|\bm u\|_{2}^{1/2} \|\bm f\|_0|\bm u-\bm u_0|_{1}^{1/2}.
	\end{align*}
	Combining the last two inequalities and \eqref{eq:20210919-2} yields
	\begin{align*}
	&\quad \iota^{2}(\nabla\bm\sigma(\bm u), \nabla\bm\varepsilon(\bm u)) + (\bm\sigma(\bm u-\bm u_0), \bm\varepsilon(\bm u-\bm u_0)) \\
	&\lesssim \left(\iota^{3/2}|\bm u|_2 +  \iota^{1/2}\|\bm u\|_{2}^{1/2} |\bm u-\bm u_0|_{1}^{1/2}\right)\iota^{1/2}\|\bm f\|_0 \lesssim \left(\iota\|\bm u\|_2 + |\bm u-\bm u_0|_{1}\right)\iota^{1/2}\|\bm f\|_0,
	\end{align*}
	which implies
	\begin{equation*}
	\iota\|\bm u\|_2 + |\bm u-\bm u_0|_{1}+\lambda^{1/2}\|\div(\bm u-\bm u_0)\|_0+\lambda^{1/2}\iota|\div\bm u|_1\lesssim \iota^{1/2}\|\bm f\|_0.
	\end{equation*}
	Finally, we get \eqref{eq:straingradregularity1} from \eqref{eq:20210919-1}.
\end{proof}

\begin{theorem}\label{tm:upregularity}
	Let $\bm u\in H_0^2(\Omega; \mathbb R^d)$ be the solution of problem \eqref{eq:straingradproblem}, and $\bm u_0\in H_0^1(\Omega; \mathbb R^d)$ satisfy the linear elasticity problem \eqref{eq:elasticity}. Under the same assumptions as Lemma \ref{lm:fourthorderelasregularity},
	it holds
	\begin{equation}\label{eq:straingradregularity2}
	\lambda\|\div(\bm u-\bm u_0)\|_0 + \lambda\iota|\div\bm u|_1 + \lambda\iota^2\|\div\bm u\|_2 \lesssim \iota^{1/2}\|\bm f\|_0.
	\end{equation}
\end{theorem}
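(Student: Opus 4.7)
The plan is to convert the three target bounds into norms of the pressure variables $p := \lambda\div\bm u$ and $p_0 := \lambda\div\bm u_0$, so that \eqref{eq:straingradregularity2} reduces to $\|p - p_0\|_0 + \iota|p|_1 + \iota^2\|p\|_2 \lesssim \iota^{1/2}\|\bm f\|_0$. The crucial observation is that Lemma~\ref{lm:uregularity} yields $\bm u \in H^3(\Omega; \mathbb R^d)$, hence $\Delta\bm\sigma(\bm u) \in L^2$, so one integration by parts suffices to test the strong SGE equation against any $\bm v \in H_0^1(\Omega; \mathbb R^d)$---in contrast to the mixed weak formulation, which is restricted to $\bm v \in H_0^2$. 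This added flexibility is essential because $p - p_0 \in L_0^2$ but generally not in $H_0^1$; by Lemma~\ref{lm:divrightinverse} with $m=0$, I would choose $\bm v \in H_0^1(\Omega; \mathbb R^d)$ satisfying $\div\bm v = p - p_0$ and $\|\bm v\|_1 \lesssim \|p - p_0\|_0$.

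Subtracting the Lam\'e equation tested against this $\bm v$ from the tested SGE equation eliminates $(\bm f, \bm v)$, and one further integration by parts of $(\Delta p, p - p_0)$ using $p|_{\partial\Omega} = 0$ yields the identity
\begin{align*}
\|p - p_0\|_0^2 + \iota^2|p|_1^2 & = -2\mu(\bm\varepsilon(\bm u - \bm u_0), \bm\varepsilon(\bm v)) + 2\mu\iota^2(\Delta\bm\varepsilon(\bm u), \bm\varepsilon(\bm v)) \\
& \quad + \iota^2(\nabla p, \nabla p_0) - \iota^2(\partial_{\bm n}p, p_0)_{\partial\Omega}.
\end{align*}
The first two right-hand terms are $\lesssim \iota^{1/2}\|\bm f\|_0 \|p - p_0\|_0$ by Lemma~\ref{lm:uregularity} combined with $\|\bm v\|_1 \lesssim \|p - p_0\|_0$; the third is absorbable by Young's inequality and $|p_0|_1 \lesssim \|\bm f\|_0$, the latter from \eqref{eq:u0regularity}.

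The hard part will be the boundary term $\iota^2(\partial_{\bm n}p, p_0)_{\partial\Omega}$: the multiplicative trace inequality only bounds it by a constant multiple of $\iota^{1/2}\|\bm f\|_0 \cdot (\iota|p|_1)^{1/2}(\iota^2\|p\|_2)^{1/2}$, thereby re-introducing the still-unknown quantity $\iota^2\|p\|_2 = \iota^2\lambda\|\div\bm u\|_2$. To close the loop, I would bootstrap with Lemma~\ref{lm:fourthorderelasregularity} for $j = 1$ applied to $\bm u$: since $\bm u$ solves \eqref{eq:fourthorderelasproblem} with source $\iota^{-2}\div\bm\sigma(\bm u - \bm u_0)$, this yields $\iota^2\lambda\|\div\bm u\|_2 \lesssim \|\bm\sigma(\bm u - \bm u_0)\|_0 \lesssim |\bm u - \bm u_0|_1 + \|p - p_0\|_0 \lesssim \iota^{1/2}\|\bm f\|_0 + \|p - p_0\|_0$. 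Substituting this into the boundary estimate and applying Young's inequality repeatedly absorbs $\tfrac{1}{2}(\|p - p_0\|_0^2 + \iota^2|p|_1^2)$ into the left-hand side, leaving an $O(\iota\|\bm f\|_0^2)$ remainder; this gives $\|p - p_0\|_0 + \iota|p|_1 \lesssim \iota^{1/2}\|\bm f\|_0$. Re-injecting into the bound for $\iota^2\|p\|_2$ then completes the argument, and unwinding $p = \lambda\div\bm u$ recovers \eqref{eq:straingradregularity2}.
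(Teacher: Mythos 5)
Your proposal is correct and follows essentially the same route as the paper's proof: the paper likewise first invokes Lemma~\ref{lm:fourthorderelasregularity} with $j=1$ on $\div\Delta\bm\sigma(\bm u)=\iota^{-2}\div\bm\sigma(\bm u-\bm u_0)$ to get $\lambda\iota^2\|\div\bm u\|_2\lesssim \iota^{1/2}\|\bm f\|_0+\lambda\|\div(\bm u-\bm u_0)\|_0$, then tests against a divergence right-inverse of $\div(\bm u-\bm u_0)$ from Lemma~\ref{lm:divrightinverse}, and handles the boundary term $\lambda\iota^2(\partial_{\bm n}\div\bm u,\div\bm u_0)_{\partial\Omega}$ by the multiplicative trace inequality plus that bootstrap bound and Young's inequality. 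Your rescaling to $p=\lambda\div\bm u$ and the order in which you introduce the $\|\div\bm u\|_2$ bound are only cosmetic differences.
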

\begin{proof}
	Applying Lemma~\ref{lm:fourthorderelasregularity} to \eqref{eq:20210919}, we obtain from \eqref{eq:fourthorderelasregularity} and \eqref{eq:straingradregularity1} that
	\begin{align}
	\lambda\iota^{2}\|\div\bm u\|_{2}&\lesssim \|\div\bm\sigma(\bm u-\bm u_0)\|_{-1}\lesssim \|\bm\sigma(\bm u-\bm u_0)\|_{0}\lesssim |\bm u-\bm u_0|_1
	+\lambda\|\div(\bm u-\bm u_0)\|_0 \notag\\
	&\lesssim \iota^{1/2}\|\bm f\|_0+\lambda\|\div(\bm u-\bm u_0)\|_0. \label{eq:20210923-4}
	\end{align}
	Hence it suffices to prove
	\begin{equation}\label{eq:straingradregularity0}
	\lambda\|\div(\bm u-\bm u_0)\|_0 + \lambda\iota|\div\bm u|_1 \lesssim \iota^{1/2}\|\bm f\|_0.
	\end{equation}
	Thanks to Lemma~\ref{lm:divrightinverse}, there exists $\bm v\in H^2(\Omega; \mathbb R^d)\cap H_0^1(\Omega; \mathbb R^d)$ such that
	\begin{equation}\label{eq:20210923-3}
	\div\bm v=\div(\bm u-\bm u_0),\quad \|\bm v\|_1\lesssim \|\div(\bm u-\bm u_0)\|_0.
	\end{equation}
	Multiply \eqref{eq:20210919} by $\bm v$ and apply the integration by parts to get
	\begin{align}
	&\quad \lambda\|\div(\bm u-\bm u_0)\|_0^2 +\lambda\iota^{2}|\div\bm u|_1^2 \notag\\
	&=-2\mu(\bm\varepsilon(\bm u-\bm u_0), \bm\varepsilon(\bm v)) + 2\mu\iota^{2}(\Delta\bm\varepsilon(\bm u), \bm\varepsilon(\bm v)) - \lambda\iota^{2}(\Delta\div\bm u, \div\bm u_0). \label{eq:20211217}
	\end{align}
	By the multiplicative trace inequality and \eqref{eq:20210923-4}, it holds
	\begin{align*}
	- \lambda\iota^{2}(\Delta\div\bm u, \div\bm u_0)&=\lambda\iota^{2}(\nabla\div\bm u, \nabla\div\bm u_0)- \lambda\iota^{2}(\partial_{\bm{n}}\div\bm u, \div\bm u_0)_{\partial\Omega} \\
	&\lesssim \lambda\iota^{2}|\div\bm u|_1|\div\bm u_0|_1 + \lambda\iota^{2}|\div\bm u|_1^{1/2}\|\div\bm u\|_2^{1/2}\|\div\bm u_0\|_1 \\
	&\lesssim \lambda\iota^{2}|\div\bm u|_1|\div\bm u_0|_1 + \lambda^{1/2}\iota^{5/4}|\div\bm u|_1^{1/2}\|\bm f\|_0^{1/2}\|\div\bm u_0\|_1 \\
	&\quad + \lambda\iota|\div\bm u|_1^{1/2}\|\div(\bm u-\bm u_0)\|_0^{1/2}\|\div\bm u_0\|_1.
	\end{align*}
	Then we acquire from \eqref{eq:20210923-3}-\eqref{eq:20211217} that
	\begin{align*}
	&\quad \lambda\|\div(\bm u-\bm u_0)\|_0^2 +\lambda\iota^{2}|\div\bm u|_1^2 \\
	&\lesssim (|\bm u-\bm u_0|_1 + \iota^{2}|\bm u|_3)|\bm v|_1 +\lambda\iota^{2}|\div\bm u|_1|\div\bm u_0|_1 \\
	&\quad + \lambda^{1/2}\iota^{5/4}|\div\bm u|_1^{1/2}\|\bm f\|_0^{1/2}\|\div\bm u_0\|_1+ \lambda\iota|\div\bm u|_1^{1/2}\|\div(\bm u-\bm u_0)\|_0^{1/2}\|\div\bm u_0\|_1 \\
	&\lesssim (|\bm u-\bm u_0|_1 + \iota^{2}|\bm u|_3) \|\div(\bm u-\bm u_0)\|_0 +(\lambda\iota^{2}|\div\bm u|_1^2)^{1/2}(\lambda\iota^2\|\div\bm u_0\|_1^2)^{1/2} \\
	&\quad + \iota^{3/2}|\div\bm u|_1\|\bm f\|_0 + \lambda\iota\|\div\bm u_0\|_1^2 \\
	&\quad + (\lambda\iota|\div\bm u|_1\|\div(\bm u-\bm u_0)\|_0)^{1/2}(\lambda\iota\|\div\bm u_0\|_1^2)^{1/2}.
	\end{align*}
	This implies
	\begin{align*}
	&\quad \lambda\|\div(\bm u-\bm u_0)\|_0^2 +\lambda\iota^{2}|\div\bm u|_1^2 \lesssim \frac{1}{\lambda}(|\bm u-\bm u_0|_1 + \iota^{2}|\bm u|_3)^2+ \lambda\iota\|\div\bm u_0\|_1^2 + \frac{1}{\lambda}\iota\|\bm f\|_0^2,
	\end{align*}
	i.e.,
	\begin{equation*}
	\lambda\|\div(\bm u-\bm u_0)\|_0 +\lambda\iota|\div\bm u|_1 \lesssim |\bm u-\bm u_0|_1 + \iota^{2}|\bm u|_3+\lambda\iota^{1/2}\|\div\bm u_0\|_1+\iota^{1/2}\|\bm f\|_0.
	\end{equation*}
	Therefore, we conclude \eqref{eq:straingradregularity0} from
	\eqref{eq:straingradregularity1} and \eqref{eq:u0regularity}.
\end{proof}

% \section{The mixed formulation of strain gradient \textcolor{red}{elasticity} model and its discretization }\label{Sect:mixedform}
\section{Mixed finite element method}\label{Sect:mixedform}

In this section we will construct a lower order $C^0$-continuous $H^2$-nonconforming finite element, and apply to discretize the variational formulation \eqref{eq:straingradmixedform} of SGE model \eqref{straingradLameproblem}.

\subsection{$H^2$-nonconforming finite element}\label{Sect:H1ncFEM}
For a $d$-dimensional simplex $K$ ($d=2,\,3$) with vertices $a_1, \ldots, a_{d+1}$, let $\mathcal V(K)$, $\mathcal E(K)$ and $\mathcal F(K)$ be the sets of all vertices, one-dimensional edges and $(d-1)$-dimensional faces of $K$ respectively. Set $\mathcal V^i(K):=\mathcal V(K)\cap \mathcal V^i(\mathcal T_h)$.
For $s=1,\ldots, d+1$, denote by $\lambda_s$ the barycentric coordinate corresponding to $a_s$ and there exists $F_s\in \mathcal F(K)$ such that $\lambda_s|_{F_s}=0$. Let $b_K =
\prod_{s=1}^{d+1}\lambda_s =\lambda_jb_{F_j}$ be the bubble function of $K$, where $b_{F_j}$ is the bubble function of $F_j$ with $j=1,\cdots, d+1$.

Given a face $F\in\mathcal F(K)$ with unit normal vector $\bm n$, for a vector function $\bm v$, define the tangential component $\Pi_F\bm v=\bm v-(\bm v\cdot\bm n)\bm n$. Then
\begin{equation}\label{eq:divdecomp}
\div \bm v=\div((\bm v\cdot\bm n)\bm n+\Pi_F\bm v)=\partial_{\bm{n}}(\bm v\cdot\bm n)+\div_F\bm v,
\end{equation}
where face divergence $\div_F\bm v:=\div(\Pi_F\bm v)$.

With previous preparation, take
\begin{equation*}
V(K):=\mathbb P_{2}(K;\mathbb R^d)+b_K\mathbb P_{1}(K;\mathbb R^d)+b_K^2\mathbb P_{0}(K;\mathbb R^d)
\end{equation*}
as the space of shape functions. Then
\begin{equation*}
\dim V(K)=d{d+2\choose2}+d(d+1)+d= \frac{1}{2} d(d+2)(d+3).
\end{equation*}
The degrees of freedom (DoFs) are chosen as
\begin{align}
\bm v (\delta) & \quad\forall~\delta\in \mathcal V(K),\label{H2lowestncfemnDdof1}\\
\frac{1}{|e|}(\bm v, \bm q)_e & \quad \forall~\bm{q}\in\mathbb P_{0}(e;\mathbb R^d), e\in \mathcal E(K), \label{H2lowestncfemnDdof2}\\
\frac{1}{|F|}(\partial_{\bm{n}}(\bm v\cdot\bm t_i), q)_F & \quad \forall~ q\in\mathbb P_{0}(F), F\in \mathcal F(K), i=1,\ldots, d-1, \label{H2lowestncfemnDdof31}\\
\frac{1}{|F|}(\div\bm v, q)_F & \quad \forall~ q\in\mathbb P_{0}(F), F\in \mathcal F(K), \label{H2lowestncfemnDdof32}\\
\frac{1}{|K|}(\bm v, \bm q)_K & \quad \forall~\bm q\in\mathbb P_{0}(K;\mathbb R^d), \label{H2lowestncfemnDdof4}
\end{align}
where $\bm t_1,\ldots,\bm t_{d-1}$ are $(d-1)$ unit orthogonal tangential vectors of $F$. DoF \eqref{H2lowestncfemnDdof32} is vital to prove the robust discrete inf-sup condition with respect to the size parameter $\iota$.

\begin{lemma}
	The degrees of freedom \eqref{H2lowestncfemnDdof1}-\eqref{H2lowestncfemnDdof4} are uni-solvent for $ V(K)$.
\end{lemma}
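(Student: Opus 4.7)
First I would verify the dimension count: the shape space decomposes as a direct sum $\mathbb P_2(K;\mathbb R^d)\oplus b_K\mathbb P_1(K;\mathbb R^d)\oplus b_K^2\mathbb P_0(K;\mathbb R^d)$ because the three summands live in different minimal polynomial degrees, and the total $d\binom{d+2}{2}+d(d+1)+d=\tfrac12 d(d+2)(d+3)$ matches the number of DoFs \eqref{H2lowestncfemnDdof1}--\eqref{H2lowestncfemnDdof4} (which I tally as $20$ for $d=2$ and $45$ for $d=3$). It therefore suffices to show that if $\bm v\in V(K)$ annihilates all DoFs then $\bm v=\bm 0$.

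The next step is a boundary-trace reduction. On every edge $e$ of $K$ both $b_K$ and $b_K^2$ vanish, so $\bm v|_e\in\mathbb P_2(e;\mathbb R^d)$; the vertex DoFs \eqref{H2lowestncfemnDdof1} and the edge moment \eqref{H2lowestncfemnDdof2} then kill its endpoint values and its mean, forcing $\bm v|_e=\bm 0$. For $d=3$ this gives $\bm v|_F\in\mathbb P_2(F;\mathbb R^3)$ vanishing on $\partial F$, which is zero because the minimal polynomial vanishing on three edges of $F$ has degree $3$. Hence $\bm v|_{\partial K}=\bm 0$; since $b_K$ and $b_K^2$ already vanish on $\partial K$, the $\mathbb P_2$ component of $\bm v$ vanishes on $\partial K$ and thus (the minimal polynomial vanishing on all $d+1$ faces has degree $d+1\ge 3$) must be identically zero. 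This reduces the problem to $\bm v=b_K\bm v_2+b_K^2\bm v_3$ with $\bm v_2\in\mathbb P_1(K;\mathbb R^d)$ and $\bm v_3\in\mathbb P_0(K;\mathbb R^d)$.

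Now I would exploit the face DoFs. A direct computation on $F_j$ (using $b_K=\lambda_j b_{F_j}$, so $\partial_{\bm n_j}b_K|_{F_j}=(\partial_{\bm n_j}\lambda_j)b_{F_j}$ and $\partial_{\bm n_j}(b_K^2)|_{F_j}=0$) gives
\[
\partial_{\bm n_j}(\bm v\cdot\bm t_i)|_{F_j}=(\partial_{\bm n_j}\lambda_j)b_{F_j}(\bm v_2\cdot\bm t_i)|_{F_j},
\]
and, using \eqref{eq:divdecomp} and that the face-divergence contribution lives on $F_j$ where $b_K=0$,
\[
\div\bm v|_{F_j}=(\partial_{\bm n_j}\lambda_j)b_{F_j}(\bm v_2\cdot\bm n_j)|_{F_j}.
\]
Therefore DoFs \eqref{H2lowestncfemnDdof31} and \eqref{H2lowestncfemnDdof32} together package into the single vector identity $\int_{F_j}b_{F_j}\bm v_2=\bm 0$ for every face. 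Since $b_{F_j}$ is invariant under the $S_d$ action permuting the $d$ barycentric coordinates on $F_j$ while $\bm v_2$ is affine, this collapses to $\bm v_2(\hat c_j)=\bm 0$, where $\hat c_j$ is the centroid of $F_j$. The relation $\hat c_j-\bar a=-\tfrac1d(a_j-\bar a)$ (with $\bar a$ the centroid of $K$) shows the $d+1$ face centroids are affinely independent, so $\bm v_2\equiv\bm 0$. Finally $\bm v=b_K^2\bm v_3$ with $\bm v_3$ constant and $b_K^2>0$ in $K^\circ$, so the interior moment \eqref{H2lowestncfemnDdof4} forces $\bm v_3=\bm 0$, completing the unisolvence.

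The substantive step is the third one: identifying the right normalization so that the two face families of DoFs \eqref{H2lowestncfemnDdof31}--\eqref{H2lowestncfemnDdof32} merge into the clean vector condition $\int_{F_j}b_{F_j}\bm v_2=\bm 0$, and then invoking the symmetry argument to convert these to pointwise conditions at the face centroids. All other steps are either dimension bookkeeping or standard vanishing arguments for polynomials on a simplex.
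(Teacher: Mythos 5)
Your proof is correct and follows essentially the same route as the paper: dimension count, boundary-trace reduction to $\bm v=b_K\bm v_1+b_K^2\bm v_2$, use of \eqref{eq:divdecomp} to merge the face DoFs \eqref{H2lowestncfemnDdof31}--\eqref{H2lowestncfemnDdof32} into $\int_F b_F\bm v_1=\bm 0$, and then the interior moment. The only difference is that you spell out (via the symmetry/centroid argument) why $\int_{F}b_{F}\bm v_1=\bm 0$ on all faces forces $\bm v_1\equiv\bm 0$, a step the paper simply asserts.
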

\begin{proof}
	The number of the degrees of freedom \eqref{H2lowestncfemnDdof1}-\eqref{H2lowestncfemnDdof4} is
	\[
	d(d+1)+d{d+1\choose2}+d(d+1)+d = \frac{1}{2}d(d+2)(d+3)=\dim V(K).
	\]
	% which equals to $\dim V(K)$.
	
	Take $\bm v\in V(K)$ and assume all the degrees of freedom \eqref{H2lowestncfemnDdof1}-\eqref{H2lowestncfemnDdof4} vanish, then we prove $\bm v=\bm0$.
	Noting that $\bm v|_{F}\in\mathbb P_{2}(F;\mathbb R^d)$ for each $F\in\mathcal F(K)$, we get from the vanishing DoFs \eqref{H2lowestncfemnDdof1}-\eqref{H2lowestncfemnDdof2} that $\bm v|_{\partial K}=\bm0$. As a result there exist $\bm v_1\in\mathbb P_{1}(K;\mathbb R^d)$ and $\bm v_2\in\mathbb P_{0}(K;\mathbb R^d)$ such that $\bm v=b_K\bm v_1+b_K^2\bm v_2$. Thanks to \eqref{eq:divdecomp}, the vanishing DoF \eqref{H2lowestncfemnDdof32} implies $(\partial_{\bm{n}}(\bm v\cdot\bm n), q)_F=0$ for all $q\in\mathbb P_{0}(F)$, which combined with the vanishing DoF~\eqref{H2lowestncfemnDdof31} yields $(\partial_{\bm{n}}\bm v, \bm q)_F=0$ for all $\bm q\in\mathbb P_{0}(F;\mathbb R^d)$, and thus
	\begin{equation*}
	(b_F\bm v_1, \bm q)_F=0\quad\forall~\bm q\in\mathbb P_{0}(F;\mathbb R^d), F\in \mathcal F(K).
	\end{equation*}
    As a result $\bm v_1=\bm 0$.
	Finally, we conclude $\bm v=\bm v_2=\bm0$ from the vanishing DoF \eqref{H2lowestncfemnDdof4}.
\end{proof}

Define a global $H^2$-nonconforming finite element space:
\begin{align*}
V_h=&\{\bm v_h\in L^2(\Omega;\mathbb R^d)| \, \bm v_h|_K\in V(K) \textrm{ for each } K\in\mathcal T_h, \textrm{ all the  DoFs \eqref{H2lowestncfemnDdof1}-\eqref{H2lowestncfemnDdof4}} \\
&\qquad\qquad\qquad \textrm{ are single-valued}, \textrm{and DoFs \eqref{H2lowestncfemnDdof1}-\eqref{H2lowestncfemnDdof32} on boundary vanish}\}.
\end{align*}
Clearly $ V_h\subset H_0^1(\Omega;\mathbb R^d)$, but $ V_h\not\subset H^2(\Omega;\mathbb R^d)$.
The finite element space $V_h$ has the weak continuity
\begin{equation}\label{eq:weakcontinuity}
\int_F\llbracket\nabla_h\bm v_h\rrbracket{\rm d}S=\bm0\quad \forall~\bm v_h\in V_h, F\in\mathcal F(\mathcal T_h).
\end{equation}
Here $\nabla_h$ is the elementwise version of $\nabla$ with respect to $\mathcal T_h$.

%\subsubsection{Discrete inf-sup condition}
Introduce the Lagrange element space $Q_h=Q_h^L\cap Q$, where
\[
Q_h^L:=\{q\in H^1(\Omega) \,|\, q|_K\in\mathbb P_1(K) \textrm{ for each } K\in\mathcal T_h\}.
\]
Then the dicretization formulation of \eqref{eq:straingradmixedform} is to find $(\bm u_h,p_h)\in V_h\times Q_h$ such that
\begin{equation}\label{eq:discretestraingradmixedform}
\begin{cases}
a_h(\bm u_h, \bm v_h)
+b_h(\bm v_h, p_h)=(\bm f, \bm v_h)&  \forall~\bm v_h\in V_h,\\
b_h(\bm u_h, q_h)-c(p_h,q_h)=0 & \forall~q_h\in Q_h,
\end{cases}
\end{equation}
where
\[
a_h(\bm u_h, \bm v_h):=2\mu\big((\bm\varepsilon(\bm u_h), \bm\varepsilon(\bm v_h))+\iota^2(\nabla_h\bm\varepsilon(\bm u_h), \nabla_h\bm\varepsilon(\bm v_h))\big),
\]
\[
b_h(\bm v_h, p_h):=(\div\bm v_h, p_h)+\iota^2(\nabla_h\div\bm v_h, \nabla p_h).
\]
Define squared norm
$\|\bm v_h\|^2_{V,h} : = |\bm v_h|_1^2 + \iota ^2 |\bm v_h|_{2,h}^2$ with $|\bm v_h|_{2,h}^2=\sum_{K\in\mathcal T_h}|\bm v_h|_{2,K}^2.$ Clearly the discrete linear forms $a_h(\cdot,\cdot)$ and $b_h(\cdot,\cdot)$ are continuous on $V_h\times V_h$ and $V_h\times Q_h$, respectively.
% By the algebraic inequality $( 1-1/\sqrt{2})|\nabla^2\bm v|^2\leq |\nabla\bm\varepsilon(\bm v)|^2$ for smooth $\bm v$ \cite[(6)]{LiMingWang2021Korn},
% % Due to the discrete $H^2$-Korn's inequality \cite[Theorem 2]{LiMingWang2021Korn},
% we have
Since $
( 1-1/\sqrt{2})|\bm v_h|_{2,h}^2\lesssim \|\nabla_h\bm\varepsilon(\bm v_h)\|_0^2$ and the Korn's inequality~\eqref{eq:korninequality1st}, we achieve the discrete coercivity
\begin{equation}\label{eq:discretecoercivity}
\|\bm v_h\|_{V,h}^2\lesssim a_h(\bm v_h, \bm v_h)\quad\forall~\bm{v}_h\in V_h.
\end{equation}

\subsection{ Discrete inf-sup condition and uni-solvence}
To derive the unisolvence of the mixed finite element method \eqref{eq:discretestraingradmixedform},
we first introduce the second order Brezzi-Douglas-Marini (BDM) element \cite{BoffiBrezziFortin2013,ChenHuang2022}. %BrezziDouglasMarini1986,BrezziDouglasDuranFortin1987,
The second order BDM element takes $\mathbb P_{2}(K;\mathbb R^d)$ as the shape function space,
and the degrees of freedom are chosen as \cite{ChenHuang2022}
\begin{align}
(\bm v\cdot\bm n, q)_F & \quad \forall~ q\in\mathbb P_{2}(F), F\in \mathcal F(K), \label{BDMdof1}\\
(\bm v, \bm q)_K & \quad \forall~\bm q\in\mathbb P_{1}(K;\mathbb R^d)\; {\rm satisfying }\; \bm x\cdot\bm q\in\mathbb P_{1}(K). \label{BDMdof2}
\end{align}
Let $I_K^{BDM}: H^1(K; \mathbb R^d)\to \mathbb P_{2}(K;\mathbb R^d)$ be the nodal interpolation operator based on DoFs \eqref{BDMdof1}-\eqref{BDMdof2}. It holds
\begin{equation}\label{eq:BDMdivcd}
\div(I_K^{BDM}\bm v)=Q_K(\div\bm v) \quad\forall~\bm v\in H^1(K; \mathbb R^d),
\end{equation}
where $Q_K$ is the standard $L^2$ projection operator from $L^2(K)$ to $\mathbb P_1(K)$. Let $I_h^{BDM}$ be the global version of $I_K^{BDM}$.

Now we define interpolation operator $I_h: H_0^2(\Omega; \mathbb R^d)\to V_h$ as follows:
\begin{align}
(I_h\bm v) (\delta)&=\frac{1}{\#\mathcal T_{\delta}}\sum_{K\in\mathcal T_{\delta}}(I_K^{BDM}\bm v)(\delta),\notag\\
(I_h\bm v, \bm q)_e&=\frac{1}{\#\mathcal T_{e}}\sum_{K\in\mathcal T_{e}}(I_K^{BDM}\bm v, \bm q)_e  \quad \forall~\bm q\in\mathbb P_{0}(e;\mathbb R^d), \notag\\
(\partial_{\bm{n}}(\Pi_F(I_h\bm v)), \bm q)_F&=\frac{1}{\#\mathcal T_{F}}\sum_{K\in\mathcal T_{F}}(\partial_{\bm{n}}(\Pi_F(I_K^{BDM}\bm v)), \bm q)_F  \quad \forall~\bm q\in\mathbb P_{0}(F;\mathbb R^{d-1}), \notag\\
(\div(I_h\bm v), q)_F&=\frac{1}{\#\mathcal T_{F}}\sum_{K\in\mathcal T_{F}}(\div(I_K^{BDM}\bm v), q)_F \quad \forall~q\in\mathbb P_{0}(F), \label{eq:20210821-1}\\
(I_h\bm v, \bm q)_K&=(\bm v, \bm q)_K  \quad \forall~\bm q\in\mathbb P_{0}(K;\mathbb R^d), \label{eq:20210821-2}
\end{align}
for $\delta\in \mathcal V^i(\mathcal T_h)$, $e\in \mathcal E^i(\mathcal T_h)$, $F\in \mathcal F^i(\mathcal T_h)$ and $ K\in \mathcal T_h$.

\begin{lemma}\label{lem:Ihestimate}
	We have
	\begin{align}\label{eq:Ihestimate1}
	|I_h\bm v|_{j,h}&\lesssim |\bm v|_j\quad\forall~\bm v\in H_0^2(\Omega;\mathbb R^d), j=1,2,
	\\
\label{eq:Ihestimate3}
	\sum_{i=0}^3h_K^{i}|\bm v-I_h\bm v|_{i,K} &\lesssim h_K^3\sum_{\delta\in\mathcal V(K)}|\bm v|_{3, \omega_{\delta}}\quad\forall~\bm v\in H_0^2(\Omega;\mathbb R^d)\cap H^3(\Omega;\mathbb R^d).
	\end{align}
\end{lemma}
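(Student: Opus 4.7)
The plan follows a Scott--Zhang-type strategy: establish local stability of $I_h$ on a macro patch around each element, verify local polynomial reproduction on the same patch, and combine them via a Bramble--Hilbert argument. Write $\omega_K^* := \bigcup_{\delta\in\mathcal V(K)}\omega_\delta$ for the macro patch of $K$; by shape regularity this patch contains all simplices involved in the averaging formulas \eqref{eq:20210821-1}--\eqref{eq:20210821-2} restricted to $K$, and the patches $\{\omega_K^*\}_{K\in\mathcal T_h}$ have uniformly bounded overlap.

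First I would verify local polynomial reproduction: if $\bm v|_{\omega_K^*}\in\mathbb P_2(\omega_K^*;\mathbb R^d)$, then $I_h\bm v|_K = \bm v|_K$. Indeed, since $\mathbb P_2(K';\mathbb R^d)$ lies in the BDM shape function space on every simplex $K'$, one has $I_{K'}^{BDM}\bm v = \bm v$ on each $K'\subset\omega_K^*$. Consequently every averaged quantity on the right-hand sides of \eqref{eq:20210821-1}--\eqref{eq:20210821-2} agrees with the corresponding DoF of $\bm v$, so $I_h\bm v$ and $\bm v$ share the same DoFs \eqref{H2lowestncfemnDdof1}--\eqref{H2lowestncfemnDdof4} on $K$; unisolvence of $V(K)$ forces $I_h\bm v|_K = \bm v|_K$.

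Next I would establish the local stability $|I_h\bm v|_{j,K}\lesssim |\bm v|_{j,\omega_K^*}$ for $j=0,1,2$. Mapping $K$ to a reference simplex $\hat K$ and invoking norm equivalence on the finite-dimensional space $V(K)$, it suffices to estimate each DoF of $I_h\bm v$ on $K$, with its natural scaling weight, by $|\bm v|_{j,\omega_K^*}$. Each such DoF is by construction an average of the analogous DoF of $I_{K'}^{BDM}\bm v$ over a bounded number of neighbouring simplices $K'\subset\omega_K^*$. Standard $L^2$-stability of the BDM interpolant, the commuting relation \eqref{eq:BDMdivcd} to handle the divergence DoF, and the multiplicative trace inequality to convert face integrals to $H^j$ volume norms on $K'$ together yield the bound. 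Squaring and summing over $K$, with the finite-overlap property of $\omega_K^*$, produces \eqref{eq:Ihestimate1}.

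Finally, combining reproduction with local stability gives \eqref{eq:Ihestimate3} by a Bramble--Hilbert argument. For any $\bm q\in\mathbb P_2(\omega_K^*;\mathbb R^d)$ and $i=0,1,2$, local reproduction yields $I_h\bm q|_K=\bm q|_K$, hence
\[
|\bm v - I_h\bm v|_{i,K} \le |\bm v - \bm q|_{i,K} + |I_h(\bm v - \bm q)|_{i,K} \lesssim |\bm v - \bm q|_{i,\omega_K^*}.
\]
Taking the infimum over $\bm q\in\mathbb P_2$ and invoking the Deny--Lions lemma produces $|\bm v - I_h\bm v|_{i,K}\lesssim h_K^{3-i}|\bm v|_{3,\omega_K^*}$. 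For $i=3$, the inverse inequality on $V(K)$ together with the case $i=2$ gives $|I_h\bm v|_{3,K} = |I_h(\bm v - \bm q)|_{3,K}\lesssim h_K^{-1}|I_h(\bm v - \bm q)|_{2,K}\lesssim |\bm v|_{3,\omega_K^*}$. Summing over the vertex patches of $K$ absorbs $\omega_K^*$ into $\bigcup_{\delta\in\mathcal V(K)}\omega_\delta$, giving \eqref{eq:Ihestimate3}.

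The main obstacle will be the second-order stability bound $|I_h\bm v|_{2,K}\lesssim |\bm v|_{2,\omega_K^*}$. The face DoFs \eqref{H2lowestncfemnDdof31}--\eqref{H2lowestncfemnDdof32} couple normal derivatives of $\bm v$ with averages over neighbouring simplices, and controlling them by the patchwise $H^2$ seminorm requires a careful combination of scaled trace inequalities on each $K'$ with the BDM interpolation estimates, while tracking the correct powers of $h_K$ through the scaling.
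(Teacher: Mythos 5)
Your strategy (local stability plus $\mathbb P_2$-reproduction plus a Bramble--Hilbert argument, in the style of Cl\'ement/Scott--Zhang) is genuinely different from the paper's. The paper instead writes $\bm v-I_h\bm v=(\bm v-I_K^{BDM}\bm v)+(I_K^{BDM}\bm v-I_h\bm v)$ on each $K$, observes that the two interpolants share the volume moments, and bounds the remaining DoFs of the difference by jumps of $I_{K}^{BDM}\bm v$, $\partial_{\bm n}(I_{K}^{BDM}\bm v)$ and $Q_{K}(\div\bm v)$ across faces; since $\bm v$ and $\nabla\bm v$ have single-valued traces, these jumps coincide with jumps of the BDM interpolation error, and trace inequalities together with the standard estimates for $I_K^{BDM}$ and $Q_K$ finish the proof. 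That route is entirely elementwise and needs no reproduction property of $I_h$ itself.

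The gap in your proposal: the reproduction claim $I_h\bm q|_K=\bm q|_K$ for $\bm q\in\mathbb P_2(\omega_K^*;\mathbb R^d)$ fails for every element $K$ having a vertex, edge or face on $\partial\Omega$. The operator $I_h$ maps into $V_h$, whose DoFs \eqref{H2lowestncfemnDdof1}--\eqref{H2lowestncfemnDdof32} on the boundary are forced to vanish (the defining formulas for $I_h$ are prescribed only at interior vertices, edges and faces), whereas a generic quadratic $\bm q$ has nonzero boundary DoFs, so $I_h\bm q$ and $\bm q$ do \emph{not} share all DoFs on such $K$. Consequently the chain $|\bm v-I_h\bm v|_{i,K}\le|\bm v-\bm q|_{i,K}+|I_h(\bm v-\bm q)|_{i,K}$, and likewise the identity $|I_h\bm v|_{3,K}=|I_h(\bm v-\bm q)|_{3,K}$ and the seminorm-only local stability needed for \eqref{eq:Ihestimate1} (which relies on subtracting constants or affine functions), all break down on the boundary layer of elements. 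The repair is to insert the extra term $|\bm q-I_h\bm q|_{i,K}$, which by scaling is controlled by the boundary DoFs of $\bm q$; these equal the boundary DoFs of $\bm q-\bm v$ because $\bm v\in H_0^2(\Omega;\mathbb R^d)$ has vanishing trace and normal-derivative trace on $\partial\Omega$, and are then absorbed by the Bramble--Hilbert bound. Without this step (or the paper's jump-based argument, for which boundary faces cause no difficulty since the one-sided ``jump'' there is again the interpolation error), the proof is incomplete. The remaining ingredients of your plan --- locality of the averaging within $\omega_K^*$, BDM stability, the use of \eqref{eq:BDMdivcd} for the divergence DoF, and the inverse estimate for the $i=3$ term --- are sound.
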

\begin{proof}
	Thanks to \eqref{BDMdof2} and \eqref{eq:20210821-2}, it follows that
	\begin{equation*}
	(I_h\bm v - I_K^{BDM}\bm v, \bm q)_K=0  \quad \forall~\bm{q}\in\mathbb P_{0}(K;\mathbb R^d).
	\end{equation*}
	Then
	by the inverse inequality, scaling argument and \eqref{eq:BDMdivcd},
	\begin{align*}
	&h_K^{2i}|I_h\bm v - I_K^{BDM}\bm v|_{i,K}^2 \\
	\lesssim& h_K^d\sum_{\delta\in \mathcal V(K)}(I_h\bm v - I_K^{BDM}\bm v)^2(\delta) + h_K^{d-1}\sum_{e\in \mathcal E(K)}\|I_h\bm v - I_K^{BDM}\bm v\|_{0,e}^2\\
	&+ h_K^{3}\sum_{F\in \mathcal F(K)}\|\partial_{\bm{n}}(\Pi_F(I_h\bm v-I_K^{BDM}\bm v))\|_{0,F}^2 + h_K^{3}\sum_{F\in \mathcal F(K)}\|\div(I_h\bm v-I_K^{BDM}\bm v)\|_{0,F}^2  \\
    \lesssim	& h_K\sum_{\delta\in \mathcal V(K)}\sum_{F\in \mathcal F(T_h),\delta\in F}\|\llbracket I_K^{BDM}\bm v\rrbracket\|_{0,F}^2+h_K^3\sum_{F\in \mathcal F(K)}\|\llbracket \partial_{\bm{n}}(I_K^{BDM}\bm v)\rrbracket\|_{0,F}^2 \\
	& + h_K^{3}\sum_{F\in \mathcal F(K)}\|\llbracket Q_K(\div\bm v)\rrbracket\|_{0,F}^2.
	\end{align*}
	This implies
	\begin{align*}
	h_K^{2i}|\bm v-I_h\bm v|_{i,K}^2&\lesssim h_K^{2i}|\bm v-I_K^{BDM}\bm v|_{i,K}^2 + h_K\sum_{\delta\in \mathcal V(K)}\sum_{F\in \mathcal F(T_h),\delta\in F}\|\llbracket I_K^{BDM}\bm v\rrbracket\|_{0,F}^2 \\
	&\quad +h_K^3\sum_{F\in \mathcal F(K)}\|\llbracket \partial_{\bm{n}}(I_K^{BDM}\bm v)\rrbracket\|_{0,F}^2+ h_K^{3}\sum_{F\in \mathcal F(K)}\|\llbracket Q_K(\div\bm v)\rrbracket\|_{0,F}^2.
	\end{align*}
	Finally, we end the proof by employing the inverse inequality, the trace inequality, and the error estimates of $I_K^{BDM}$ and $Q_K$.
\end{proof}

% Clearly $\bm v_h\in\bm H_0^1(\Omega;\mathbb R^d)$, and by \eqref{eq:20210821},
% \begin{equation}\label{eq:20210821-3}
% |\bm v_h|_1+\iota|\bm v_h|_{2,h}\lesssim |\bm v|_1+\iota|\bm v|_{2}\lesssim \|q_h\|_0+\iota|q_h|_1.
% \end{equation}

Next we present the discrete inf-sup condition.
\begin{lemma}
	It holds the discrete inf-sup condition
	\begin{equation}\label{eq:ncfeminfsup}
	\|q_h\|_{Q}=\|q_h\|_0+\iota|q_h|_1\lesssim\sup_{\bm v_h\in\bm V_h}\frac{b_h(\bm v_h, q_h)}{\|\bm v_h\|_{V,h}} \quad\forall~q_h\in Q_h.
	\end{equation}
\end{lemma}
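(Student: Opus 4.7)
The plan is a Fortin-operator argument. Given $q_h \in Q_h \subset H_0^1(\Omega) \cap L_0^2(\Omega)$, I will invoke Lemma~\ref{lm:divrightinverse} with $m=1$ to produce $\bm v \in H_0^2(\Omega; \mathbb R^d)$ satisfying $\div \bm v = q_h$ and $\|\nabla \bm v\|_j \lesssim \|q_h\|_j$ for $j=0,1$; a Poincar\'e bound for $\bm v \in H_0^1$ then gives $\|\bm v\|_V \lesssim \|q_h\|_0 + \iota |q_h|_1 \eqsim \|q_h\|_Q$. I will then use $I_h \bm v \in V_h$ as the test function; by Lemma~\ref{lem:Ihestimate}, $\|I_h \bm v\|_{V,h} \lesssim \|\bm v\|_V \lesssim \|q_h\|_Q$. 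The inf-sup bound will follow from the identity $b_h(I_h \bm v, q_h) = \|q_h\|_0^2 + \iota^2 |q_h|_1^2 = \|q_h\|_Q^2$, which splits naturally into an $L^2$ piece and an $\iota^2$-weighted piece.

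\textbf{Matching the $L^2$ piece.} For $(\div I_h \bm v, q_h)$, I would integrate by parts element-wise; since $I_h \bm v \in V_h \subset H_0^1(\Omega; \mathbb R^d)$, all inter-element and boundary traces cancel and the expression reduces to $-(I_h \bm v, \nabla q_h)$. Because $q_h$ is piecewise linear, $\nabla q_h|_K \in \mathbb P_0(K; \mathbb R^d)$, so the moment DoF~\eqref{eq:20210821-2} yields $(I_h \bm v, \nabla q_h)_K = (\bm v, \nabla q_h)_K$ on every $K$. Reversing the integration by parts recovers $(\div I_h \bm v, q_h) = (\div \bm v, q_h) = \|q_h\|_0^2$.

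\textbf{Matching the $\iota^2$-weighted piece (the crucial step).} For $(\nabla_h \div I_h \bm v, \nabla q_h)$, element-wise integration by parts combined with $\Delta q_h = 0$ on each $K$ reduces everything to the face sum $\sum_K \int_{\partial K} \div I_h \bm v\,\partial_{\bm n} q_h$. On each interior face $F$, $\partial_{\bm n} q_h$ is a constant from each side, while $\int_F \div I_h \bm v$ is single-valued by DoF~\eqref{H2lowestncfemnDdof32}; combining the averaging formula~\eqref{eq:20210821-1}, the commuting identity~\eqref{eq:BDMdivcd}, and the fact that $Q_K q_h = q_h|_K$ (since $q_h|_K \in \mathbb P_1(K)$), one obtains $\int_F \div I_h \bm v = \int_F q_h = \int_F \div \bm v$. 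Boundary-face contributions vanish because boundary DoFs of $V_h$ are zero and symmetrically $\div \bm v|_{\partial\Omega} = 0$ from $\bm v \in H_0^2$. Performing the same integration by parts on the continuous side produces the identical face sum, so $(\nabla_h \div I_h \bm v, \nabla q_h) = (\nabla \div \bm v, \nabla q_h) = |q_h|_1^2$. Combining the two pieces gives $b_h(I_h \bm v, q_h) = \|q_h\|_Q^2$, and the claimed estimate is immediate.

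\textbf{Main obstacle.} The difficult step is matching the face integrals in the $\iota^2$-weighted part: any $O(h^{1/2})$ mismatch between the face averages of $\div I_h \bm v$ and $\div \bm v$ would destroy $\iota$-robustness. This is exactly where the face DoF~\eqref{H2lowestncfemnDdof32} is indispensable---as already flagged by the authors as ``vital'' right after its introduction---since it forces $\int_F \div I_h \bm v$ to be single-valued across $F$; together with the choice of the piecewise linear $Q_h$ (making $Q_K q_h = q_h|_K$ so that \eqref{eq:BDMdivcd} yields exact face averages), it eliminates all spurious jump contributions and produces the identity rather than a degraded bound.
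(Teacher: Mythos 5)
Your proposal is correct and follows essentially the same route as the paper: take $\bm v$ from Lemma~\ref{lm:divrightinverse} with $\div\bm v=q_h$, test with $I_h\bm v$, bound $\|I_h\bm v\|_{V,h}$ via \eqref{eq:Ihestimate1}, and use the volume moments \eqref{eq:20210821-2} for the $L^2$ part and the face divergence averages \eqref{eq:20210821-1} together with $\div(I_K^{BDM}\bm v)=Q_K(\div\bm v)=q_h$ for the $\iota^2$-weighted part to get $b_h(I_h\bm v,q_h)=\|q_h\|_Q^2$ exactly. Your identification of DoF~\eqref{H2lowestncfemnDdof32} as the mechanism that makes the face sums cancel is precisely the point the paper exploits.
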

\begin{proof}
By Lemma~\ref{lm:divrightinverse},
	there exists $\bm v\in H_0^2(\Omega;\mathbb R^d)$ satisfying $\div\bm v=q_h$ and
	$%\label{eq:20210821}
	|\bm v|_1+\iota|\bm v|_2\lesssim \|q_h\|_0+\iota|q_h|_1.
	$
	Take $\bm v_h=I_h\bm v\in H_0^1(\Omega;\mathbb R^d)$. By \eqref{eq:Ihestimate1},
	\begin{equation}\label{eq:20210821-3}
	|\bm v_h|_1+\iota|\bm v_h|_{2,h}\lesssim |\bm v|_1+\iota|\bm v|_{2}\lesssim \|q_h\|_0+\iota|q_h|_1.
	\end{equation}
	For $K\in\mathcal T_h$, by \eqref{eq:BDMdivcd} and $\div\bm v\in Q_h$,
	we have $\div(I_K^{BDM}\bm v)=Q_K(\div\bm v)=\div\bm v$.
	Then we get from \eqref{eq:20210821-1} and \eqref{eq:20210821-2} that
	\begin{align*}
	(\div(\bm v_h-\bm v), q_h)&=-(\bm v_h-\bm v, \nabla q_h)=0,
	\\
	(\nabla_h\div(\bm v_h-\bm v), \nabla q_h)&=\sum_{K\in\mathcal T_h}(\div(\bm v_h-\bm v), \partial_{\bm{n}} q_h)_{\partial K}=0.
	\end{align*}
	Combining the last two equations gives
\begin{equation*}	
	b_h(\bm v_h, q_h)=(\div\bm v, q_h)+\iota^2(\nabla\div\bm v, \nabla q_h)=\|q_h\|_0^2+\iota^2|q_h|_1^2,
\end{equation*}
	which together with \eqref{eq:20210821-3} ends the proof.
\end{proof}

\begin{theorem}
	It holds the discrete stability
	\begin{equation}\label{eq:ncfemstability}
	\|\tilde{\bm u}_h\|_{V,h}+\|\tilde{p}_h\|_Q \lesssim \sup_{\bm v_h\in V_h\atop q_h\in Q_h}\frac{a_h(\tilde{\bm u}_h, \bm v_h)+b_h(\bm v_h, \tilde{p}_h)-b_h(\tilde{\bm u}_h, q_h)+c(\tilde{p}_h,q_h)}{\|\bm v_h\|_{V,h}+\|q_h\|_Q}
	\end{equation}
for $\tilde{\bm u}_h\in V_h$ and $\tilde{p}_h\in Q_h$.
	Then the mixed finite element method \eqref{eq:discretestraingradmixedform} is well-posed.
\end{theorem}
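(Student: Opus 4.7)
The plan is to reduce the stability estimate \eqref{eq:ncfemstability} to the standard Babu\v{s}ka--Brezzi framework for perturbed saddle-point problems, exactly paralleling the continuous argument used after \eqref{intro:infsup} at the end of Section~\ref{Sect:Notation}. The three required ingredients are already available: the discrete coercivity of $a_h$ on all of $V_h$ from \eqref{eq:discretecoercivity} (not merely on the discrete kernel of $b_h$), the discrete inf-sup condition~\eqref{eq:ncfeminfsup}, and the obvious symmetry and positive semi-definiteness of $c(\cdot,\cdot)$, which is a non-negative multiple of a weighted $H^1$ inner product.

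First I would bound $\|\tilde{\bm u}_h\|_{V,h}$ by testing the first discrete equation with $\bm v_h = \tilde{\bm u}_h$. The coercivity \eqref{eq:discretecoercivity} gives $\|\tilde{\bm u}_h\|_{V,h}^2 \lesssim a_h(\tilde{\bm u}_h,\tilde{\bm u}_h)$, and then the two leading terms of the supremum on the right-hand side of \eqref{eq:ncfemstability} (taking $\bm v_h = \tilde{\bm u}_h$, $q_h = 0$) control $a_h(\tilde{\bm u}_h,\tilde{\bm u}_h) + b_h(\tilde{\bm u}_h,\tilde p_h)$; the $b_h(\tilde{\bm u}_h,\tilde p_h)$ contribution can be recycled by using the second equation form (taking $\bm v_h=0$, $q_h = \tilde p_h$), which yields $b_h(\tilde{\bm u}_h,\tilde p_h) = c(\tilde p_h,\tilde p_h) + \text{(sup term)}$ so the absorption is trivial since $c(\tilde p_h,\tilde p_h)\geq 0$.

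Second, I would bound $\|\tilde p_h\|_Q$ using the discrete inf-sup condition \eqref{eq:ncfeminfsup}: for any $\tilde p_h \in Q_h$, choose $\bm w_h \in V_h$ with $\|\bm w_h\|_{V,h}\lesssim \|\tilde p_h\|_Q$ and $b_h(\bm w_h,\tilde p_h)\gtrsim \|\tilde p_h\|_Q^2$. Testing the first discrete equation with $\bm v_h = \bm w_h$ gives
\[
\|\tilde p_h\|_Q^2 \lesssim b_h(\bm w_h,\tilde p_h) = -a_h(\tilde{\bm u}_h,\bm w_h) + \bigl(a_h(\tilde{\bm u}_h,\bm w_h)+b_h(\bm w_h,\tilde p_h)\bigr),
\]
and the first term is bounded by $\|\tilde{\bm u}_h\|_{V,h}\|\bm w_h\|_{V,h}$ (already controlled by the previous step), while the parenthesized term is $\leq$ the supremum in \eqref{eq:ncfemstability} times $\|\bm w_h\|_{V,h}$. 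A Young's inequality absorbs the $\|\tilde{\bm u}_h\|_{V,h}$ contribution into the bound obtained above, yielding \eqref{eq:ncfemstability}.

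For well-posedness, \eqref{eq:discretestraingradmixedform} is a finite-dimensional square linear system, so existence is equivalent to uniqueness; and uniqueness follows immediately from \eqref{eq:ncfemstability} applied to a solution of the homogeneous problem, which forces $\|\tilde{\bm u}_h\|_{V,h}+\|\tilde p_h\|_Q=0$. I expect no genuine obstacle here: the only mild subtlety is that $c(\cdot,\cdot)$ is only semi-definite as $\lambda\to\infty$, but since we only need it to be non-negative to run the saddle-point argument (the $Q$-norm control comes solely from $b_h$ via the inf-sup condition), this causes no loss in the estimate and, importantly, all hidden constants remain independent of $\lambda$ and $\iota$.
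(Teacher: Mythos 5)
Your proof is correct and rests on exactly the same ingredients as the paper's: the discrete coercivity \eqref{eq:discretecoercivity} of $a_h$ on all of $V_h$, the discrete inf-sup condition \eqref{eq:ncfeminfsup}, and the positive semi-definiteness of $c$. The paper simply cites the Babu\v{s}ka--Brezzi theory at this point, whereas you unpack the standard perturbed-saddle-point argument explicitly (and correctly, including the absorption of the $b_h(\tilde{\bm u}_h,\tilde p_h)$ term via $c\geq 0$), so the two proofs are essentially the same.
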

\begin{proof}
	Applying the Babu{\v{s}}ka-Brezzi theory \cite{BoffiBrezziFortin2013}, we get from the discrete coercivity~\eqref{eq:discretecoercivity} and the discrete inf-sup condition \eqref{eq:ncfeminfsup} that
\begin{equation*}
	\|\tilde{\bm u}_h\|_{V,h}+\|\tilde{p}_h\|_Q \lesssim \sup_{\bm v_h\in V_h, q_h\in Q_h}\frac{a_h(\tilde{\bm u}_h, \bm v_h)+b_h(\bm v_h, \tilde{p}_h)-b_h(\tilde{\bm u}_h, q_h)}{\|\bm v_h\|_{V,h}+\|q_h\|_Q}
\end{equation*}
for $\tilde{\bm u}_h\in V_h$ and $\tilde{p}_h\in Q_h$,
	which indicates \eqref{eq:ncfemstability}.
\end{proof}

\section{Error analysis}\label{Sect:Error}
We will analyze the mixed finite element method \eqref{eq:discretestraingradmixedform} in this section, and derive robust error estimates of $\|\bm u - \bm u_h\|_{V,h} + \|p- p_h\|_{Q}$ with respect to parameters $\iota$ and $\lambda$.

\subsection{Interpolation estimates}

Denote by $I^{SZ}_h: H_0^1(\Omega)\to Q_h^L\cap H_0^1(\Omega)$ the Scott-Zhang interpolation operator with the homogeneous boundary condition \cite{ScottZhang1990}. Since $I^{SZ}_hv\not\in Q_h$ for $I^{SZ}_hv\not\in L_0^2(\Omega)$, we will modify $I^{SZ}_hv$.

For $K\in\mathcal T_h$, let $N_K:=\#\mathcal V^i(K)$ be the number of interior vertices of $K$. We assume the triangulation $\mathcal T_h$ satisfies $\min_{K\in\mathcal T_h}N_K\geq1$. Define operator $\mathcal Q_h: L^2(\Omega)\to  Q_h^L\cap H_0^1(\Omega)$ by
\begin{equation*}	
(\mathcal Q_hv)(\delta)=\sum_{K\in\mathcal T_{\delta}}\frac{d+1}{N_K|\omega_{\delta}| }\int_Kv~{\rm d}x\quad\forall~\delta\in\mathcal V^i(\mathcal T_h),
\end{equation*}
where $|\omega_{\delta}|$ is the geometrical measure of $\omega_{\delta}$.

\begin{lemma}
	For $v\in L^2(\Omega)$, we have $v-\mathcal Q_hv\in L_0^2(\Omega)$, and
	\begin{equation}\label{eq:QhL02prop}
	\|\mathcal Q_hv\|_{0,K}+h_K|\mathcal Q_hv|_{1,K} \lesssim \sum_{\delta\in\mathcal V^i(K)}\|v\|_{0,\omega_{\delta}}.
	\end{equation}
\end{lemma}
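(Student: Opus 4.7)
The proof splits naturally into the two assertions of the lemma, and both reduce to direct algebraic manipulations once the right quadrature identity is invoked.

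For the zero-mean property, the plan is to exploit the exact quadrature rule for linear functions on a simplex: $\int_K w\,dx = \frac{|K|}{d+1}\sum_{\delta\in\mathcal V(K)} w(\delta)$ for $w\in\mathbb P_1(K)$. Applied to $\mathcal Q_h v$, and using that $(\mathcal Q_h v)(\delta)=0$ for every boundary vertex (since $\mathcal Q_h v\in Q_h^L\cap H_0^1(\Omega)$), we get
\[
\int_{\Omega}\mathcal Q_h v\,dx=\sum_{K\in\mathcal T_h}\frac{|K|}{d+1}\sum_{\delta\in\mathcal V^i(K)}(\mathcal Q_h v)(\delta)
=\sum_{\delta\in\mathcal V^i(\mathcal T_h)}\frac{1}{d+1}\Bigl(\sum_{K\in\mathcal T_\delta}|K|\Bigr)(\mathcal Q_h v)(\delta).
\]
Since $\sum_{K\in\mathcal T_\delta}|K|=|\omega_\delta|$, and since the definition of $\mathcal Q_h v$ carries the reciprocal factor $(d+1)/(N_K|\omega_\delta|)$, I next plug in the definition and swap the roles of the sums. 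The $|\omega_\delta|$ cancels, leaving
\[
\int_{\Omega}\mathcal Q_h v\,dx=\sum_{\delta\in\mathcal V^i(\mathcal T_h)}\sum_{K\in\mathcal T_\delta}\frac{1}{N_K}\int_K v\,dx
=\sum_{K\in\mathcal T_h}\Bigl(\sum_{\delta\in\mathcal V^i(K)}\frac{1}{N_K}\Bigr)\int_K v\,dx.
\]
The inner sum has exactly $N_K$ terms each equal to $1/N_K$, so it evaluates to $1$, yielding $\int_\Omega\mathcal Q_h v\,dx=\int_\Omega v\,dx$. This is the only nontrivial step; getting the combinatorial factors right (and using the condition $N_K\ge1$ so division by $N_K$ makes sense on every cell) is the subtle point.

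For the stability estimate \eqref{eq:QhL02prop}, the plan is bookkeeping by Cauchy–Schwarz plus standard scaling. From the definition,
\[
|(\mathcal Q_h v)(\delta)|\lesssim\frac{1}{|\omega_\delta|}\sum_{K\in\mathcal T_\delta}|K|^{1/2}\|v\|_{0,K}\lesssim|\omega_\delta|^{-1/2}\|v\|_{0,\omega_\delta}\lesssim h_\delta^{-d/2}\|v\|_{0,\omega_\delta},
\]
using shape regularity $|\omega_\delta|\eqsim h_\delta^d$ and Cauchy–Schwarz in the outer summation. Then, writing $\mathcal Q_h v|_K=\sum_{\delta\in\mathcal V(K)}(\mathcal Q_h v)(\delta)\lambda_\delta$ and recalling that boundary vertices contribute zero, the scalings $\|\lambda_\delta\|_{0,K}\eqsim h_K^{d/2}$ and $|\lambda_\delta|_{1,K}\eqsim h_K^{d/2-1}$ give
\[
\|\mathcal Q_h v\|_{0,K}+h_K|\mathcal Q_h v|_{1,K}\lesssim h_K^{d/2}\!\!\sum_{\delta\in\mathcal V^i(K)}\!\!|(\mathcal Q_h v)(\delta)|\lesssim\!\!\sum_{\delta\in\mathcal V^i(K)}\!\!\|v\|_{0,\omega_\delta},
\]
which is exactly \eqref{eq:QhL02prop}. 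I expect no real obstacle in this part; the main care point is the combinatorial computation in the first part to verify that the chosen normalization constant $(d+1)/(N_K|\omega_\delta|)$ is precisely the one needed for mean preservation.
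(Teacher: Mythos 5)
Your proposal is correct and follows essentially the same route as the paper: the vertex quadrature rule for linears together with the vanishing boundary values, a swap of the sums over elements and interior vertices so the factor $(d+1)/(N_K|\omega_\delta|)$ cancels to give mean preservation, and a scaling/Cauchy--Schwarz argument for the local stability bound. The only cosmetic difference is that you collapse $\sum_{K\in\mathcal T_\delta}|K|=|\omega_\delta|$ before substituting the definition, whereas the paper carries the double sum explicitly, and you obtain the $H^1$-seminorm bound from the scaling of the barycentric basis rather than quoting the inverse inequality; both are equivalent.
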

\begin{proof}
	By the fact $(\mathcal Q_hv)|_K\in\mathbb P_1(K)$ for $K\in\mathcal T_h$,
	\begin{align*}
&\quad	\int_{\Omega}\mathcal Q_hv~{\rm d}x=\sum_{K\in\mathcal T_h}\int_{K}\mathcal Q_hv~{\rm d}x=\frac{1}{d+1}\sum_{K\in\mathcal T_h}\sum_{\delta\in\mathcal V^i(K)}|K|(\mathcal Q_hv)(\delta) \\
&=\frac{1}{d+1}\sum_{\delta\in\mathcal V^i(\mathcal T_h)}\sum_{K\in\mathcal T_{\delta}}|K|(\mathcal Q_hv)(\delta)= \sum_{\delta\in\mathcal V^i(\mathcal T_h)}\sum_{K\in\mathcal T_{\delta}}\sum_{K'\in\mathcal T_{\delta}}\frac{|K|}{N_{K'} |\omega_{\delta}| }\int_{K'}v~{\rm d}x \\
	&= \sum_{\delta\in\mathcal V^i(\mathcal T_h)}\sum_{K'\in\mathcal T_{\delta}}\frac{1}{N_{K'}}\int_{K'}v~{\rm d}x=\int_{\Omega}v~{\rm d}x.	
	\end{align*}
	On the other side, by the scaling argument,
	\[
	\|\mathcal Q_hv\|_{0,K}\lesssim h_K^{d/2}\sum_{\delta\in\mathcal V^i(K)}|(\mathcal Q_hv)(\delta)|\lesssim \sum_{\delta\in\mathcal V^i(K)}\|v\|_{0,\omega_{\delta}},
	\]
	% $$
	% \|\mathcal Q_hv\|_{0,K}\lesssim h_K^{d/2}\sum_{\delta\in\mathcal V^i(K)}|(\mathcal Q_hv)(\delta)|\lesssim h_K^{d/2}\sum_{\delta\in\mathcal V^i(K)}\sum_{K\in\mathcal T_{\delta}}\frac{1}{|\omega_{\delta}|}\int_K|v|~{\rm d}x\lesssim \sum_{\delta\in\mathcal V^i(K)}\|v\|_{0,\omega_{\delta}},
	% $$
	which together with the inverse inequality produces \eqref{eq:QhL02prop}.
\end{proof}

% Define $J_h: Q\to Q_h$ by $J_hv:=I_h^{SZ}v+\mathcal Q_h(v-I_h^{SZ}v)=I_h^{SZ}v-\mathcal Q_h(I_h^{SZ}v)$.

Define $J_h: H_0^1(\Omega)\to Q_h^L\cap H_0^1(\Omega)$ by $J_hv:=I_h^{SZ}v+\mathcal Q_h(v-I_h^{SZ}v)$. Since $\int_{\Omega}J_hv~{\rm d}x=\int_{\Omega}v~{\rm d}x$, it holds that $J_h v \in Q_h$ when $v\in Q$.
% We have
% $$
% \|v-J_hv\|_0\lesssim \|v-I_h^{SZ}v\|_0, |v-J_hv|_1\lesssim |v-I_h^{SZ}v|_1+h^{-1}\|v-I_h^{SZ}v\|_0 \quad\forall~v\in Q.
% $$
% Denote by $J_h: Q\to Q_h$ the Scott-Zhang interpolation operator with the homogenous boundary condition \cite{ScottZhang1990}.

\begin{lemma}
We have
\begin{equation}\label{eq:Jhestimate}
h^i|v-J_hv|_i\lesssim h^j|v|_j \quad\forall~v\in H_0^1(\Omega)\cap H^j(\Omega),\; i\leq j\leq 2,\; i=0,1.
\end{equation}
	% \begin{align}
	% \|v-J_hv\|_0&\lesssim h^j|v|_j \quad\forall~v\in H_0^1(\Omega)\cap H^j(\Omega),\; j=0,1,2, \label{eq:Jhestimate1}
	% \\
	% |v-J_hv|_1&\lesssim h^{j-1}|v|_j \quad\forall~v\in H_0^1(\Omega)\cap H^j(\Omega),\; j=1,2. \label{eq:Jhestimate2}
	% \end{align}
\end{lemma}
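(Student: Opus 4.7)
The plan is to decompose
\[
v - J_h v = (v - I_h^{SZ}v) - \mathcal Q_h(v - I_h^{SZ}v)
\]
and estimate each piece separately via the triangle inequality on each $K \in \mathcal T_h$. For the Scott-Zhang term I would invoke the classical local estimate $|v - I_h^{SZ}v|_{i,K} \lesssim h_K^{j-i}\,|v|_{j,\widetilde K}$ for $0\le i\le j\le 2$, where $\widetilde K$ is the usual Scott-Zhang patch around $K$ (this is valid since $I_h^{SZ}$ preserves piecewise linear functions and $v\in H_0^1$ makes the boundary-respecting version well defined).

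For the correction term, I would apply the already-established estimate \eqref{eq:QhL02prop} with its argument $v$ replaced by $v - I_h^{SZ}v$, giving, for $i=0,1$,
\[
h_K^{i}\,|\mathcal Q_h(v - I_h^{SZ}v)|_{i,K} \lesssim \sum_{\delta\in\mathcal V^i(K)} \|v - I_h^{SZ}v\|_{0,\omega_\delta}.
\]
Each $\omega_\delta$ is a finite union of elements, so applying the $L^2$ Scott-Zhang estimate on each element of $\omega_\delta$ and summing yields $\|v - I_h^{SZ}v\|_{0,\omega_\delta}\lesssim h^{j}\,|v|_{j,\widetilde\omega_\delta}$ for $0\le j\le 2$, with $\widetilde\omega_\delta$ a mildly enlarged patch.

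Combining the two bounds, squaring, summing over $K\in\mathcal T_h$, and multiplying by $h^{2i}$ gives
\[
h^{2i}\,|v - J_h v|_i^2 \lesssim \sum_{K}\bigl(h_K^{2j}|v|_{j,\widetilde K}^2 + h_K^{2(j-i)+2i}\,|v|_{j,\widetilde\omega_K}^2\bigr)\lesssim h^{2j}\,|v|_j^2,
\]
which is \eqref{eq:Jhestimate}. The only nuisance is bookkeeping: one must check that the enlarged patches $\widetilde K$ and $\widetilde\omega_\delta$ still have uniformly bounded overlap so that the double sum over $K$ and $\delta\in\mathcal V^i(K)$ collapses into a single global $H^j$-seminorm. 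This is guaranteed by the shape regularity of $\mathcal T_h$, and is the only place where care is needed; the rest of the argument is a routine splitting plus standard Scott-Zhang estimates.
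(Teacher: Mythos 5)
Your proposal is correct and follows essentially the same route as the paper: the same decomposition $v-J_hv=(v-I_h^{SZ}v)-\mathcal Q_h(v-I_h^{SZ}v)$, the bound \eqref{eq:QhL02prop} applied to the correction term, and the standard local Scott--Zhang estimates combined via bounded overlap of patches. No substantive difference; your version merely spells out the bookkeeping the paper leaves implicit.
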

\begin{proof}
For each $K\in\mathcal T_h$, by \eqref{eq:QhL02prop} and $v-J_hv=v-I_h^{SZ}v-\mathcal Q_h(v-I_h^{SZ}v)$,
\begin{equation*}
h_K^i|v-J_hv|_{i,K}\lesssim h_K^i|v-I_h^{SZ}v|_{i,K} + \sum_{\delta\in\mathcal V^i(K)}\|v-I_h^{SZ}v\|_{0,\omega_{\delta}}.
\end{equation*}
	% \begin{align*}
	% \|v-J_hv\|_{0,K}&=\|v-I_h^{SZ}v-\mathcal Q_h(v-I_h^{SZ}v)\|_{0,K}\lesssim \sum_{\delta\in\mathcal V^i(K)}\|v-I_h^{SZ}v\|_{0,\omega_{\delta}}, \\
	% |v-J_hv|_{1,K} &\lesssim |v-I_h^{SZ}v|_{1,K} + h_K^{-1}\sum_{\delta\in\mathcal V^i(K)}\|v-I_h^{SZ}v\|_{0,\omega_{\delta}}.
	% \end{align*}
Therefore, we acquire \eqref{eq:Jhestimate} from the estimate of $I^{SZ}_h$.
\end{proof}

% \begin{theorem}\label{th:error}
% Let $(\bm u,p)\in (V\cap H^3(\Omega;\mathbb{R}^d)) \times (Q\cap H^2(\Omega))$ and $(\bm u_h,p_h)\in V_h\times Q_h$ be the solution of problem \eqref{eq:straingradproblem} and \eqref{eq:discretestraingradmixedform}, respectively. %If $\bm f \in H^{-1}(\Omega; \mathbb{R}^2)$,
% We have
% \begin{equation}\label{eq:errororder}
%  \|\bm u - \bm u_h\|_{V,h} + \|p - p_h\|_{Q} \lesssim
% %\begin{cases}
% (h^2 + \iota h)|\bm u|_{3} + \lambda (h^2 + \iota h)|\div \bm u|_{2}.
% %\end{cases}
% \end{equation}
% \end{theorem}
% \begin{proof}
% Let $\bm u_I = I_h \bm u\in V_h$, and $p_I=J_hp\in Q_h$. Then by \eqref{eq:Ihestimate3} and \eqref{eq:Jhestimate1}-\eqref{eq:Jhestimate2},
% \begin{equation*}
%    \|\bm u - \bm u_h\|_{V,h} + \|p- p_h\|_{Q}
%   % \leq \| \bm u - \bm I_h u \|_{V,h} + \|p - \Pi_h p\|_{Q} + E_h
%    \lesssim (h^2 + \iota h )|\bm u|_{3} +\lambda(h^2 + \iota h)|\div \bm u|_{2} + E_h.
% \end{equation*}
% Therefore \eqref{eq:errororder} follows from \eqref{eq:Ehestimate1}.
% \end{proof}

To derive robust error estimates of $\|\bm u - \bm u_h\|_{V,h} + \|p- p_h\|_{Q}$ with respect to parameters $\iota$ and $\lambda$,
we introduce a finite element space
\begin{align*}
\tilde V_h:=&\{\bm v_h\in L^2(\Omega;\mathbb R^d)| \, \bm v_h|_K\in V(K) \textrm{ for each } K\in\mathcal T_h, \textrm{ all the  DoFs \eqref{H2lowestncfemnDdof1}-\eqref{H2lowestncfemnDdof4}} \\
&\qquad\qquad\qquad \textrm{ are single-valued}, \textrm{and DoFs \eqref{H2lowestncfemnDdof1}-\eqref{H2lowestncfemnDdof2} on boundary vanish}\}.
\end{align*}
% Let $\tilde J_h: H^1(\Omega)\cap L_0^2(\Omega)\to \tilde Q_h$ be the Scott-Zhang interpolation operator without boundary conditions.
Define an interpolation operator $\tilde I_h: H_0^1(\Omega; \mathbb R^d)\to \tilde V_h$ as follows:
\begin{align}
(\tilde I_h\bm v) (\delta)&=\frac{1}{\#\mathcal T_{\delta}}\sum_{K\in\mathcal T_{\delta}}(I_K^{BDM}\bm v)(\delta),\notag\\
(\tilde I_h\bm v, \bm q)_e&=\frac{1}{\#\mathcal T_{e}}\sum_{K\in\mathcal T_{e}}(I_K^{BDM}\bm v, \bm q)_e  \quad \forall~\bm q\in\mathbb P_{0}(e;\mathbb R^d), \notag\\
(\partial_{\bm{n}}(\Pi_F(\tilde I_h\bm v)), \bm q)_F&=\frac{1}{\#\mathcal T_{F}}\sum_{K\in\mathcal T_{F}}(\partial_{\bm{n}}(\Pi_F(I_K^{BDM}\bm v)), \bm q)_F  \quad \forall~\bm q\in\mathbb P_{0}(F;\mathbb R^{d-1}), \notag\\
(\div(\tilde I_h\bm v), q)_F&=\frac{1}{\#\mathcal T_{F}}\sum_{K\in\mathcal T_{F}}(\div(I_K^{BDM}\bm v), q)_F \quad \forall~q\in\mathbb P_{0}(F), \notag\\
(\tilde I_h\bm v, \bm q)_K&=(\bm v, \bm q)_K  \quad \forall~\bm q\in\mathbb P_{0}(K;\mathbb R^d), \notag
\end{align}
for $\delta\in \mathcal V^i(\mathcal T_h)$, $e\in \mathcal E^i(\mathcal T_h)$, $F\in \mathcal F(\mathcal T_h)$ and $ K\in \mathcal T_h$.

% Denote by $\tilde I^{SZ}_h: H^1(\Omega)\to Q_h^L$ the Scott-Zhang interpolation operator.
% Define $\tilde J_h: Q\to Q_h$ by
% $\tilde J_hv:=\tilde I_h^{SZ}v-\mathcal Q_h(\tilde I_h^{SZ}v)$.
%$\tilde J_hv:=\tilde I_h^{SZ}v+\mathcal Q_h(v-\tilde I_h^{SZ}v)=\tilde I_h^{SZ}v-\mathcal Q_h(\tilde I_h^{SZ}v)$.

\begin{lemma}
	We have
	\begin{align}\label{eq:Ihtildeestimate2}
	|\tilde I_h\bm v|_{1}&\lesssim |\bm v|_1\quad\forall~\bm v\in H_0^1(\Omega;\mathbb R^d),
	\\
\label{eq:Ihtildeestimate3}
	\sum_{i=1}^2h_K^{i}|\bm v-\tilde I_h\bm v|_{i,K}  &\lesssim h_K^j\sum_{\delta\in\mathcal V(K)}|\bm v|_{j, \omega_{\delta}}\;\forall~\bm v\in H_0^1(\Omega;\mathbb R^d)\cap H^j(\Omega;\mathbb R^d), j=2,3.
	\end{align}
\end{lemma}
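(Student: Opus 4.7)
The plan is to mirror the proof of Lemma \ref{lem:Ihestimate}, adapted to the fact that $\tilde V_h$ imposes boundary conditions only on the vertex/edge DoFs \eqref{H2lowestncfemnDdof1}--\eqref{H2lowestncfemnDdof2}, while the face and interior DoFs \eqref{H2lowestncfemnDdof31}--\eqref{H2lowestncfemnDdof4} are treated uniformly by averaging over \emph{all} incident simplices. On each $K\in\mathcal T_h$, set $\bm w_K := \tilde I_h\bm v - I_K^{\rm BDM}\bm v\in V(K)$; the interior DoF \eqref{H2lowestncfemnDdof4} of $\bm w_K$ vanishes by construction, while every other DoF is an average of differences $I_{K'}^{\rm BDM}\bm v - I_K^{\rm BDM}\bm v$ (and, for the divergence face DoF, of $Q_{K'}\div\bm v - Q_K\div\bm v$) over neighboring simplices. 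By norm equivalence on the finite-dimensional space $V(K)$ and an inverse inequality, for $i=1,2$,
\begin{align*}
h_K^{2i}|\bm w_K|_{i,K}^2
&\lesssim h_K\!\!\sum_{\delta\in\mathcal V(K)}\sum_{\substack{F\in\mathcal F(\mathcal T_h)\\ \delta\in F}}\!\!\|\llbracket I_h^{\rm BDM}\bm v\rrbracket\|_{0,F}^2 + h_K^3\!\!\sum_{F\in\mathcal F(K)}\!\!\|\llbracket\partial_{\bm n}(I_h^{\rm BDM}\bm v)\rrbracket\|_{0,F}^2\\
&\quad + h_K^3\!\!\sum_{F\in\mathcal F(K)}\!\!\|\llbracket Q_K\div\bm v\rrbracket\|_{0,F}^2,
\end{align*}
with the one-sided trace understood on boundary faces, since $\tilde V_h$ imposes no boundary condition on these DoFs.

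To conclude \eqref{eq:Ihtildeestimate3}, I will apply the trace inequality together with the known approximation properties of $I_h^{\rm BDM}$ and $Q_K$: for $j=2,3$,
\[
\|\llbracket I_h^{\rm BDM}\bm v\rrbracket\|_{0,F}\lesssim h_F^{j-1/2}|\bm v|_{j,\omega_F},\qquad \|\llbracket\partial_{\bm n}(I_h^{\rm BDM}\bm v)\rrbracket\|_{0,F}+\|\llbracket Q_K\div\bm v\rrbracket\|_{0,F}\lesssim h_F^{j-3/2}|\bm v|_{j,\omega_F}.
\]
Combined with the standard BDM error estimate $h_K^i|\bm v-I_K^{\rm BDM}\bm v|_{i,K}\lesssim h_K^j|\bm v|_{j,K}$ and the triangle inequality $|\bm v-\tilde I_h\bm v|_{i,K}\le|\bm v-I_K^{\rm BDM}\bm v|_{i,K}+|\bm w_K|_{i,K}$, summing over vertex patches of $K$ yields \eqref{eq:Ihtildeestimate3}.

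For the $H^1$-stability \eqref{eq:Ihtildeestimate2}, the displayed bound with $i=1$ still applies, but the jumps of $I_h^{\rm BDM}\bm v$ and $Q_K\div\bm v$ must now be controlled by $|\bm v|_{1,\omega_F}$ rather than $|\bm v|_{2,\omega_F}$. The idea is to subtract a piecewise constant from $\bm v$ on each patch: since both $I_h^{\rm BDM}$ and $Q_K$ reproduce constants, the jumps are invariant under this subtraction, and a Poincar\'e--Friedrichs inequality on each patch (using $\bm v\in H_0^1(\Omega;\mathbb R^d)$ for patches touching $\partial\Omega$) upgrades the resulting $L^2$ patch norm of $\bm v$ minus its average to $h_F|\bm v|_{1,\omega_F}$. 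Combining this with the standard local $H^1$-stability $|I_K^{\rm BDM}\bm v|_{1,K}\lesssim |\bm v|_{1,\omega_K}$ of the BDM interpolant (also obtained by the Bramble--Hilbert technique, since $I_K^{\rm BDM}$ reproduces constants) and the triangle inequality then yields \eqref{eq:Ihtildeestimate2}.

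The main technical obstacle is \eqref{eq:Ihtildeestimate2}: because $\tilde I_h$ sets the vertex and edge-moment DoFs on $\partial\Omega$ to zero, it does \emph{not} globally reproduce nonzero constants, so one cannot invoke a direct Bramble--Hilbert argument on $\tilde I_h$ itself. The constant-preservation of $I_h^{\rm BDM}$ and $Q_K$ at the elementwise level, together with the homogeneous Dirichlet trace of $\bm v$ for boundary patches, is exactly what rescues the patchwise subtraction argument.
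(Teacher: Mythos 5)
Your overall strategy coincides with the paper's: bound $\bm w_K=\tilde I_h\bm v-I_K^{BDM}\bm v$ through its DoFs by norm equivalence on $V(K)$, reduce the DoF differences to jumps of $I_K^{BDM}\bm v$, $\partial_{\bm n}(I_K^{BDM}\bm v)$ and $Q_K(\div\bm v)$ across faces, and finish with the trace inequality, the error estimates of $I_K^{BDM}$ and $Q_K$, and the triangle inequality; your patchwise constant-subtraction argument for \eqref{eq:Ihtildeestimate2} is a correct, if more verbose, version of what the paper does. The gap is in your treatment of boundary faces for the face-type DoFs \eqref{H2lowestncfemnDdof31}--\eqref{H2lowestncfemnDdof32}. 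You sum the normal-derivative and divergence jump terms over all of $\mathcal F(K)$ ``with the one-sided trace understood on boundary faces,'' and then claim $\|\llbracket\partial_{\bm n}(I_h^{BDM}\bm v)\rrbracket\|_{0,F}+\|\llbracket Q_K\div\bm v\rrbracket\|_{0,F}\lesssim h_F^{j-3/2}|\bm v|_{j,\omega_F}$. On a boundary face this is false for $\bm v\in H_0^1(\Omega;\mathbb R^d)\cap H^j(\Omega;\mathbb R^d)$: only the trace of $\bm v$ vanishes on $\partial\Omega$, not $\partial_{\bm n}\bm v$ or $\div\bm v$, so the one-sided quantities $\|\partial_{\bm n}(I_K^{BDM}\bm v)\|_{0,F}$ and $\|Q_K(\div\bm v)\|_{0,F}$ are generically $O(1)$, and the resulting term $h_K^{3}\|\cdot\|_{0,F}^2=O(h_K^{3})$ cannot be absorbed into $h_K^{2j}|\bm v|_{j,\omega_\delta}^2$ for $j=2,3$. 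Your stated justification (``since $\tilde V_h$ imposes no boundary condition on these DoFs'') is exactly backwards.

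The correct observation --- the one the paper uses, cf. the restriction to $\mathcal F^i(K)$ in \eqref{eq:20220118} --- is that the face DoFs of $\tilde I_h\bm v$ on a boundary face $F$ are defined as the average over $\mathcal T_F$, which consists of the single element $K$; hence these DoFs of $\bm w_K$ vanish identically there, and boundary faces simply do not appear in the normal-derivative and divergence sums. (The vertex and edge DoFs behave differently: those are set to zero on $\partial\Omega$, and the resulting one-sided terms are harmless because $\bm v$ itself vanishes on $\partial\Omega$, so $\|I_K^{BDM}\bm v\|_{0,F}=\|I_K^{BDM}\bm v-\bm v\|_{0,F}$.) This distinction is not cosmetic: it is the entire reason $\tilde I_h$ is introduced alongside $I_h$, namely to avoid the boundary-layer contribution $\|\partial_{\bm n}\bm v\|_{0,\partial\Omega}$ for functions that are only in $H_0^1$. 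With the face sums restricted to interior faces your argument for \eqref{eq:Ihtildeestimate3} goes through; the $H^1$-stability \eqref{eq:Ihtildeestimate2} is unaffected either way, since there the crude bounds $\|\partial_{\bm n}(I_K^{BDM}\bm v)\|_{0,F}\lesssim h_K^{-1/2}|\bm v|_{1,K}$ and $\|Q_K(\div\bm v)\|_{0,F}\lesssim h_K^{-1/2}|\bm v|_{1,K}$ already suffice.
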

\begin{proof}
	Applying the similar argument as Lemma~\ref{lem:Ihestimate},
	we get for $i=1,2$ that
	\begin{align}
	&h_K^{2i}|\tilde I_h\bm v - I_K^{BDM}\bm v|_{i,K}^2\lesssim h_K\sum_{\delta\in \mathcal V(K)}\sum_{F\in \mathcal F(T_h),\delta\in F}\|\llbracket I_K^{BDM}\bm v\rrbracket\|_{0,F}^2 \notag\\
	&\qquad\qquad\quad +h_K^3\sum_{F\in \mathcal F^i(K)}\|\llbracket \partial_{\bm{n}}(I_K^{BDM}\bm v)\rrbracket\|_{0,F}^2+ h_K^{3}\sum_{F\in \mathcal F^i(K)}\|\llbracket Q_K(\div\bm v)\rrbracket\|_{0,F}^2. \label{eq:20220118}
	\end{align}
	Employing the inverse inequality, the trace inequality, the error estimates of $I_K^{BDM}$ and $Q_K$,
	\[
	|\tilde I_h\bm v - I_K^{BDM}\bm v|_{1,K}\lesssim \sum_{\delta\in\mathcal V(K)}|\bm v|_{1, \omega_{\delta}},
	\]
	which combined with the $H^1$ boundedness of $I_K^{BDM}$ yields \eqref{eq:Ihtildeestimate2}.
	
	On the other hand, by \eqref{eq:20220118}, we get
	\begin{align*}
	h_K^{2i}|\bm v-\tilde I_h\bm v|_{i,K}^2&\lesssim h_K^{2i}|\bm v-I_K^{BDM}\bm v|_{i,K}^2 + h_K\sum_{\delta\in \mathcal V(K)}\sum_{F\in \mathcal F(T_h),\delta\in F}\|\llbracket I_K^{BDM}\bm v\rrbracket\|_{0,F}^2 \\
	&\quad +h_K^3\sum_{F\in \mathcal F^i(K)}\|\llbracket \partial_{\bm{n}}(I_K^{BDM}\bm v)\rrbracket\|_{0,F}^2+ h_K^{3}\sum_{F\in \mathcal F^i(K)}\|\llbracket Q_K(\div\bm v)\rrbracket\|_{0,F}^2.
	\end{align*}
	Then \eqref{eq:Ihtildeestimate3} follows from the inverse inequality, the trace inequality, and the error estimates of $I_K^{BDM}$ and $Q_K$.
\end{proof}

\begin{lemma}
	We have
	\begin{equation}\label{eq:Ihtildeu}
	|\bm u-\tilde I_h\bm u|_1+\iota|\bm u-\tilde I_h\bm u|_{2,h}\lesssim h^{1/2}\|\bm f\|_0.
	\end{equation}
\end{lemma}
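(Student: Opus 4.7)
The plan is to split the bound into its two pieces and handle each by combining the interpolation estimates \eqref{eq:Ihtildeestimate2}--\eqref{eq:Ihtildeestimate3} with the regularity estimates \eqref{eq:u0regularity} and \eqref{eq:straingradregularity1}. In each piece, the key observation is that a ``small $h$'' bound (which loses powers of $\iota$) and an ``$\iota$-safe'' bound (which is independent of derivatives whose norms blow up as $\iota\to 0$) together interpolate to $h^{1/2}$.

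For the second-order term, I would simply apply \eqref{eq:Ihtildeestimate3} with both available exponents. Taking $j=2$ gives
\begin{equation*}
\iota|\bm u-\tilde I_h\bm u|_{2,h}\lesssim \iota|\bm u|_2\lesssim \iota^{1/2}\|\bm f\|_0,
\end{equation*}
whereas taking $j=3$ gives
\begin{equation*}
\iota|\bm u-\tilde I_h\bm u|_{2,h}\lesssim \iota h|\bm u|_3=h\iota^{-1}(\iota^2|\bm u|_3)\lesssim h\iota^{-1/2}\|\bm f\|_0.
\end{equation*}
Using $\min(a,b)\le\sqrt{ab}$ on the two right-hand-side prefactors yields $\sqrt{\iota^{1/2}\cdot h\iota^{-1/2}}=h^{1/2}$, so $\iota|\bm u-\tilde I_h\bm u|_{2,h}\lesssim h^{1/2}\|\bm f\|_0$.

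For the first-order term, the direct estimate from \eqref{eq:Ihtildeestimate3} with $j=2$ gives $|\bm u-\tilde I_h\bm u|_{1}\lesssim h|\bm u|_2\lesssim h\iota^{-1/2}\|\bm f\|_0$, which already beats $h^{1/2}$ whenever $\iota\ge h$. To cover the regime $\iota<h$, I would introduce the linear elasticity solution $\bm u_0$ from \eqref{eq:elasticity}, write $\bm u-\tilde I_h\bm u=(\bm u-\bm u_0)-\tilde I_h(\bm u-\bm u_0)+(\bm u_0-\tilde I_h\bm u_0)$, and use the $H^1$-stability of $\tilde I_h$ in \eqref{eq:Ihtildeestimate2} together with \eqref{eq:Ihtildeestimate3} applied to $\bm u_0$ to get
\begin{equation*}
|\bm u-\tilde I_h\bm u|_{1}\lesssim |\bm u-\bm u_0|_1+h|\bm u_0|_2\lesssim (\iota^{1/2}+h)\|\bm f\|_0,
\end{equation*}
where the last inequality uses \eqref{eq:straingradregularity1} for $|\bm u-\bm u_0|_1$ and \eqref{eq:u0regularity} for $|\bm u_0|_2$. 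Taking the minimum of the two bounds, and noting $\iota^{1/2}\le h^{1/2}$ when $\iota\le h\le 1$, produces $|\bm u-\tilde I_h\bm u|_{1}\lesssim h^{1/2}\|\bm f\|_0$. Adding the two estimates concludes \eqref{eq:Ihtildeu}.

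The only delicate step is the handling of $|\bm u-\tilde I_h\bm u|_1$ when $\iota\ll h$: a naive application of \eqref{eq:Ihtildeestimate3} to $\bm u$ degenerates because $|\bm u|_2\sim \iota^{-1/2}\|\bm f\|_0$ is unbounded in $\iota$. The decomposition against $\bm u_0$ is essential precisely because $\bm u_0$ has the $\iota$-independent $H^2$ regularity $|\bm u_0|_2\lesssim\|\bm f\|_0$, while the difference $\bm u-\bm u_0$ is measured only in $H^1$ and carries the benign factor $\iota^{1/2}$.
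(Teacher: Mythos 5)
Your proof is correct and follows essentially the same route as the paper: the displacement is split against the linear elasticity solution $\bm u_0$ for the $H^1$ part, and the $j=2$ and $j=3$ approximation estimates are balanced against the regularity bounds for the $\iota|\cdot|_{2,h}$ part. The only difference is cosmetic --- where the paper invokes the multiplicative interpolated bounds $h^{1/2}|\cdot|_1^{1/2}|\cdot|_2^{1/2}$ and $h^{1/2}|\cdot|_2^{1/2}|\cdot|_3^{1/2}$ directly, you apply the two endpoint estimates separately and take the geometric mean (or a case split on $\iota$ versus $h$) of the resulting scalar bounds, which yields the same $h^{1/2}$ rate.
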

\begin{proof}
	By \eqref{eq:Ihtildeestimate2}-\eqref{eq:Ihtildeestimate3} and \eqref{eq:u0regularity}-\eqref{eq:straingradregularity1},
	\begin{align*}
	|\bm u-\bm u_0-\tilde I_h(\bm u-\bm u_0)|_1\lesssim h^{1/2}|\bm u-\bm u_0|_1^{1/2}|\bm u-\bm u_0|_2^{1/2}\lesssim h^{1/2}\|\bm f\|_0.
	\end{align*}
	And it follows from \eqref{eq:Ihtildeestimate3} and \eqref{eq:u0regularity} that
	\[
	|\bm u_0-\tilde I_h\bm u_0|_1\lesssim h|\bm u_0|_2\lesssim h\|\bm f\|_0.
	\]
	Combining the last two inequalities gives
	\[
	|\bm u-\tilde I_h\bm u|_1\lesssim h^{1/2}\|\bm f\|_0.
	\]
	According to \eqref{eq:Ihtildeestimate3} and \eqref{eq:straingradregularity1}, we have
	\[
	|\bm u-\tilde I_h\bm u|_{2,h}\lesssim h^{1/2}|\bm u|_2^{1/2}|\bm u|_3^{1/2}\lesssim \iota^{-1}h^{1/2}\|\bm f\|_0.
	\]
	Therefore, \eqref{eq:Ihtildeu} holds from the last two inequalities.
\end{proof}

\begin{lemma}
We have
\begin{align}\label{eq:Ihu}
|\bm u-I_h\bm u|_1+\iota|\bm u-I_h\bm u|_{2,h}&\lesssim h^{1/2}\|\bm f\|_0,
\\
\label{eq:Jhp}
\|p-J_hp\|_0+\iota|p-J_hp|_{1}&\lesssim h^{1/2}\|\bm f\|_0.
\end{align}
\end{lemma}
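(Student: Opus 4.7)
I prove the two inequalities separately. For \eqref{eq:Jhp} the strategy is direct geometric-mean interpolation for the $H^1$-seminorm piece and a decomposition $p = p_0 + (p - p_0)$ with $p_0 := \lambda\div\bm u_0$ for the $L^2$ piece. For \eqref{eq:Ihu} I pass through $\tilde I_h\bm u$ as intermediary, controlling the intermediate error by \eqref{eq:Ihtildeu} and estimating the boundary remainder $w_h := \tilde I_h\bm u - I_h\bm u$ uniformly in $\iota$.

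For the pressure estimate, the $H^1$-seminorm piece needs no decomposition: \eqref{eq:Jhestimate} supplies both $|p - J_h p|_1 \lesssim |p|_1$ and $|p - J_h p|_1 \lesssim h|p|_2$, so the elementary bound $|p - J_h p|_1 \lesssim h^{1/2}|p|_1^{1/2}|p|_2^{1/2}$, together with $|p|_1 \lesssim \iota^{-1/2}\|\bm f\|_0$ and $|p|_2 \lesssim \iota^{-3/2}\|\bm f\|_0$ from Theorem~\ref{tm:upregularity}, yields $\iota|p - J_h p|_1 \lesssim h^{1/2}\|\bm f\|_0$. For the $L^2$ piece, \eqref{eq:u0regularity} gives $|p_0|_1 \lesssim \|\bm f\|_0$, whence $\|p_0 - J_h p_0\|_0 \lesssim h\|\bm f\|_0 \le h^{1/2}\|\bm f\|_0$; Theorem~\ref{tm:upregularity} provides $\|p - p_0\|_0 \lesssim \iota^{1/2}\|\bm f\|_0$ and $|p - p_0|_1 \lesssim \iota^{-1/2}\|\bm f\|_0$, and the same geometric-mean argument applied to $p - p_0$ gives $\|(p - p_0) - J_h(p - p_0)\|_0 \lesssim h^{1/2}\|p - p_0\|_0^{1/2}|p - p_0|_1^{1/2} \lesssim h^{1/2}\|\bm f\|_0$.

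For the displacement estimate, a direct comparison of the two interpolation formulas shows $w_h \in \tilde V_h$ has vanishing DoFs of types \eqref{H2lowestncfemnDdof1}--\eqref{H2lowestncfemnDdof2} and \eqref{H2lowestncfemnDdof4}, and of \eqref{H2lowestncfemnDdof31}--\eqref{H2lowestncfemnDdof32} on interior faces; since $\bm u \in H_0^2(\Omega;\mathbb R^d)$ forces $\partial_{\bm n}\bm u|_{\partial\Omega} = \bm 0$ and $\div\bm u|_{\partial\Omega} = 0$, the only possibly nonzero DoFs reduce, on each boundary face $F$ with unique adjacent $K$, to $\frac{1}{|F|}(\partial_{\bm n}((I_K^{BDM}\bm u - \bm u)\cdot\bm t_i), 1)_F$ and $\frac{1}{|F|}(\div(I_K^{BDM}\bm u - \bm u), 1)_F$. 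The $H^2$-seminorm term $\iota|w_h|_{2,h}$ is handled directly by geometric mean of the two bounds $|w_h|_{2,h} \lesssim |\bm u|_2$ (triangle inequality with the $H^2$-stabilities of $\tilde I_h$ and $I_h$, which follow from \eqref{eq:Ihtildeestimate3} and \eqref{eq:Ihestimate1}) and $|w_h|_{2,h} \lesssim h|\bm u|_3$ (from \eqref{eq:Ihestimate3} and its $\tilde I_h$-counterpart): $\iota|w_h|_{2,h} \lesssim \iota h^{1/2}|\bm u|_2^{1/2}|\bm u|_3^{1/2} \lesssim h^{1/2}\|\bm f\|_0$ via Lemma~\ref{lm:uregularity}.

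The crux is the first-seminorm bound $|w_h|_1 \lesssim h^{1/2}\|\bm f\|_0$: the analogous geometric mean of $|w_h|_1 \lesssim |\bm u|_1$ and $|w_h|_1 \lesssim h|\bm u|_2$ leaves a spurious factor $\iota^{-1/4}$, so I split $\bm u = \bm u_0 + (\bm u - \bm u_0)$ \emph{inside} each boundary DoF. The $\bm u_0$-contribution is bounded via the BDM $H^2$-approximation plus scaling, giving $|w_h^0|_1 \lesssim h|\bm u_0|_2 \lesssim h\|\bm f\|_0$ by \eqref{eq:u0regularity}. For the boundary-layer contribution $\bm u - \bm u_0 \in H_0^1\cap H^2(\Omega;\mathbb R^d)$ I derive two independent local estimates: the usual $|w_h^{bl}|_{1, K} \lesssim h_K|\bm u - \bm u_0|_{2, K}$ from the trace inequality plus BDM $H^2$-approximation, and---crucially---a second estimate $|w_h^{bl}|_{1, K} \lesssim |\bm u - \bm u_0|_{1, K}$ obtained by rewriting $\int_F \partial_{\bm n}(I_K^{BDM}(\bm u - \bm u_0)\cdot\bm t_i) \, dS$, via the divergence theorem on $K$, as a volume integral of $\Delta(\cdots)$ plus interior-face terms, each controlled through an inverse inequality and the $H^1$-stability of $I_K^{BDM}$. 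Taking the geometric mean of the two local estimates, summing over boundary elements, and invoking Lemma~\ref{lm:uregularity} yields $|w_h^{bl}|_1 \lesssim h^{1/2}|\bm u - \bm u_0|_1^{1/2}|\bm u - \bm u_0|_2^{1/2} \lesssim h^{1/2}\|\bm f\|_0$ in the exact style of the proof of \eqref{eq:Ihtildeu}, and the identical argument applies to the $\div$-type DoF. Adding the $\bm u_0$- and boundary-layer contributions and combining with \eqref{eq:Ihtildeu} through the triangle inequality completes the proof of \eqref{eq:Ihu}.
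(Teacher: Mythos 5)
Your proof follows the paper's route almost exactly: both estimates are reduced, via the triangle inequality and \eqref{eq:Ihtildeu}, to the boundary-face remainder $w_h=\tilde I_h\bm u-I_h\bm u$, whose nonzero DoFs you identify correctly, and the decisive idea --- splitting $\bm u=\bm u_0+(\bm u-\bm u_0)$ inside those boundary DoFs so as to land on $|w_h|_1^2\lesssim h\,|\bm u-\bm u_0|_1|\bm u-\bm u_0|_2+h^2|\bm u_0|_2^2\lesssim h\|\bm f\|_0^2$ --- is the paper's; your treatment of \eqref{eq:Jhp}, which the paper dismisses as ``similar,'' is also the intended one and is fine. The one step I would not accept as written is the ``second independent local estimate'' $|w_h^{bl}|_{1,K}\lesssim|\bm u-\bm u_0|_{1,K}$. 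The boundary-layer DoF is $\frac1{|F|}\big(\partial_{\bm n}\Pi_F\big(I_K^{BDM}(\bm u-\bm u_0)-(\bm u-\bm u_0)\big),\bm q\big)_F$; the subtracted $(\bm u-\bm u_0)$ cannot be discarded, because $\partial_{\bm n}(\bm u-\bm u_0)|_{\partial\Omega}=-\partial_{\bm n}\bm u_0|_{\partial\Omega}\neq\bm 0$ and is precisely what cancels the $O(1)$ trace left over from the $\bm u_0$-part. Your divergence-theorem/inverse-inequality argument controls only the polynomial half $I_K^{BDM}(\bm u-\bm u_0)$; for the genuine $H^2$ half the trace on $F$ cannot be bounded by $h_F^{-1/2}|\bm u-\bm u_0|_{1,K}$ alone --- the trace inequality unavoidably produces the extra term $\|\nabla(\bm u-\bm u_0)\|_{0,K}^{1/2}|\bm u-\bm u_0|_{2,K}^{1/2}$. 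The repair is exactly what the paper does: instead of manufacturing two separate bounds and taking their geometric mean, apply the multiplicative trace inequality once to $g=\nabla\big((\bm u-\bm u_0)-I_K^{BDM}(\bm u-\bm u_0)\big)$, giving $h_F\|g\|_{0,F}^2\lesssim h_K|\bm u-\bm u_0|_{1,K}|\bm u-\bm u_0|_{2,K}$ directly; this yields the same product you were aiming for, so the conclusion of your argument stands once that step is replaced.
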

\begin{proof}
We only prove \eqref{eq:Ihu}, since the proof of \eqref{eq:Jhp} is similar.
By the definitions of $I_h$ and $\tilde I_h$, we get from the multiplicative trace inequality, the estimate of $I_h^{BDM}$ and \eqref{eq:u0regularity}-\eqref{eq:straingradregularity1} that
\begin{align*}
&|\tilde I_h\bm u-I_h\bm u|_1^2\lesssim \sum_{F\in\mathcal F^{\partial}(\mathcal T_h)}h_F\|\nabla(\bm u-I_h^{BDM}\bm u)\|_{0,F}^2 \\
\lesssim& \sum_{F\in\mathcal F^{\partial}(\mathcal T_h)}h_F\big(\|\nabla(\bm u-\bm u_0-I_h^{BDM}(\bm u-\bm u_0))\|_{0,F}^2 + \|\nabla(\bm u_0-I_h^{BDM}\bm u_0)\|_{0,F}^2\big)	\\
\lesssim& h|\bm u-\bm u_0|_{1}|\bm u-\bm u_0|_{2}+h^2|\bm u_0|_{2}^2\lesssim h\|\bm f\|_{0}^2.
\end{align*}
	Similarly, we have
	\begin{align*}
	|\tilde I_h\bm u-I_h\bm u|_{2,h}^2&\lesssim \sum_{F\in\mathcal F^{\partial}(\mathcal T_h)}h_F^{-1}\|\nabla(\bm u-I_h^{BDM}\bm u)\|_{0,F}^2 \lesssim h|\bm u|_{2}|\bm u|_{3}\lesssim \iota^{-2}h\|\bm f\|_{0}^2.
	\end{align*}
	Hence
\[
	|\tilde I_h\bm u-I_h\bm u|_1+\iota|\tilde I_h\bm u-I_h\bm u|_{2,h}\lesssim h^{1/2}\|\bm f\|_0.
\]
	Finally, \eqref{eq:Ihu} holds from \eqref{eq:Ihtildeu} and the last inequality.
\end{proof}

\subsection{Error estimates}%MardalTaiWinther2001,
Similar to \cite{NilssenTaiWinther2001}, we have the following abstract error estimates.
\begin{lemma}\label{lm:errorup}
	Let $(\bm u,p)\in V\times Q$ and $(\bm u_h,p_h)\in V_h\times Q_h$ be the solution of problem~\eqref{straingradLameproblem} and problem \eqref{eq:discretestraingradmixedform}, respectively. Then
	% for any $\bm u_I\in V_h$ and $p_I\in Q_h$,
	\begin{equation*}
	\|\bm u - \bm u_h\|_{V,h} + \|p- p_h\|_{Q}\lesssim \inf_{\bm u_I\in V_h} \| \bm u - \bm u_I \|_{V,h} + \inf_{p_I\in Q_h}\|p - p_I\|_{Q} + E_h  ,
	\end{equation*}
	where
	\[
	E_h = \sup_{\bm v_h\in\bm V_h}\frac{ (\bm f,\bm v_h) - a_h(\bm u, \bm v_h) - b_h(\bm v_h ,p) }{\|\bm v_h\|_{V,h}}.
	\]
\end{lemma}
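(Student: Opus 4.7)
The plan is a standard Strang-type argument built on top of the discrete stability \eqref{eq:ncfemstability}. Fix arbitrary $\bm u_I \in V_h$ and $p_I \in Q_h$, and apply \eqref{eq:ncfemstability} to the pair $(\bm u_I - \bm u_h, \, p_I - p_h) \in V_h \times Q_h$. This reduces everything to estimating the numerator
\[
\mathcal N(\bm v_h, q_h) := a_h(\bm u_I - \bm u_h, \bm v_h) + b_h(\bm v_h, p_I - p_h) - b_h(\bm u_I - \bm u_h, q_h) + c(p_I - p_h, q_h)
\]
uniformly in $(\bm v_h, q_h) \in V_h \times Q_h$, and then finishing by the triangle inequality $\|\bm u - \bm u_h\|_{V,h} \leq \|\bm u - \bm u_I\|_{V,h} + \|\bm u_I - \bm u_h\|_{V,h}$ (and similarly for $p$) and taking the infimum over $\bm u_I, p_I$.

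First I would use the discrete equations \eqref{eq:discretestraingradmixedform} to eliminate $\bm u_h$ and $p_h$, replacing $a_h(\bm u_h, \bm v_h) + b_h(\bm v_h, p_h)$ by $(\bm f, \bm v_h)$ and $b_h(\bm u_h, q_h) - c(p_h, q_h)$ by $0$. Then I would add and subtract the exact solution $(\bm u, p)$, splitting $\mathcal N$ as
\[
\mathcal N(\bm v_h, q_h) = a_h(\bm u_I - \bm u, \bm v_h) + b_h(\bm v_h, p_I - p) - b_h(\bm u_I - \bm u, q_h) + c(p_I - p, q_h) + R(\bm v_h, q_h),
\]
where the remainder
\[
R(\bm v_h, q_h) = \bigl[a_h(\bm u, \bm v_h) + b_h(\bm v_h, p) - (\bm f, \bm v_h)\bigr] - \bigl[b_h(\bm u, q_h) - c(p, q_h)\bigr].
\]
The first bracket is controlled by $E_h \, \|\bm v_h\|_{V,h}$ by definition, while the second bracket vanishes: since $\bm u \in H_0^2(\Omega; \mathbb R^d)$ and $q_h \in Q \subset H_0^1(\Omega)$, the broken form $b_h(\bm u, q_h)$ coincides with the continuous $b(\bm u, q_h)$, so the continuous identity $b(\bm u, q_h) - c(p, q_h) = 0$ from \eqref{eq:straingradmixedform} applies verbatim.

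The remaining four terms are handled by straightforward continuity: $a_h$ and $b_h$ are bounded on $V_h \times V_h$ and $V_h \times Q_h$ in the norms $\|\cdot\|_{V,h}$ and $\|\cdot\|_Q$ with constants independent of $h, \iota, \lambda$, and $c$ is bounded on $Q \times Q$ with continuity constant $1/\lambda \leq 1$ (for $\lambda$ bounded away from $0$, which is the relevant regime). Applying Cauchy--Schwarz and collecting, one obtains
\[
\mathcal N(\bm v_h, q_h) \lesssim \bigl(\|\bm u - \bm u_I\|_{V,h} + \|p - p_I\|_Q + E_h\bigr)\bigl(\|\bm v_h\|_{V,h} + \|q_h\|_Q\bigr),
\]
which, combined with \eqref{eq:ncfemstability}, the triangle inequality, and the infimum over $\bm u_I, p_I$, yields the claim. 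The only mildly delicate point is the vanishing of the second bracket in $R$, which hinges on $V_h \not\subset H^2$ but $Q_h \subset H^1_0$, so that only the primal (not the constraint) equation produces a genuine consistency error $E_h$; all the remaining work is routine continuity estimation.
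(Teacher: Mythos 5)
Your proposal is correct and follows essentially the same route as the paper: apply the discrete stability \eqref{eq:ncfemstability} to $(\bm u_h-\bm u_I,\,p_h-p_I)$, use the discrete equations to rewrite the numerator, and split it into the consistency term $(\bm f,\bm v_h)-a_h(\bm u,\bm v_h)-b_h(\bm v_h,p)$ (bounded by $E_h\|\bm v_h\|_{V,h}$) plus continuity terms in $\bm u-\bm u_I$ and $p-p_I$, with the constraint-equation residual vanishing because $b_h(\bm u,q_h)=b(\bm u,q_h)$ for $\bm u\in H_0^2(\Omega;\mathbb R^d)$ and $Q_h\subset Q$. This matches the paper's proof step for step, up to an immaterial sign convention in the error pair.
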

\begin{proof}
	Let $\tilde{\bm u}_h=\bm u_h-\bm u_I$ and $\tilde{p}_h=p_h-p_I$. It follows from the mixed finite element method \eqref{eq:discretestraingradmixedform} and the second equation of \eqref{eq:straingradmixedform} that
	\begin{align*}
	& a_h(\tilde{\bm u}_h, \bm v_h)+b_h(\bm v_h, \tilde{p}_h)-b_h(\tilde{\bm u}_h, q_h)+c(\tilde{p}_h,q_h) \\
	=&(\bm f, \bm v_h) - a_h(\bm u_I, \bm v_h)-b_h(\bm v_h, p_I)+b_h(\bm u_I, q_h)-c(p_I,q_h) \\
	=&(\bm f,\bm v_h) - a_h(\bm u, \bm v_h) - b_h(\bm v_h ,p) + a_h(\bm u-\bm u_I, \bm v_h)+b_h(\bm v_h, p-p_I)\\
	&-b_h(\bm u-\bm u_I, q_h)+c(p-p_I,q_h),
	\end{align*}
	which together with the discrete stability \eqref{eq:ncfemstability} ends the proof.
\end{proof}

\begin{lemma}
	We have
	\begin{align}
	E_h&\lesssim \iota h(|\bm u|_3+\lambda|\div\bm u|_2), \label{eq:Ehestimate1}
	\\
	E_h&\lesssim h^{1/2}\|\bm f\|_{0}. \label{eq:Ehestimate2}
	\end{align}
\end{lemma}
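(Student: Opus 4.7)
The starting point is a consistency identity obtained by element-wise integration by parts applied to the strong form $-\div\big((\bm I - \iota^2\Delta)\bm\sigma(\bm u)\big) = \bm f$. Since $V_h \subset H_0^1(\Omega;\mathbb R^d)$, the $\bm I$-part of the stress tensor produces no interior-face residues after one integration by parts; for the $\iota^2\Delta$-part a first integration by parts is also clean, but the second leaves the unclosed boundary contribution $\iota^2\int_{\partial K}\partial_{\bm n}\bm\sigma(\bm u):\nabla\bm v_h$. A short computation, using the symmetry of $\bm\varepsilon(\bm u)$ to rewrite $\nabla\bm\varepsilon(\bm u):\nabla^2\bm v_h = \nabla\bm\varepsilon(\bm u):\nabla\bm\varepsilon(\bm v_h)$, the identity $(\nabla p \otimes \bm I):\nabla^2 \bm v_h = \nabla p \cdot \nabla\div\bm v_h$, and $p = \lambda\div\bm u$, collapses the bulk terms into $a_h(\bm u,\bm v_h) + b_h(\bm v_h, p)$, yielding
\begin{equation*}
(\bm f,\bm v_h) - a_h(\bm u,\bm v_h) - b_h(\bm v_h, p) = -\iota^2\sum_{F\in\mathcal F(\mathcal T_h)}\int_F \partial_{\bm n}\bm\sigma(\bm u):\llbracket\nabla\bm v_h\rrbracket\,{\rm d}S.
\end{equation*}
This rewriting is the only delicate step; everything downstream reduces to trace/Poincar\'e machinery combined with the regularity of Section~\ref{Sect:regularity}.

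For \eqref{eq:Ehestimate1}, I would exploit the weak continuity \eqref{eq:weakcontinuity} to subtract the face mean $\overline{\partial_{\bm n}\bm\sigma(\bm u)}^F$ on each $F$ without altering the integral. A standard chain $\|g-\overline{g}^F\|_{0,F}\lesssim h_K^{1/2}\|\nabla g\|_{0,K}$ applied with $g=\partial_{\bm n}\bm\sigma(\bm u)$ gives $\|\partial_{\bm n}\bm\sigma(\bm u)-\overline{\partial_{\bm n}\bm\sigma(\bm u)}^F\|_{0,F}\lesssim h_K^{1/2}(\mu|\bm u|_{3,K}+\lambda|\div\bm u|_{2,K})$. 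For the jump, Poincar\'e on $F$ (legitimate by the mean-zero property) together with the discrete trace $\|\nabla_h^2\bm v_h\|_{0,F}\lesssim h_K^{-1/2}|\bm v_h|_{2,K}$ for the polynomial pieces yields $\|\llbracket\nabla\bm v_h\rrbracket\|_{0,F}\lesssim h_K^{1/2}|\bm v_h|_{2,\omega_F}$. A Cauchy--Schwarz over faces then bounds the right-hand side of the identity by $\iota^2 h(|\bm u|_3+\lambda|\div\bm u|_2)|\bm v_h|_{2,h}$, and absorbing $\iota|\bm v_h|_{2,h}\le\|\bm v_h\|_{V,h}$ delivers \eqref{eq:Ehestimate1}.

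For \eqref{eq:Ehestimate2}, I plan to merge two independent bounds on $E_h$. Inserting the regularity results of Lemma~\ref{lm:uregularity} and Theorem~\ref{tm:upregularity} (which give $\iota^2|\bm u|_3+\lambda\iota^2|\div\bm u|_2\lesssim \iota^{1/2}\|\bm f\|_0$) into \eqref{eq:Ehestimate1} produces the first bound $E_h\lesssim h\iota^{-1/2}\|\bm f\|_0$. For the complementary bound I would redo the face-sum estimate keeping only the mean-zero subtraction on the $\bm u$-side and replacing the jump estimate by the direct discrete trace $\|\llbracket\nabla\bm v_h\rrbracket\|_{0,F}\lesssim h_K^{-1/2}|\bm v_h|_{1,\omega_F}$; the face sum then collapses to $\iota^2|\bm\sigma(\bm u)|_2|\bm v_h|_1$, producing $E_h\lesssim \iota^{1/2}\|\bm f\|_0$ after another application of the regularity. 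Finally, the elementary inequality $\min(h\iota^{-1/2},\iota^{1/2})\le (h\iota^{-1/2}\cdot\iota^{1/2})^{1/2}=h^{1/2}$ closes the argument.
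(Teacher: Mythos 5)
Your proposal is correct, and for \eqref{eq:Ehestimate1} it coincides with the paper's argument: the same elementwise integration by parts yields the face residual $-\iota^2\sum_F\int_F\partial_{\bm n}\bm\sigma(\bm u):\llbracket\nabla\bm v_h\rrbracket\,{\rm d}S$, and subtracting face means (justified by the weak continuity \eqref{eq:weakcontinuity}) gives the $O(\iota h)$ bound exactly as in the paper, which subtracts $Q_F^0$ from both factors. For \eqref{eq:Ehestimate2} your route differs in where the interpolation happens. The paper stays with the $H^2$-type test-function factor $h^{1/2}|\bm v_h|_{2,h}$ on every face but applies a \emph{multiplicative} trace inequality to the $\bm u$-factor, producing the single intermediate bound $E_h\lesssim\iota h^{1/2}(|\bm u|_2+\lambda|\div\bm u|_1)^{1/2}(|\bm u|_3+\lambda|\div\bm u|_2)^{1/2}$, and then inserts \eqref{eq:straingradregularity1}--\eqref{eq:straingradregularity2}. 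You instead derive two separate endpoint bounds, $E_h\lesssim h\iota^{-1/2}\|\bm f\|_0$ (from \eqref{eq:Ehestimate1} plus regularity) and $E_h\lesssim\iota^2|\bm\sigma(\bm u)|_2\lesssim\iota^{1/2}\|\bm f\|_0$ (by trading the $h^{1/2}|\bm v_h|_{2,h}$ face factor for $h^{-1/2}|\bm v_h|_{1,\omega_F}$ via the discrete trace inequality, legitimate since $V(K)$ consists of polynomials), and conclude with $\min(a,b)\le\sqrt{ab}$. Both implementations encode the same interpolation between the $\iota^{-1/2}$-scaled $H^2$ and $\iota^{-3/2}$-scaled $H^3$ regularity of $\bm u$ and land on $h^{1/2}\|\bm f\|_0$; yours avoids the multiplicative trace inequality at the price of a second pass through the face sum, and arguably makes the $h$-versus-$\iota$ trade-off more transparent.
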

\begin{proof}
	For $\bm v_h\in V_h\subset H_0^1(\Omega;\mathbb R^d)$,
	apply integration by parts to the first equation in problem \eqref{eq:straingradproblem}
	to acquire
	\begin{align}
	&(\bm f,\bm v_h) - a_h(\bm u, \bm v_h) - b_h(\bm v_h ,p) \notag\\
	=& (\bm\sigma(\bm u), \bm \varepsilon(\bm v_h)) - \iota^2 (\Delta \bm \sigma(\bm  u), \bm \varepsilon(\bm v_h)) - a_h(\bm u, \bm v_h) - b_h(\bm v_h ,p) \notag\\
	=&- \iota^2 \sum\limits_{K\in \mathcal{T}_h}\int_{\partial K} \partial_{\bm n}\bm \sigma(\bm u): \bm\varepsilon
	(\bm v_h) {\rm d}S %=- \iota^2 \sum\limits_{F\in \mathcal F(\mathcal T_h)}\int_{F}\partial_{\bm n}\bm\sigma(\bm u):\llbracket\bm\varepsilon
	%(\bm v_h) \rrbracket{\rm d}S
.  \notag%\label{eq:Ehequility}
	\end{align}
	Let $Q_F^0$ be the standard $L^2$-projection from $L^2(F)$  to $\mathbb P_{0}(F)$, whose tensorial version is also denoted by $Q_F^0$.
	By the weak continuity \eqref{eq:weakcontinuity},
	\begin{align*}
	&(\bm f,\bm v_h) - a_h(\bm u, \bm v_h) - b_h(\bm v_h ,p)\\
	=&- \iota^2\sum_{K\in\mathcal T_h}\sum_{F\in \mathcal F(K)}\int_{F} (\partial_{\bm n}\bm\sigma(\bm u) - Q_F^0\partial_{\bm n}\bm\sigma(\bm u)):(\bm\varepsilon
	(\bm v_h) - Q_F^0\bm\varepsilon
	(\bm v_h)){\rm d} S.
	\end{align*}

	From the error estimate of $Q_F^0$ and the trace inequality, we acquire
	\[
	(\bm f,\bm v_h) - a_h(\bm u, \bm v_h) - b_h(\bm v_h ,p)\lesssim \iota^2h(|\bm u|_3+\lambda|\div\bm u|_2)|\bm v_h|_{2,h},
	\]
	\[
	(\bm f,\bm v_h) - a_h(\bm u, \bm v_h) - b_h(\bm v_h ,p)\lesssim \iota^2h^{1/2}(|\bm u|_2+\lambda|\div\bm u|_1)^{1/2}(|\bm u|_3+\lambda|\div\bm u|_2)^{1/2}|\bm v_h|_{2,h}.
	\]
	Hence
	\[
	E_h\lesssim \iota h(|\bm u|_3+\lambda|\div\bm u|_2),\quad E_h\lesssim \iota h^{1/2}(|\bm u|_2+\lambda|\div\bm u|_1)^{1/2}(|\bm u|_3+\lambda|\div\bm u|_2)^{1/2}.
	\]
	Therefore, we finish the proof by using \eqref{eq:straingradregularity1} and \eqref{eq:straingradregularity2}.
\end{proof}

\begin{theorem}\label{th:error}
	Let $(\bm u,p)\in (V\cap H^3(\Omega;\mathbb{R}^d)) \times (Q\cap H^2(\Omega))$ and $(\bm u_h,p_h)\in V_h\times Q_h$ be the solution of problem \eqref{straingradLameproblem} and \eqref{eq:discretestraingradmixedform}, respectively. %If $\bm f \in H^{-1}(\Omega; \mathbb{R}^2)$,
	We have
	\begin{equation}\label{eq:errororder}
	\|\bm u - \bm u_h\|_{V,h} + \|p - p_h\|_{Q} \lesssim
	%\begin{cases}
	(h^2 + \iota h)|\bm u|_{3} + \lambda (h^2 + \iota h)|\div \bm u|_{2}.
	%\end{cases}
	\end{equation}
\end{theorem}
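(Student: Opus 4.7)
The plan is to apply the abstract error estimate in Lemma~\ref{lm:errorup} and then control each of the three terms (the two best-approximation errors and the consistency term $E_h$) by the right-hand side of \eqref{eq:errororder}. The consistency term is already handled by \eqref{eq:Ehestimate1}, so the work reduces to choosing good interpolants for $\bm u$ and $p$ and converting norms of $p$ into $\lambda$-weighted norms of $\div\bm u$.

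First, I would take $\bm u_I = I_h \bm u \in V_h$. Applying \eqref{eq:Ihestimate3} elementwise with $i=1$ gives $|\bm u - I_h\bm u|_{1,K}\lesssim h_K^{2}\sum_{\delta\in\mathcal V(K)}|\bm u|_{3,\omega_\delta}$, and with $i=2$ gives $|\bm u - I_h\bm u|_{2,K}\lesssim h_K\sum_{\delta\in\mathcal V(K)}|\bm u|_{3,\omega_\delta}$. Summing over $K\in\mathcal T_h$ and using finite overlap of the patches $\omega_\delta$ yields
\[
\|\bm u - I_h\bm u\|_{V,h}^2 = |\bm u - I_h\bm u|_1^2 + \iota^2|\bm u - I_h\bm u|_{2,h}^2 \lesssim (h^2 + \iota h)^2 |\bm u|_3^2.
\]

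Next, I would take $p_I = J_h p \in Q_h$ (recall $J_h p \in Q_h$ since the correction $\mathcal{Q}_h$ restores the zero-mean condition). Using \eqref{eq:Jhestimate} with $j=2$ produces $\|p - J_h p\|_0 \lesssim h^2 |p|_2$ and $|p - J_h p|_1 \lesssim h |p|_2$, so that $\|p - J_h p\|_Q \lesssim (h^2 + \iota h)|p|_2$. The key observation is that the second equation of \eqref{straingradLameproblem} gives $p = \lambda\,\div\bm u$, and hence $|p|_2 = \lambda |\div\bm u|_2$. This turns the pressure approximation bound into $\lambda(h^2+\iota h)|\div\bm u|_2$, matching the desired dependence on $\lambda$.

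Finally, combining these two approximation bounds with the consistency estimate \eqref{eq:Ehestimate1} for $E_h$ and inserting into Lemma~\ref{lm:errorup} yields
\[
\|\bm u - \bm u_h\|_{V,h} + \|p - p_h\|_{Q} \lesssim (h^2+\iota h)|\bm u|_3 + \lambda(h^2+\iota h)|\div\bm u|_2 + \iota h\bigl(|\bm u|_3 + \lambda|\div\bm u|_2\bigr),
\]
and absorbing the last group into the first two completes the proof. I do not expect any serious obstacle here, since all of the technical work—the discrete inf-sup condition, the weak-continuity-based consistency analysis, and the interpolation estimates—has already been carried out in the preceding lemmas; the only non-mechanical step is the identification $p = \lambda\,\div\bm u$, which is what allows the bound for $\|p - J_h p\|_Q$ to be expressed in terms of $\lambda|\div\bm u|_2$ so that the final estimate is stated purely in terms of the regularity of $\bm u$ and $\div\bm u$.
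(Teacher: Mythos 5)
Your proposal is correct and follows essentially the same route as the paper: choose $\bm u_I = I_h\bm u$ and $p_I = J_h p$ in Lemma~\ref{lm:errorup}, bound the approximation errors via \eqref{eq:Ihestimate3} and \eqref{eq:Jhestimate} (using $p=\lambda\div\bm u$ to convert $|p|_2$ into $\lambda|\div\bm u|_2$), and absorb $E_h$ using \eqref{eq:Ehestimate1}. No issues.
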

\begin{proof}
	Let $\bm u_I = I_h \bm u\in V_h$, and $p_I=J_hp\in Q_h$ in Lemma \ref{lm:errorup}. Then by \eqref{eq:Ihestimate3} and \eqref{eq:Jhestimate},
	\begin{equation*}
	\|\bm u - \bm u_h\|_{V,h} + \|p- p_h\|_{Q}
	% \leq \| \bm u - \bm I_h u \|_{V,h} + \|p - \Pi_h p\|_{Q} + E_h
	\lesssim (h^2 + \iota h )|\bm u|_{3} +\lambda(h^2 + \iota h)|\div \bm u|_{2} + E_h.
	\end{equation*}
	Therefore \eqref{eq:errororder} follows from \eqref{eq:Ehestimate1}.
\end{proof}

\begin{theorem}\label{tm:errorf}
With the same assumption of Theorem \ref{th:error}, we have
\begin{align}
\label{eq:errorf}
\|\bm u - \bm u_h\|_{V,h} +\|p - p_h\|_{Q}&\lesssim h^{1/2}\|\bm f\|_{0},
\\
\label{eq:errorfu0}
\|\bm u_0 - \bm u_h\|_{V,h} +\|p_0 - p_h\|_{Q}&\lesssim (\iota^{1/2}+h^{1/2})\|\bm f\|_{0}.
\end{align}
\end{theorem}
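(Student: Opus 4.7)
The plan is to split the theorem into its two estimates and handle each by combining the abstract framework of Lemma~\ref{lm:errorup} with the robust interpolation bounds and regularity results established earlier.

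For \eqref{eq:errorf}, I would apply Lemma~\ref{lm:errorup} with the specific choices $\bm u_I=I_h\bm u\in V_h$ and $p_I=J_hp\in Q_h$. The interpolation estimate \eqref{eq:Ihu} immediately gives
\[
\|\bm u-I_h\bm u\|_{V,h}=\bigl(|\bm u-I_h\bm u|_1^2+\iota^2|\bm u-I_h\bm u|_{2,h}^2\bigr)^{1/2}\lesssim h^{1/2}\|\bm f\|_0,
\]
and \eqref{eq:Jhp} gives the analogous bound $\|p-J_hp\|_Q\lesssim h^{1/2}\|\bm f\|_0$. The consistency term $E_h$ is already bounded by $h^{1/2}\|\bm f\|_0$ in \eqref{eq:Ehestimate2}. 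Summing these three contributions yields \eqref{eq:errorf}. This step is essentially routine once the regularity-based interpolation estimates are in place; no new ideas are required.

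For \eqref{eq:errorfu0}, I would use the triangle inequality
\[
\|\bm u_0-\bm u_h\|_{V,h}+\|p_0-p_h\|_Q\le \|\bm u-\bm u_0\|_{V,h}+\|\bm u-\bm u_h\|_{V,h}+\|p-p_0\|_Q+\|p-p_h\|_Q,
\]
so in view of \eqref{eq:errorf} it suffices to show $\|\bm u-\bm u_0\|_{V,h}+\|p-p_0\|_Q\lesssim \iota^{1/2}\|\bm f\|_0$. For the displacement part, Lemma~\ref{lm:uregularity} gives $|\bm u-\bm u_0|_1\lesssim \iota^{1/2}\|\bm f\|_0$ and $\iota\|\bm u\|_2\lesssim \iota^{1/2}\|\bm f\|_0$; combined with \eqref{eq:u0regularity} which yields $\iota|\bm u_0|_2\lesssim \iota\|\bm f\|_0\le \iota^{1/2}\|\bm f\|_0$ (recall $\iota\in(0,1)$), the triangle inequality gives $\iota|\bm u-\bm u_0|_2\lesssim \iota^{1/2}\|\bm f\|_0$, hence $\|\bm u-\bm u_0\|_{V,h}\lesssim \iota^{1/2}\|\bm f\|_0$. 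For the pressure part, I use the identities $p=\lambda\div\bm u$ and $p_0=\lambda\div\bm u_0$; then Theorem~\ref{tm:upregularity} directly controls $\|p-p_0\|_0=\lambda\|\div(\bm u-\bm u_0)\|_0\lesssim \iota^{1/2}\|\bm f\|_0$ and $\iota|p-p_0|_1\le \iota\lambda|\div\bm u|_1+\iota\lambda|\div\bm u_0|_1$, whose first term is bounded by \eqref{eq:straingradregularity2} and whose second term is bounded via \eqref{eq:u0regularity} by $\iota\|\bm f\|_0\le\iota^{1/2}\|\bm f\|_0$.

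The proof is therefore largely a reassembly of prior results; no new estimates need to be derived. The only subtle point, which I expect to be the main (though mild) obstacle, is to verify that every piece of $\|\bm u-\bm u_0\|_{V,h}$ and $\|p-p_0\|_Q$ can indeed be absorbed into $\iota^{1/2}\|\bm f\|_0$ by exploiting the assumption $\iota\in(0,1)$ to replace bare factors of $\iota$ by $\iota^{1/2}$ wherever the regularity estimate gives the weaker power $\iota$ rather than $\iota^{1/2}$.
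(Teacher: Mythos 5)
Your proposal is correct and follows essentially the same route as the paper: the first estimate is exactly Lemma~\ref{lm:errorup} combined with \eqref{eq:Ihu}, \eqref{eq:Jhp} and \eqref{eq:Ehestimate2}, and the second is the triangle inequality together with \eqref{eq:u0regularity}--\eqref{eq:straingradregularity1} and \eqref{eq:straingradregularity2}, using $\iota\le\iota^{1/2}$ for $\iota\in(0,1)$. The paper states this in one line; you have simply spelled out the same steps in full detail.
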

\begin{proof}
	The estimate \eqref{eq:errorf} follows from Lemma~\ref{lm:errorup}, \eqref{eq:Ihu}, \eqref{eq:Jhp} and \eqref{eq:Ehestimate2}. And estimate \eqref{eq:errorfu0} holds from \eqref{eq:errorf}, \eqref{eq:u0regularity}-\eqref{eq:straingradregularity1} and \eqref{eq:straingradregularity2}.
\end{proof}

\section{Numerical experiments}\label{Sect:test}

In this section, we present two numerical examples using the mixed FEM constructed in Sect \ref{Sect:H1ncFEM} toward testifying its uniform convergence and robustness with respect to $\lambda$ and $\iota$ on a uniform triangulation.

%\begin{figure}[t]
%	\centering
%	\subfigure[2d:$h$=1/8]
%	{\includegraphics[scale=0.40]{figure/mesh2d.jpg}}
%	\subfigure[3d:$h$=1]
%	{\includegraphics[scale=0.30]{figure/mesh3d.jpg}}
%	\caption{
%		Illustration of two initial meshes in two/three dimensions.
%	}
%	\label{Meshes}
%\end{figure}

 To make the proposed method more accessible, we first present the mixed finite element spaces in two dimensions in detail. Let $\mathcal T_h$ be a shape regular triangulation.
The finite element spaces in two dimensions are
\begin{align*}
V_h=&\{\bm v_h\in H_0^1(\Omega;\mathbb R^2)| \, \bm v_h|_K\in V(K) \textrm{ for each } K\in\mathcal T_h, \textrm{ all the  DoFs \eqref{Ddof1}-\eqref{Ddof42}} \\
&\qquad\qquad\qquad \textrm{ are single-valued}, \textrm{and DoFs \eqref{Ddof1}-\eqref{Ddof31} on boundary vanish}\}, \\
Q_h =&\{q_h\in H^1_0(\Omega)\cap L^2_0(\Omega) \,|\, q_h|_K\in\mathbb P_1(K) \textrm{ for each } K\in\mathcal T_h\},
\end{align*}
where $V(K)=\mathbb P_{2}(K;\mathbb R^2)+b_K\mathbb P_{1}(K;\mathbb R^2)+b_K^2\mathbb P_{0}(K;\mathbb R^2)$.
The associated DoFs for $V_h$ are given as
\begin{align}
\bm v_h (\delta) & \quad\forall~\delta\in \mathcal V^i(\mathcal T_h),\label{Ddof1}\\
\bm v_h (m_e)& \quad \forall~  e\in \mathcal E^i(\mathcal T_h), \label{Ddof2}\\
\frac{1}{|e|}\int_e\partial_{\bm n}\bm v_h\,{\rm d}s & \quad \forall~  e\in \mathcal E^i(\mathcal T_h),  \label{Ddof31}\\
\frac{1}{|K|}\int_K \bm v_h\,{\rm d} x & \quad \forall~ K\in \mathcal T_h, \label{Ddof42}
\end{align}
where $m_e$ is the midpoint of $e$.
The DoFs for $Q_h$ are all the function values at interior vertices.
Note that the DoFs \eqref{Ddof1}-\eqref{Ddof31} on boundary vanish. The other point to be emphasized is that we here substitute DoFs \eqref{H2lowestncfemnDdof31}-\eqref{H2lowestncfemnDdof32} by the moments of the normal derivatives \eqref{Ddof31} owing to \eqref{eq:divdecomp} in order to simplify computation. Also, we use the function values of midpoints on edges \eqref{Ddof2} instead of \eqref{H2lowestncfemnDdof2}. With such substitution, the global finite element space remains the same.

%
%For $\nu = 0.3$, we can see that it is second order convergence for $E_u$ and $E_p$ while $\iota$ tends to zero. Otherwise it's linear convergence. For $\nu = 0.4999$, $\lambda$ is large. %and the problem \eqref{eq:straingradproblem} is an almost incompressible problem.
%$E_u$ and $E_p$ also have second order convergence rate when $\iota\ll h$, which is consistent with the theoretical results in Theorem \ref{th:error}.
%%Although the relative errors are considerably larger than ones for $\nu = 0.3$, the data at lower half of Table \ref{tab: example1} shows that the error of displacement has optimal order convergence when $\iota$ approaches zero.
%Moreover, the convergence order has a tendency toward stability when $\iota$ tends to zero, which implies the robustness with respect to $\iota$ and $\lambda$.

%\begin{figure}[t]
%	\centering
%	\subfigure[Example 1] {\includegraphics[scale=0.20]{figure/erroru.jpg}}
%	\subfigure[Example 2] {\includegraphics[scale=0.20]{figure/erroruEx2.jpg}}
%	\caption{
%		Convergence rate for $\lambda = 1e-08$.
%	}
%\end{figure}
\begin{figure}[t]
	\centering
\includegraphics[scale=0.5]{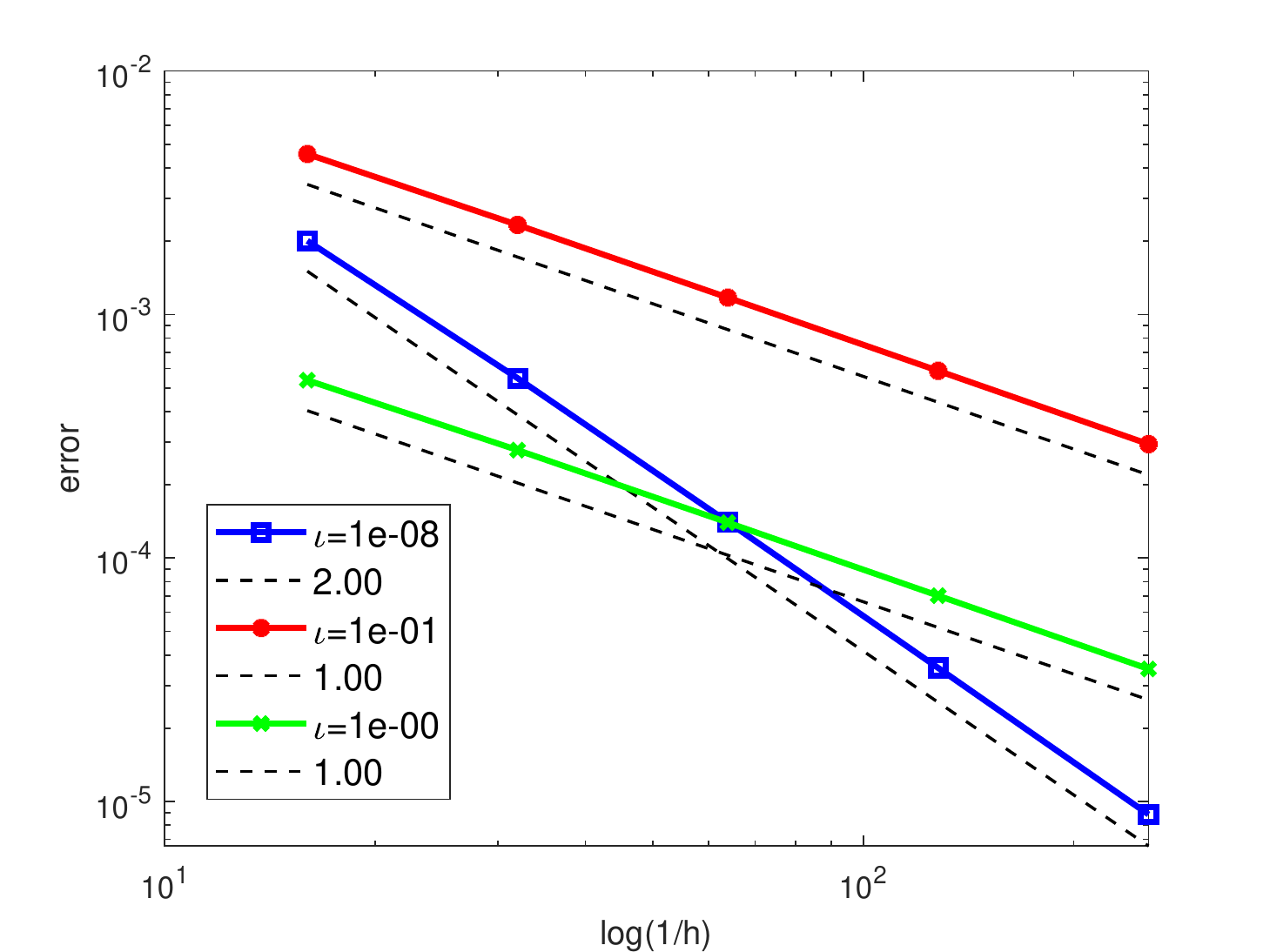}
	\caption{
		Convergence rate for $\lambda = 1e +08$.
	}
	\label{TableEx1}
\end{figure}

\begin{example}\label{example1}\rm
	%This example to testify two parameter robustness with respect to $\lambda$ and $\iota$.
Let $\Omega = (0,1)^2$. We choose the right side function $\bm f$ such that the exact solution of SGE model \eqref{eq:straingradproblem} is
\begin{align*}
     \bm u=&
	\begin{bmatrix}
	 3(e^{\cos(2\pi x_1)}-e)^2\sin(2\pi x_2)\sin(\pi x_2)\\
      8(e^{2\cos(2\pi x_1)}-e^{1+\cos(2\pi x_1)})\sin(2\pi x_1)\sin^3(\pi x_2)
	\end{bmatrix}.
	\end{align*}
 Moreover, set $\mu=1$. It is easy to check that $\bm u$ is divergence-free without boundary layer, and hence $\bm f$ is independent of the Lam\'e constant $\lambda$. The purpose of this example is to demonstrate the error estimate given in Theorem \ref{th:error}.
\end{example}

%in Theorem \ref{th:error}given in the convergence order of the numerical solution from the mixed element method \eqref{eq:discretestraingradmixedform}.
% That is to say, the numerical solution $\bm u_h$ obtained by the mixed finite element method satisfying $ \partial_{\bm n}\bm u_h|_{\partial \Omega}=0$, is inconsistent with $\partial_{\bm n}\bm u$ on boundary as $\iota$ approaches 0. Then the error would deteriorate and the best one can hope for is sharp half order convergence rate due to \eqref{eq:errorf}.

We measure the relative error by
      \[
       E_u \coloneqq\frac {\|\bm u-\bm u_h\|_{V,h}} {\|\bm f\|_{0}}.
       %\quad E_p \coloneqq\frac {\|\bm p-\bm p_h\|_{Q}} {\|\bm f\|_{0}}
      \]
   The numerical results of $E_u$ are displayed in Table~\ref{tab:Eu} with different values of Lam\'e constant $\lambda$, size parameter $\iota$ and mesh size $h$. We may observe from Table \ref{tab:Eu} that
   when $\iota$ takes the value $1$ and $1e-01$, the relative error is linearly convergent and when $\iota=1e-08$, it is quadratically convergent. In particular, Fig. \ref{TableEx1} performs the relative error curves for different $\iota$ with fixed $\lambda=1e+08$. In this situation, if $\iota=1e-08$, $E_u$ behaves like $O(h^{2})$. If $\iota=1e-01$ or $1e-00$, $E_u=O(h)$. Hence, we may conclude that the convergence order of the error is quadratic as $\iota \ll h$. All these results are in good agreement with the error bound given in Theorem \ref{th:error}.

\begin{table}[t]%\color{red}
	\scriptsize
	\centering
	\caption{The performance of Example \ref{example1}.}
	%\begin{threeparttable}
	\begin{tabular}{ccccccccccc}
		\toprule
		\multirow{2}{*}{$\lambda$}&	\multirow{2}{*}{$\iota$}  & \multicolumn{5}{c}{$h$} &\multirow{2}{*}{rate}  \\
		\cline{3-7}
		\multirow{2}{*}{} &\multirow{2}{*}{}& 1/16 & 1/32  & 1/64 & 1/128 & 1/256  \\
		\midrule
		\multirow{3}{*}{1e+00 }
        &$ 1e-00$& 5.375e-04 & 2.776e-04 & 1.399e-04 & 7.010e-05 & 3.507e-05 & 1.00 \\
        &$ 1e-01$& 4.561e-03 & 2.334e-03 & 1.173e-03 & 5.874e-04 & 2.938e-04 & 1.00 \\
   %     &$ 1e-06$& 4.561e-03 & 2.334e-03 & 1.173e-03 & 5.874e-04 & 2.938e-04 & \\
        &$1e-08$ & 2.008e-03 & 5.477e-04 & 1.407e-04 & 3.540e-05 & 8.862e-06 & 2.00   \\
		\midrule
		\multirow{3}{*}{1e+04}
        &$ 1e-00$& 5.375e-04 & 2.776e-04 & 1.399e-04 & 7.010e-05 & 3.507e-05 & 1.00 \\
        &$ 1e-01$& 4.562e-03 & 2.334e-03 & 1.173e-03 & 5.874e-04 & 2.938e-04 & 1.00 \\
        %&$ 1e-06$& 4.562e-03 & 2.334e-03 & 1.173e-03 & 5.874e-04 & 2.938e-04 &  \\
        &$1e-08$ & 2.009e-03 & 5.477e-04 & 1.407e-04 & 3.540e-05 & 8.862e-06 & 2.00  \\
		\midrule
		\multirow{3}{*}{1e+08}	
        &$ 1e-00$& 5.375e-04 & 2.776e-04 & 1.399e-04 & 7.010e-05 & 3.507e-05 & 1.00 \\
        &$ 1e-01$& 4.562e-03 & 2.334e-03 & 1.173e-03 & 5.874e-04 & 2.938e-04 & 1.00 \\
       % &$ 1e-06$& 4.562e-03 & 2.334e-03 & 1.173e-03 & 5.874e-04 & 2.938e-04 &   \\
        &$1e-08$ & 2.009e-03 & 5.477e-04 & 1.407e-04 & 3.540e-05 & 8.862e-06 & 2.00   \\
		\bottomrule
	\end{tabular}
	\label{tab:Eu}
\end{table}
\begin{table}[t]%\color{red}
	\scriptsize
	\centering
	\caption{The performance of Example \ref{example_div-free}.}
	%\begin{threeparttable}
	\begin{tabular}{ccccccccccc}
		\toprule
		\multirow{2}{*}{$\lambda$}&	\multirow{2}{*}{$\iota$}  & \multicolumn{5}{c}{$h$}  &\multirow{2}{*}{rate} \\
		\cline{3-7}
	 &\multirow{2}{*}{} & 1/16  & 1/32 & 1/64 & 1/128 & 1/256 \\
		\midrule
		\multirow{3}{*}{1e+00 }
        &$1e-04$ & 2.053e-02 & 1.447e-02 & 1.025e-02 & 7.340e-03 & 5.438e-03 & 0.43 \\
		&$1e-06$ & 2.052e-02 & 1.445e-02 & 1.020e-02 & 7.206e-03 & 5.093e-03 & 0.50 \\
        &$ 1e-08$& 2.052e-02 & 1.445e-02 & 1.020e-02 & 7.206e-03 &5.093e-03  & 0.50\\
		\midrule
		\multirow{3}{*}{1e+04}
        &$1e-04$ & 2.054e-02 & 1.447e-02 & 1.025e-02 & 7.341e-03 & 5.438e-03 & 0.43\\
		&$1e-06$ & 2.053e-02 & 1.446e-02 & 1.020e-02 & 7.207e-03 & 5.094e-03 & 0.50 \\
        &$ 1e-08$& 2.053e-02 & 1.446e-02 & 1.020e-02 & 7.207e-03 & 5.094e-03 & 0.50 \\
		\midrule
		\multirow{3}{*}{1e+08}	
        &$1e-04$ & 2.054e-02 & 1.447e-02 & 1.025e-02 & 7.341e-03 & 5.438e-03 & 0.43\\
		&$1e-06$ & 2.053e-02 & 1.446e-02 & 1.020e-02 & 7.207e-03 & 5.094e-03 & 0.50 \\
        &$ 1e-08$& 2.053e-02 & 1.446e-02 & 1.020e-02 & 7.207e-03 & 5.094e-03 & 0.50\\
		\bottomrule
	\end{tabular}
	\label{tab:divfreeEu0}
\end{table}
\begin{example}\label{example_div-free}\rm
	%This example to testify two parameter robustness with respect to $\lambda$ and $\iota$.
	Let $\Omega = (0,1)^2$. The exact solution of the reduced problem \eqref{eq:elasticity} is set to be a divergence-free function in the form
%	\begin{align*}
%	\bm u_0=&
%	\begin{bmatrix}
%	-\sin^2(\pi x)\sin(2\pi y) \\
%	\sin(2\pi x)\sin^2(\pi y)
%	\end{bmatrix}.
%	\end{align*}
    \begin{align*}
	\bm u_0=&
	\begin{bmatrix}
	-x_1^2(1-x_1)^2 x_2(1-x_2)(1-2x_2)\\
	x_1(1-x_1)(1-2x_1)x_2^2(1-x_2)^2
	\end{bmatrix}.
	\end{align*}
The right side term $\bm f$ computed from \eqref{eq:elasticity} is independent of both $\lambda$ and $\iota$. We use this $\bm f$ as the right side function of problem \eqref{eq:straingradproblem}. Since $\partial_{\bm n}\bm u_0|_{\partial \Omega}\not=\bm 0$, the function $\bm u$ has a strong boundary layer when $\iota$ is sufficient small. We still set $\mu=1$. We focus on investigating the robustness of our numerical method with respect to both $\lambda$ and $\iota$ in this example.

When $\iota \ll h$, it follows from  \eqref{eq:u0regularity}-\eqref{eq:straingradregularity1} and \eqref{eq:straingradregularity2} that
 \[
 \|\bm u-\bm u_0\|_{V}\lesssim \iota^{1/2}\|\bm f\|_0
                      \lesssim h^{1/2}\|\bm f\|_0,
 \]
 which combined with \eqref{eq:errorfu0} immediately implies \eqref{eq:errorf}. So we turn to verify the estimate \eqref{eq:errorfu0}, which depends on $\bm u_0$ instead of the unknown function $\bm u$.  Let $E_{u_0}=\frac{\|\bm u_h-\bm u_0\|_{V,h}}{\|\bm f\|_0}$. We compute its values for different values of Lam\'e constant $\lambda$ and the mesh size $h$ in Table~\ref{tab:divfreeEu0}. We may observe that $E_{u_0}=O(h^{0.43})$ with $\iota=1e-04$ and $E_{u_0}=O(h^{1/2})$ with $\iota=1e-06$ or $1e-08$ no matter what value $\lambda$ takes. We can see that the best convergence order is really $1/2$ when $\iota\ll h$, which is consistent with the estimate \eqref{eq:errorfu0} and shows the robustness of the mixed finite element method with respect to both the size parameter $\iota$ and Lam\'{e} coefficient $\lambda$.
\end{example}

% In summary, we can draw the following conclusions from the above numerical results:
\section*{Acknowledgments}
The second author would like to thank Prof. M. Dauge from Universit\'{e} de Rennes 1 for helpful discussion about elliptic boundary value problems in domains with corners and bringing his attention to the monograph \cite{KozlovMazyaRossmann2001spectral}. The authors are also indebted to the referees for valuable comments which improved an earlier version of the paper.

\bibliographystyle{abbrv}
\bibliography{./SGE-mixed}
\end{document}